\documentclass[reqno]{amsart}
\usepackage{amsthm,amsmath,amssymb}
\usepackage{stmaryrd,mathrsfs}
\usepackage{esint}
\usepackage{tikz}
\usepackage{amsmath}

\usetikzlibrary{patterns,arrows,decorations.pathreplacing}

\allowdisplaybreaks


%
%









\swapnumbers \numberwithin{equation}{section}

\theoremstyle{plain}
\newtheorem{theorem}[equation]{Theorem}
\newtheorem{proposition}[equation]{Proposition}
\newtheorem{corollary}[equation]{Corollary}
\newtheorem{lemma}[equation]{Lemma}

\theoremstyle{definition}
\newtheorem{definition}[equation]{Definition}

\theoremstyle{remark}
\newtheorem{remark}[equation]{Remark}

\makeatletter
\@namedef{subjclassname@2010}{%
  \textup{2010} Mathematics Subject Classification}
\makeatother

%
%

\begin{document}

\title{Nontrivial examples of $JN_p$ and $VJN_p$ functions}

\author[T. Takala]{Timo Takala}

\address{(T.T.) Aalto University, Department of Mathematics and Systems analysis, Espoo, Finland}
\email{timo.i.takala@aalto.fi}


\date{June 30, 2022}

\keywords{John–Nirenberg space, vanishing subspace, Euclidian space, cube, bounded mean oscillation, John-Nirenberg inequality.}
\subjclass[2020]{42B35, 46E30, 46E35}


\maketitle

\begin{abstract}
We study the John-Nirenberg space $JN_p$, which is a generalization of the space of bounded mean oscillation. In this paper we construct new $JN_p$ functions, that increase the understanding of this function space. It is already known that $L^p(Q_0) \subsetneq JN_p(Q_0) \subsetneq L^{p,\infty}(Q_0)$.
We show that if $|f|^{1/p} \in JN_p(Q_0)$, then $|f|^{1/q} \in JN_q(Q_0)$, where $q \geq p$,
but there exists a nonnegative function $f$ such that $f^{1/p} \notin JN_p(Q_0)$ even though $f^{1/q} \in JN_q(Q_0)$, for every $q \in (p,\infty)$.
We present functions in $JN_p(Q_0) \setminus VJN_p(Q_0)$ and in $VJN_p(Q_0) \setminus L^p(Q_0)$, proving the nontriviality of the vanishing subspace $VJN_p$, which is a $JN_p$ space version of $VMO$. We prove the embedding $JN_p(\mathbb{R}^n) \subset L^{p,\infty}(\mathbb{R}^n)/\mathbb{R}$.
Finally we show that we can extend the constructed functions into $\mathbb{R}^n$, such that we get a function in $JN_p(\mathbb{R}^n) \setminus VJN_p(\mathbb{R}^n)$ and another in $CJN_p(\mathbb{R}^n) \setminus L^p(\mathbb{R}^n)/\mathbb{R}$. Here $CJN_p$ is a subspace of $JN_p$ that is inspired by the space $CMO$.
\end{abstract}

\section{Introduction}

In their seminal paper \cite{johnjanirenberg} John and Nirenberg studied the famous space of bounded mean oscillation $BMO$ and proved the profound John-Nirenberg inequality for $BMO$ functions. In the same paper they also defined a generalization of $BMO$ which has since become known as the John-Nirenberg space, or $JN_p$, with parameter $p \in (1,\infty)$. The space $JN_p$ is a generalization of $BMO$ in the sense that the $BMO$ norm of a function is the limit of its $JN_p$ norm when $p$ tends to infinity.

In this paper we define $JN_p$ as in Definition \ref{jnpmaaritelma} below: for a bounded cube $Q_0 \subset \mathbb{R}^n$ and a number $p \in (1,\infty)$, a function $f$ is in $JN_p(Q_0)$ if $f \in L^1(Q_0)$ and
\begin{equation*}
\| f \|_{JN_p(Q_0)}^p
:= \sup \sum_{i=1}^{\infty} |Q_i| \left( \fint_{Q_i} | f - f_{Q_i} | \right)^p < \infty,
\end{equation*}
where the supremum is taken over all countable collections of pairwise disjoint cubes $(Q_i)_{i=1}^{\infty}$ that are contained in $Q_0$. Here $f_{Q_i}$ denotes the integral average of $f$ over $Q_i$:
\begin{equation*}
f_{Q_i}
:= \fint_{Q_i} f
:= \frac{1}{|Q_i|} \int_{Q_i} f.
\end{equation*}

Many other definitions have been used for $JN_p$, some of which are not equivalent with this definition. This is because the space depends on how much we let the sets $Q_i$ overlap. For example many of the definitions in the more general metric measure space, such as in \cite{doublingmeasure,localtoglobal,localtoglobal2,kim}, use balls $B_i$ instead of cubes, and these balls may overlap with each other in some definitions. This results in different spaces.

Other related function spaces include the dyadic $JN_p$ \cite{kim2}, the John-Nirenberg-Campanato spaces \cite{campanato}, their localized versions \cite{localized} and the sparse $JN_p$ \cite{sparse}. For an extensive survey of John-Nirenberg type spaces see \cite{survey}.

It is well-known that $L^p(Q_0) \subset JN_p(Q_0)$. The space $JN_p$ is also embedded in the weak $L^p$ space, $JN_p(Q_0) \subset L^{p,\infty}(Q_0)$. Further both of these inclusions are strict. An example of a function in $JN_p \setminus L^p$ was constructed in a recent paper \cite{nontriviality}. Thus the space $JN_p$ is a nontrivial space between $L^p$ and $L^{p,\infty}$. Other than that, the behaviour of $JN_p$ functions is still very much unknown.

One of the properties with $L^p$ spaces is that $|f|^{1/q} \in L^q$ if and only if $|f|^{1/p} \in L^p$ whenever $p , q \in [1,\infty)$. A similar equivalence holds for weak $L^p$ spaces. Since $JN_p$ is a space between these spaces, this raises the question: does the equivalence hold for $JN_p$ spaces? In Section \ref{pääosio} we give a negative answer to this question by constructing an example of a nonnegative function $f$ such that $f^{1/p} \notin JN_p(Q_0)$ even though $f^{1/q} \in JN_q(Q_0)$ for every $q \in (p,\infty)$.
The previous existing example of a $JN_p$ function does not have this property of distinguishing $JN_p$ spaces with different values of $p$.
We also show that if $|f|^{1/p} \in JN_p$, then $|f|^{1/q} \in JN_q$  for every $q \geq p$.

Additionally we study the vanishing subspace of $JN_p$, which is denoted by $VJN_p$. This subspace has been studied by Brudnyi and Brudnyi \cite{brudnyi} and by Tao et al. \cite{vjnplahde}.
The space $VJN_p$ is defined as a John-Nirenberg space counterpart to the famous space of vanishing mean oscillation, $VMO$, which is a subspace of $BMO$, and was first studied by Sarason \cite{vmolahde}.
It is well-known that $L^p(Q_0) \subset VJN_p(Q_0) \subset JN_p(Q_0)$. Tao et al. proved that $L^p(Q_0) \neq VJN_p(Q_0)$ 
by showing that the double dual space of $VJN_p$ is $JN_p$ 
\cite{vjnplahde}. 
In Section \ref{vjnposio} we present examples of functions in $JN_p \setminus VJN_p$ and in $VJN_p \setminus L^p$, thereby proving that $VJN_p(Q_0) \neq JN_p(Q_0)$.
Both of these functions are based on the same type of fractal construction.

Finally in Section \ref{rn} we study $JN_p$ and $VJN_p$ in $\mathbb{R}^n$ instead of a bounded cube.
The embedding $JN_p \subset L^{p,\infty}$ has been proved in many different ways for a bounded domain - originally in \cite{johnjanirenberg} and further discussion can be found in \cite{doublingmeasure,goodlambda,localtoglobal,localtoglobal2,milman,campanato}.
However, as far as we know, the question of whether this holds for unbounded domains has not been addressed in the literature. For the sake of completeness we prove that this embedding indeed holds in the case of the whole space $\mathbb{R}^n$, if we replace $L^{p,\infty}$ with $L^{p,\infty} / \mathbb{R}$ - the space of functions in $L^{p,\infty}$ modulo constant.

We also consider the space $CJN_p$: a subspace of $JN_p$, that has been studied by Tao et al. \cite{vjnplahde}.
The space $CJN_p$ is defined analogously to the space of continuous mean oscillation, $CMO$, which is a subspace of $BMO$, and was first announced by Neri \cite{cmolahde}.
We study $CJN_p$ only on $\mathbb{R}^n$ as on bounded domains it coincides with $VJN_p$.
It is clear from the definitions that $L^p / \mathbb{R} \subset CJN_p \subset VJN_p \subset JN_p$.
By extending the functions in Section \ref{vjnposio} into the whole space $\mathbb{R}^n$ we get functions that are in $JN_p(\mathbb{R}^n) \setminus VJN_p(\mathbb{R}^n)$ and in $CJN_p(\mathbb{R}^n) \setminus L^p(\mathbb{R}^n) / \mathbb{R}$, proving that the respective inclusions are strict.
This answers \cite[Question 5.8]{vjnplahde} and it partially answers \cite[Question 5.6]{vjnplahde} and \cite[Question 17]{survey}.

Throughout this paper we assume that a function has its domain in the Euclidian space $\mathbb{R}^n$ or in a bounded cube $Q_0 \subset \mathbb{R}^n$. Every cube in $\mathbb{R}^n$ is assumed to have edges parallel to the coordinate axes.

\section{Preliminaries}
\label{preliminaries}

\begin{definition}[$JN_p$]
\label{jnpmaaritelma}
Let $Q_0 \subset \mathbb{R}^n$ be a bounded cube and let $1 \leq p < \infty$. A function $f$ is in $JN_p(Q_0)$ if $f \in L^1(Q_0)$ and there is a constant $K < \infty$ such that
\begin{equation*}
\sum_{i=1}^{\infty} |Q_i| \left( \fint_{Q_i} | f - f_{Q_i} | \right)^p \leq K^p
\end{equation*}
for all countable collections of pairwise disjoint cubes $(Q_i)_{i=1}^{\infty}$ in $Q_0$.
We denote the smallest such number $K$ by $\|f\|_{JN_p}$.
\end{definition}

\begin{remark}
\label{infversio}
$JN_p$ can also be defined with infimums instead of integral averages.
This definition is equivalent with Definition \ref{jnpmaaritelma}, because
\begin{equation*}
\inf_{c_i} \fint_{Q_i} | f - c_i |
\leq \fint_{Q_i} | f - f_{Q_i} |
\leq 2 \inf_{c_i} \fint_{Q_i} | f - c_i |.
\end{equation*}
The infimum approach makes calculations sometimes much more simple.
\end{remark}

One can also define $JN_p$ by using medians which allows us to not use integrals at all. This approach is studied by Myyryläinen \cite{kim}. 
We do not consider $JN_p$ with $p = 1$, because clearly
$JN_1(Q_0) = L^1(Q_0)$, see equation (\ref{hölder}) below. Thus we assume from now on that $p > 1$.

It is clear that if a function $f$ is in $L^p(Q_0)$, then it is also in $JN_p(Q_0)$. This follows from Hölder's inequality:
\begin{equation}
\label{hölder}
\sum_{i=1}^{\infty} |Q_i| \left( \fint_{Q_i} | f - f_{Q_i} | \right)^p
\leq \sum_{i=1}^{\infty} |Q_i| \fint_{Q_i} | f - f_{Q_i} |^p
\leq \sum_{i=1}^{\infty} 2^p \int_{Q_i} |f|^p
\end{equation}
for every partition $(Q_i)_{i=1}^{\infty}$ of $Q_0$ into disjoint cubes.
The inclusion $L^p \subset JN_p$ is strict, but functions in $JN_p \setminus L^p$ are very complicated. 
An example of a function in $JN_p \setminus L^p$ was constructed 
in a recent paper \cite{nontriviality}.

If a function is in $JN_p(Q_0)$, then it is also in the weak $L^p$-space $L^{p, \infty} (Q_0)$. This result is a $JN_p$ space counterpart of the famous John-Nirenberg lemma for $BMO$ functions.
\begin{theorem}
\label{embedding}
Let $1 < p < \infty$, $Q_0 \subset \mathbb{R}^n$ a bounded cube and $f \in JN_p(Q_0)$. Then $f \in L^{p,\infty}(Q_0)$ and
\begin{equation*}
\| f - f_{Q_0} \|_{L^{p,\infty}(Q_0)}
\leq c \| f \|_{JN_p(Q_0)}
\end{equation*}
with some constant $c = c(n,p)$.
\end{theorem}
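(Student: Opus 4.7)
The plan is to follow the classical Calderón–Zygmund stopping-time argument used to derive weak-$L^p$ estimates from $JN_p$ control, adapted from the proof of the John–Nirenberg lemma for $BMO$. Write $E_\lambda := \{x \in Q_0 : |f(x) - f_{Q_0}| > \lambda\}$; the goal is $\lambda^p |E_\lambda| \leq c(n,p)^p \|f\|_{JN_p(Q_0)}^p$ uniformly in $\lambda > 0$, which gives the embedding by the definition of the weak-$L^p$ quasinorm.

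The low-$\lambda$ regime $\lambda \leq c_1 \fint_{Q_0}|f - f_{Q_0}|$, with a constant $c_1 = c_1(n)$ to be chosen, is immediate: applying Definition \ref{jnpmaaritelma} to the singleton collection $(Q_0)$ gives $|Q_0|\bigl(\fint_{Q_0}|f-f_{Q_0}|\bigr)^p \leq \|f\|_{JN_p(Q_0)}^p$, and combining with $|E_\lambda| \leq |Q_0|$ produces $\lambda^p |E_\lambda| \leq c_1^p \|f\|_{JN_p(Q_0)}^p$.

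For $\lambda > c_1 \fint_{Q_0}|f - f_{Q_0}|$, I would perform a Calderón–Zygmund stopping-time decomposition of $f - f_{Q_0}$ inside $Q_0$ at a level $\mu$ of the form $\mu = \lambda/(L \cdot 2^n)$, with $L$ a large constant depending on $n$ and $p$. This yields pairwise disjoint maximal dyadic subcubes $Q_i \subset Q_0$ with $\mu < \fint_{Q_i}|f - f_{Q_0}| \leq 2^n \mu = \lambda/L$, with $|f - f_{Q_0}| \leq \mu$ a.e.\ outside $\bigcup_i Q_i$, and consequently $E_\lambda \subset \bigcup_i Q_i$ and $|f_{Q_i} - f_{Q_0}| \leq \lambda/L$. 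The triangle inequality then forces $|f(x) - f_{Q_i}| > \lambda(1 - 1/L)$ on $E_\lambda \cap Q_i$, reducing the original problem on $Q_0$ at level $\lambda$ to the same type of problem on each $Q_i$ at a slightly smaller level.

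Iterating this reduction inside each $Q_i$ produces a dyadic tree of stopping cubes whose leaves $\{Q_\alpha\}$ form a pairwise disjoint subfamily of $Q_0$, and the essential gain is that Definition \ref{jnpmaaritelma} applied to this disjoint leaf family yields the summed bound $\sum_\alpha |Q_\alpha|\bigl(\fint_{Q_\alpha}|f - f_{Q_\alpha}|\bigr)^p \leq \|f\|_{JN_p(Q_0)}^p$. The main obstacle is arranging the iteration and the final summation so that this $p$-th power summability of mean oscillations translates into $p$-th power decay of the level sets without logarithmic losses; this is precisely the point at which the classical proofs in \cite{johnjanirenberg,goodlambda,milman,campanato} diverge in their technical packaging, some using good-$\lambda$ inequalities and others using sparse or Carleson-type decompositions, but all reaching the same weak-$L^p$ estimate. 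Given that the result is well-established with multiple published proofs, one alternative is to simply invoke any of the references listed above.
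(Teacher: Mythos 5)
The paper does not actually prove Theorem \ref{embedding}: it states the result and refers to the literature, originally \cite[Lemma 3]{johnjanirenberg}, with later treatments in \cite{doublingmeasure,goodlambda,localtoglobal,localtoglobal2,milman,campanato}. Your closing fallback --- simply invoking one of these references --- is therefore exactly what the paper itself does, and on that reading your proposal is acceptable and matches the paper's approach. The paper even remarks that the constant $c=c(n,p)$ is independent of $Q_0$ although the cited proofs use the boundedness of $Q_0$, a point your sketch is consistent with since your decomposition is entirely internal to $Q_0$.

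If, however, the Calder\'on--Zygmund sketch is meant to stand on its own, it has a genuine gap, and you have located it yourself: the reduction from $Q_0$ at level $\lambda$ to the stopping cubes $Q_i$ at level $\lambda(1-1/L)$ is the easy part, while the ``main obstacle'' you defer --- iterating over generations and summing so that the disjoint-family bound $\sum_\alpha |Q_\alpha|\bigl(\fint_{Q_\alpha}|f-f_{Q_\alpha}|\bigr)^p \leq \|f\|_{JN_p(Q_0)}^p$ turns into $|E_\lambda| \leq c\,\lambda^{-p}\|f\|_{JN_p(Q_0)}^p$ --- is precisely the content of the John--Nirenberg argument in the $JN_p$ setting. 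Unlike the $BMO$ case, the $JN_p$ hypothesis gives no uniform smallness of $\fint_{Q_i}|f-f_{Q_i}|$ on individual stopping cubes, only $\ell^p$-type control over the whole disjoint family, so a naive iteration of your reduction does not by itself yield the $\lambda^{-p}$ decay and risks logarithmic losses; closing this requires either the careful induction on the distribution function as in \cite{johnjanirenberg}, the good-$\lambda$ inequality of \cite{goodlambda}, or the rearrangement/Garsia--Rodemich route of \cite{milman}. Since none of these is carried out, the sketch is an outline of the known strategy rather than a proof; either complete that step or present the theorem, as the paper does, as a cited result.
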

Conveniently the constant $c$ does not depend on $Q_0$ even though the proof does rely on the boundedness of $Q_0$.
This embedding was originally proved in \cite[Lemma 3]{johnjanirenberg}.
In \cite{localtoglobal} 
this was generalized from cubes into John domains.
In \cite{doublingmeasure} 
and \cite{localtoglobal2} 
the result was proven in a more general metric space with a doubling measure.
In \cite{campanato} 
a similar result was proven for John-Nirenberg-Campanato spaces, which are a generalization of John-Nirenberg spaces.
Milman gave a new characterization of the weak $L^p$ space as the Garsia-Rodemich space and proved the embedding of $JN_p(Q_0)$ in this space \cite{milman}.
Berkovits et al. proved a good-$\lambda$ inequality and used this to prove multiple embedding theorems including the embedding of $JN_p$ to the weak $L^p$-space \cite{goodlambda}.

The inclusion $JN_p(Q_0) \subset L^{p,\infty}(Q_0)$ is strict as well. Consider the counterexample $f(x) = x^{-1/p}$ in the one-dimensional case $n=1$ with $Q_0 = (0,1)$. In this case $f$ is in the weak $L^p$ space.
If we divide $Q_0$ into subintervals $Q_i = \left[ 2^{-i},2^{-i+1} \right) $, it turns out that
\begin{equation*}
|Q_i| \left( \fint_{Q_i} | f - f_{Q_i} | \right)^p
= |Q_1| \left( \fint_{Q_1} | f - f_{Q_1} | \right)^p
> 0
\end{equation*}
for all $i \geq 1$.
Thus it is easy to see that $f \notin JN_p(Q_0)$.
This counterexample can also be easily extended into the multidimensional case $Q_0 \subset \mathbb{R}^n$ by using the following proposition.

\begin{proposition}
\label{laajennus}
Let $Q_0 \subset \mathbb{R}^n$ be a cube, $f \in L^1(Q_0)$, and $\tilde{f}(x,t) := f(x)$ its trivial extension to
$(x,t) \in \tilde{Q}_0 := Q_0 \times [0, l(Q_0)) \subset \mathbb{R}^{n+1}$. Then $f \in JN_p(Q_0)$ if and only if $\tilde{f} \in JN_p(\tilde{Q}_0)$, and
\begin{equation*}
2^{-1/p} \| f \|_{JN_p(Q_0)} l(Q_0)^{1/p}
\leq \big\| \tilde{f} \big\|_{JN_p(\tilde{Q}_0)}
\leq \| f \|_{JN_p(Q_0)} l(Q_0)^{1/p}.
\end{equation*}
\end{proposition}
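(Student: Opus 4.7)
The plan is to prove both inequalities directly, exploiting the fact that $\tilde f(x,t) = f(x)$ is constant in $t$, so that mean oscillations on a product cube $Q' \times I$ reduce to mean oscillations of $f$ on $Q'$.

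For the upper bound I would take an arbitrary pairwise disjoint family $(\tilde Q_i)$ in $\tilde Q_0$, write each $\tilde Q_i = Q_i' \times [a_i, a_i + l_i)$ with $Q_i' \subset Q_0$ a cube of side length $l_i$, and observe that the contribution of $\tilde Q_i$ to the $JN_p$ sum equals
\[
|\tilde Q_i|\left(\fint_{\tilde Q_i}|\tilde f - \tilde f_{\tilde Q_i}|\right)^p = l_i \, |Q_i'| \left(\fint_{Q_i'}|f - f_{Q_i'}|\right)^p.
\]
Writing $l_i = \int_0^{l(Q_0)} \mathbf{1}_{[a_i, a_i + l_i)}(t)\, dt$ and using Fubini, the total sum over $i$ becomes
\[
\int_0^{l(Q_0)} \sum_{i\,:\,t \in [a_i, a_i + l_i)} |Q_i'| \left(\fint_{Q_i'}|f - f_{Q_i'}|\right)^p dt.
\]
The crucial point is that for each fixed $t$ the family $\{Q_i' : t \in [a_i, a_i + l_i)\}$ is pairwise disjoint in $Q_0$, being the horizontal slice at height $t$ of the disjoint family $(\tilde Q_i)$. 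Hence the inner sum is at most $\|f\|_{JN_p(Q_0)}^p$, and integrating yields $\|\tilde f\|_{JN_p(\tilde Q_0)}^p \leq l(Q_0) \|f\|_{JN_p(Q_0)}^p$.

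For the lower bound, given a pairwise disjoint family $(Q_i)$ in $Q_0$ with side lengths $l_i \leq l(Q_0)$, I would stack $N_i := \lfloor l(Q_0)/l_i \rfloor$ vertical copies $\tilde Q_{i,k} := Q_i \times [k l_i, (k+1) l_i)$ for $k = 0, \ldots, N_i - 1$. These are genuine cubes contained in $\tilde Q_0$, and they are pairwise disjoint across all pairs $(i,k)$. Each contributes $l_i |Q_i| (\fint_{Q_i}|f - f_{Q_i}|)^p$ to the $JN_p$ sum; summing over $k$ and using $N_i l_i \geq l(Q_0)/2$ (which follows from $l_i \leq l(Q_0)$ via the elementary estimate $\lfloor x \rfloor \geq x/2$ for $x \geq 1$), the total is at least $\tfrac{l(Q_0)}{2} \sum_i |Q_i|(\fint_{Q_i}|f - f_{Q_i}|)^p$. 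Taking $p$-th roots and supremizing over $(Q_i)$ yields the claimed factor $2^{-1/p} l(Q_0)^{1/p}$. The equivalence $f \in JN_p(Q_0) \iff \tilde f \in JN_p(\tilde Q_0)$ then follows immediately from the two comparison inequalities (together with the trivial $L^1$-equivalence via Fubini).

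I do not foresee a real obstacle. The only delicate point is that in the upper bound the projections $Q_i'$ themselves need not be disjoint in $Q_0$, which is precisely why the slicing argument in $t$ is essential: at each fixed level, disjointness is automatic from disjointness of the $\tilde Q_i$. The factor $2^{1/p}$ lost in the lower bound comes entirely from the fact that $[0, l(Q_0))$ cannot in general be exactly tiled by intervals of length $l_i$, forcing the use of a floor function in $N_i$; this loss is unavoidable with the stacking strategy.
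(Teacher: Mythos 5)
Your proof is correct: the Fubini/slicing argument (using that horizontal slices of a disjoint family of cubes in $\tilde Q_0$ are disjoint cubes in $Q_0$) gives the upper bound with constant $1$, and stacking $\lfloor l(Q_0)/l_i\rfloor$ vertical copies together with $\lfloor x\rfloor\geq x/2$ for $x\geq 1$ gives the lower bound with constant $2^{-1/p}$. The paper itself defers the proof to \cite[Proposition 4.1]{nontriviality}, and your argument is essentially that same standard one, as the matching constants indicate.
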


For the proof we refer to \cite[Proposition 4.1]{nontriviality}.
In conclusion we know that
\begin{equation*}
L^p(Q_0)
\subsetneq JN_p(Q_0)
\subsetneq L^{p,\infty} (Q_0).
\end{equation*}
This raises many questions about the properties of $JN_p$ functions. For example it is clear that for nonnegative $L^p$ functions the equivalence
\begin{equation}
\label{lpyhtälö}
f^{1/p} \in L^p(Q_0)
\iff
f^{1/q} \in L^q(Q_0)
\end{equation}
holds for all $p,q \in [1, \infty)$. This also holds for weak $L^p$ spaces.
\begin{equation*}
f^{1/p} \in L^{p,\infty}(Q_0)
\iff
f^{1/q} \in L^{q,\infty}(Q_0).
\end{equation*}
Our example in Section \ref{pääosio} proves that $JN_p$ spaces do not have the same property.
However there is a weaker result.

\begin{proposition}
\label{toimiikunpienempi}
If $f: Q_0 \rightarrow \mathbb{R}$ is a nonnegative function, then
\begin{equation*}
f^{1/p} \in JN_p(Q_0)
\text{  implies that  }
f^{1/q} \in JN_q(Q_0)
\end{equation*}
whenever $1<p \leq q<\infty$.
\end{proposition}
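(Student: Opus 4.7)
The plan is to write $g := f^{1/p}$ and $\alpha := p/q \in (0,1]$, so that $f^{1/q} = g^\alpha$, and then estimate the $JN_q$-seminorm of $g^\alpha$ in terms of the $JN_p$-seminorm of $g$. Throughout I will use the infimum version of the seminorm described in Remark \ref{infversio}, which only costs a factor of $2$, so that I am free to pick a convenient constant $c_i$ on each cube $Q_i$.

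On a pairwise disjoint collection $(Q_i) \subset Q_0$, I would take the natural choice $c_i := (g_{Q_i})^\alpha$. The first key step is the elementary inequality
\begin{equation*}
|a^\alpha - b^\alpha| \leq |a-b|^\alpha \quad \text{for } a,b \geq 0, \ \alpha \in (0,1],
\end{equation*}
which applied pointwise gives $|g^\alpha - c_i| \leq |g - g_{Q_i}|^\alpha$ on $Q_i$ (using $g \geq 0$, since $f \geq 0$). The second key step is Jensen's inequality applied to the concave function $t \mapsto t^\alpha$, yielding
\begin{equation*}
\fint_{Q_i} |g - g_{Q_i}|^\alpha \leq \left( \fint_{Q_i} |g - g_{Q_i}| \right)^\alpha.
\end{equation*}
Raising to the power $q$ converts the exponent $\alpha = p/q$ into exactly $p$, which is what matches the $JN_p$-seminorm of $g$.

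Combining these two inequalities, multiplying by $|Q_i|$, summing over $i$ and taking the supremum over all pairwise disjoint collections $(Q_i)$ in $Q_0$, I obtain
\begin{equation*}
\sum_{i=1}^\infty |Q_i| \left( \inf_{c_i} \fint_{Q_i} |g^\alpha - c_i| \right)^q
\leq \sum_{i=1}^\infty |Q_i| \left( \fint_{Q_i} |g - g_{Q_i}| \right)^p
\leq \|g\|_{JN_p(Q_0)}^p < \infty,
\end{equation*}
and then Remark \ref{infversio} gives $\|g^\alpha\|_{JN_q(Q_0)} \leq 2 \|g\|_{JN_p(Q_0)}^{p/q}$, which is the desired conclusion once I have verified $g^\alpha \in L^1(Q_0)$. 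The latter is immediate from $g^\alpha \leq 1 + g$ (valid for $\alpha \in [0,1]$ and $g \geq 0$), together with $g \in L^1(Q_0)$ (since $JN_p(Q_0) \subset L^1(Q_0)$ by definition) and the boundedness of $Q_0$.

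The proof is essentially routine; I do not anticipate a real obstacle. The only minor point to be careful about is that the naive attempt to centre $g^\alpha$ at its own integral average $(g^\alpha)_{Q_i}$ does not directly expose the right structure, which is why one should pass to the infimum formulation and choose the centring constant $(g_{Q_i})^\alpha$ by hand. Once that choice is made, the two inequalities $|a^\alpha - b^\alpha| \leq |a-b|^\alpha$ and Jensen's inequality do all the work.
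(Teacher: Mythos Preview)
Your proof is correct and follows essentially the same route as the paper: the paper's use of H\"older's inequality followed by the pointwise bound $|A-B|^{q/p}\le |A^{q/p}-B^{q/p}|$ is exactly your pair $|a^\alpha-b^\alpha|\le |a-b|^\alpha$ plus Jensen, applied in the reverse order, and the $L^1$ check is likewise the same idea ($g^\alpha\le 1+g$ versus $f^{1/q}\le f^{1/p}$ on $\{f\ge 1\}$). The only cosmetic difference is that the paper keeps the centring constant $c_i$ generic and takes the infimum at the end, whereas you commit to $c_i=(g_{Q_i})^\alpha$ from the start; both lead to the same bound $\|f^{1/q}\|_{JN_q}^q\le 2^q\|f^{1/p}\|_{JN_p}^p$.
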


\begin{proof}
Firstly we notice that $f^{1/q} \in L^1(Q_0)$, because $f^{1/q} \leq f^{1/p}$ whenever $f \geq 1$, $f^{1/p} \in L^1(Q_0)$ by assumption, and $Q_0$ is a bounded cube with finite measure.
We notice that for any cube $Q_i \subset Q_0$ and for any $c_i \in \mathbb{R}$
\begin{align*}
\left( \inf_{c_i' \in \mathbb{R}} \fint_{Q_i} \left| f^{1/q} - c_i' \right| \right)^q
&\leq \left( \fint_{Q_i} \left| f^{1/q} - |c_i|^{p/q} \right| \right)^q
\\
&\leq \left( \fint_{Q_i} \left| f^{1/q} - |c_i|^{p/q} \right|^{q/p} \right)^p
\\
&\leq \left( \fint_{Q_i} \left| f^{1/p} - c_i \right| \right)^p.
\end{align*}
Here we first used Hölder's inequality and then the inequality $a^r + b^r \leq (a+b)^r$, whenever $a,b \geq 0$ and $r \geq 1$. Also $| f^{1/p} - |c_i| | \leq | f^{1/p} - c_i |$. By taking the infimum over $c_i$, this implies that
\begin{equation*}
|Q_i| \left( \inf_{c_i' \in \mathbb{R}} \fint_{Q_i} \left| f^{1/q} - c_i' \right| \right)^q
\leq 
|Q_i| \left( \inf_{c_i \in \mathbb{R}} \fint_{Q_i} \left| f^{1/p} - c_i \right| \right)^p
\end{equation*}
and consequently
\begin{align*}
\left\| f^{1/q} \right\|_{JN_q(Q_0)}^q
&\leq
\sup_{Q_i \subset Q_0}
\sum_{i=1}^{\infty}
|Q_i| \left( 2 \inf_{c_i' \in \mathbb{R}} \fint_{Q_i} \left| f^{1/q} - c_i' \right| \right)^q
\\
&\leq
2^q \sup_{Q_i \subset Q_0}
\sum_{i=1}^{\infty}
|Q_i| \left( \inf_{c_i \in \mathbb{R}} \fint_{Q_i} \left| f^{1/p} - c_i \right| \right)^p
\leq 2^q \left\| f^{1/p} \right\|_{JN_p(Q_0)}^p.
\end{align*}
Here we used Remark \ref{infversio}. This completes the proof.
\end{proof}

The implication in Proposition \ref{toimiikunpienempi} does not hold in the other direction.
In the next section we construct a function for which
\begin{equation}
\label{esimerkintavoite}
f^{1/q} \in JN_q(Q_0)
\text{  and  }
f^{1/p} \notin JN_p(Q_0)
\end{equation}
where $1 < p < q < \infty$.

\section{An example that distinguishes $JN_p$ spaces with different parameters $p$}
\label{pääosio}

In this section we construct a function $f$ that distinguishes $JN_p$ spaces with different $p$ in the sense that $f^{1/p} \notin JN_p$ but $f^{1/q} \in JN_q$, where $q > p$.
Clearly we need to have $f^{1/q} \in JN_q \setminus L^q$, because if $f^{1/q} \in L^q$, then it follows from (\ref{lpyhtälö}) that $f^{1/p} \in L^p \subset JN_p$.
This is why the example is influenced by the function $g \in JN_p \setminus L^p$ given in \cite{nontriviality}.
Since the spaces $L^p$ and $JN_p$ coincide for monotone functions \cite[Theorem 2.1]{nontriviality}, all examples in $JN_p \setminus L^p$ must have a highly oscillatory structure.
It is enough to construct our function in the interval $[0,1]$. Then we can make a change of variable to get a similar function in an arbitrary interval.

\begin{theorem}
\label{päätulos}
Let $Q_0 = [0,1] \subset \mathbb{R}$ and let $1 < p < \infty$. Then there exists a nonnegative function $f$ such that $f^{1/q} \notin JN_q(Q_0)$ whenever $1 < q \leq p$, and $f^{1/q} \in JN_q(Q_0)$ for every $q > p$.
\end{theorem}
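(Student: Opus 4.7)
My strategy is to construct $f$ as a carefully layered combination on $[0,1]$ built from (scaled copies of) the base function $g \in JN_p([0,1]) \setminus L^p([0,1])$ of \cite{nontriviality}. A first naive attempt is to partition $[0,1]$ into disjoint intervals $Q_k = [2^{-k}, 2^{-k+1}]$ and to put $f^{1/p}(x) = c_k g(\phi_k(x))$ on each $Q_k$ (with $\phi_k : Q_k \to [0,1]$ affine), scaled by $c_k = 2^{k/p}$ so that the local $JN_p$ norm is a non-vanishing constant; then the family $\{Q_k\}$ already forces $\|f^{1/p}\|_{JN_p} = \infty$. By Proposition \ref{toimiikunpienempi} the non-membership $f^{1/p} \notin JN_p$ propagates to $f^{1/q} \notin JN_q$ for all $1 < q \leq p$ (contrapositive), so the remaining task is to arrange matters so that $f^{1/q} \in JN_q$ for every $q > p$.

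The defect of the naive copies is that the very scaling which drives the $p$-norm to infinity does the same for the $q$-norm, so the building blocks must be refined. I would compress the essential support of the block on $Q_k$ onto a subset of relative measure $\epsilon_k \ll 1$, effectively a (possibly multi-)bump of height $h_k$ on a set of relative size $\epsilon_k$. The basic identity for such a bump configuration is $|Q_k| \bigl( \fint_{Q_k} |f^{1/s} - (f^{1/s})_{Q_k}| \bigr)^s \asymp h_k^p |Q_k| \epsilon_k^s (1-\epsilon_k)^s$, so the $s$-exponent controls the power of $\epsilon_k$; this is the source of the sought $\epsilon_k^{q-p}$-gap between $p$- and $q$-contributions. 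Choosing $h_k^p |Q_k| = 1$ and $\epsilon_k = k^{-1/p}$ produces the harmonic series $\sum_k k^{-1} = \infty$ on the $p$-side and $\sum_k k^{-q/p} < \infty$ on the $q$-side (for every $q > p$), delivering the desired separation on the distinguished partition $\{Q_k\}$.

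The main obstacle is bounding the $JN_q$ supremum over \emph{every} admissible collection, not just $\{Q_k\}$. The dangerous competitors are cubes isolating a single bump inside some $Q_k$ with minimal padding: one such cube contributes $\asymp h_k^p |Q_k| \epsilon_k / m_k$ (where $m_k$ is the number of bumps in $Q_k$), and summed over all bumps in all $Q_k$ this is $\sum_k h_k^p |Q_k| \epsilon_k \asymp \sum_k k^{-1/p}$, which diverges for $p > 1$. To defeat these partitions the internal bump structure of $Q_k$ must be arranged so that single-bump isolation is never locally optimal; one device is to use many small evenly-spaced bumps on $Q_k$ so every cube enclosing a bump also encloses comparably many gaps (keeping the bump-to-cube ratio $\asymp \epsilon_k$ even locally), another is to recursively nest a scaled copy of the whole construction inside every would-be bump, so that no cube below the level of $Q_k$ ever sees a locally constant region. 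Verifying that a concrete such arrangement simultaneously (a) keeps the $\{Q_k\}$-partition $p$-divergent, (b) makes the $q$-contribution uniformly summable across partitions inside, straddling, and crossing the $Q_k$, and (c) leaves $f \in L^1$ so that $JN_p$-membership is meaningful, is the technical heart of the proof; once it is accomplished, the conclusions for the remaining values of $q$ follow immediately from Proposition \ref{toimiikunpienempi}.
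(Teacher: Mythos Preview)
Your proposal correctly isolates the numerology that should drive the construction (a harmonic series $\sum_k k^{-1}$ on the $p$-side versus $\sum_k k^{-q/p}$ on the $q$-side), and you are right that the whole difficulty lies in controlling \emph{all} partitions, not just the distinguished one. But the devices you offer to close this gap do not work. Device~1 (many evenly-spaced bumps) cannot force the local bump-to-cube ratio to stay near $\epsilon_k$: a cube $J$ barely larger than one bump always has ratio $\approx 1$, so its $JN_q$-contribution is $\asymp |J|\,h_k^p \asymp (\epsilon_k|Q_k|/m_k)\,h_k^p$, and summing over the $m_k$ bumps in $Q_k$ recovers exactly $\epsilon_k|Q_k|h_k^p=\epsilon_k$, whence $\sum_k\epsilon_k=\sum_k k^{-1/p}=\infty$ for every $q$. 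No amount of subdivision helps, because a flat-topped bump always admits a tight enclosing cube whose oscillation equals the full height. Device~2 (recursive nesting) is too vague to assess; read literally as nesting scaled copies of the same flat-bump picture, it inherits the same failure at every scale.

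The paper bypasses this trap by a different mechanism: it keeps the single fractal tower structure of $g$ from \cite{nontriviality} but replaces each flat roof by a \emph{sloped} one, linear from $a_i=(1-i^{-1/p})2^{i^2/p}$ to $b_i=(1+i^{-1/p})2^{i^2/p}$. A cube contained in a tower $\hat I$ then sees oscillation governed by $b_i-a_i\asymp i^{-1/p}\cdot 2^{i^2/p}$ rather than the full height $\asymp 2^{i^2/p}$; since $b_i^{p/q}-a_i^{p/q}\lesssim i^{-1/p}\cdot 2^{i^2/q}$ as well (Lemma~\ref{taylor}), the total ``contained'' contribution to $\|f^{1/q}\|_{JN_q}^q$ is $\asymp\sum_i i^{-q/p}$, harmonic at $q=p$ and convergent for $q>p$. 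The short/medium/long cases are then handled essentially as for $g$. The point is that the $\epsilon$-mechanism is encoded in the \emph{slope} of each tower, not in a separate spatial layer, so there is no ``isolate a single bump'' partition to defeat: every tower already has small interior oscillation relative to its height.
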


Let us denote the function $f^{1/p}$ in Theorem \ref{päätulos} by $u$ and the function in \cite{nontriviality} by $g$. Both $u$ and $g$ consist of “towers” of width $l_i$ where $i \in \mathbb{Z}_+$. Each tower has an inclined roof with its left side set at height $a_i$ and the right side at height $b_i$. The roof of the tower is linear. On both sides of the tower, at distance $d_i$, there are two similar narrower towers of width $l_{i+1}$ and height ranging from $a_{i+1}$ to $b_{i+1}$. This construction continues indefinitely, with every tower at a given level $i$ having two towers of the next level $i+1$ on both side at distance $d_i$. So there is one tower at level 1, two towers at level 2, 4 towers at level 3 etc. See Figure 1 to get a better qualitative understanding of the function.

To give the exact definition let us use the following dyadic notation. For any dyadic interval $I \subset \left[ 0, \frac{1}{2} \right) $ of length $2^{-i}$, there is a corresponding interval $\hat{I} \subset Q_0$ of length $l_i$. In \cite{nontriviality} the indexing is started from $[0,1)$, but here we start it from $ \left[ 0, \frac{1}{2} \right) $ for technical reasons. It doesn't change the fact that $g \in JN_p \setminus L^p$.
The locations of $\hat{I}$ in $Q_0$ are defined recursively. For $I^1 := \left[ 0, \frac{1}{2} \right) $, let $\hat{I^1}$ be the interval of length $l_1$ located in the center of $Q_0$. If $\hat{I}$ is already defined and $I'$ is the left (right) half of $I$, let $\hat{I'}$ be the interval of length $l_{i+1}$ positioned on the left (right) side of $\hat{I}$ in such a way that dist$(\hat{I'}, \hat{I}) = d_i$.

We denote the intervals by $\hat{I} = [t_I,t_I+l_i]$. For every interval $\hat{I}$ we define the function
\begin{equation*}
u^I(x) := \frac{b_i - a_i}{l_i} (x - t_I) + a_i
\end{equation*}
for every $x \in \hat{I}$. The function $u^I$ is identically zero elsewhere. The function $g^I$ is defined the same way but with different parameters. Note that $u^I$ and $g^I$ are supported in $\hat{I}$, not in $I$.
Finally we define the function $u$ ($g$) as the sum of all these functions $u^I$ ($g^I$):
\begin{equation*}
u(x) := \sum_{\substack{I \subseteq [0,\frac{1}{2}) \\ I \text{  dyadic  } }} u^I(x)
\end{equation*}
for every $x \in Q_0$.


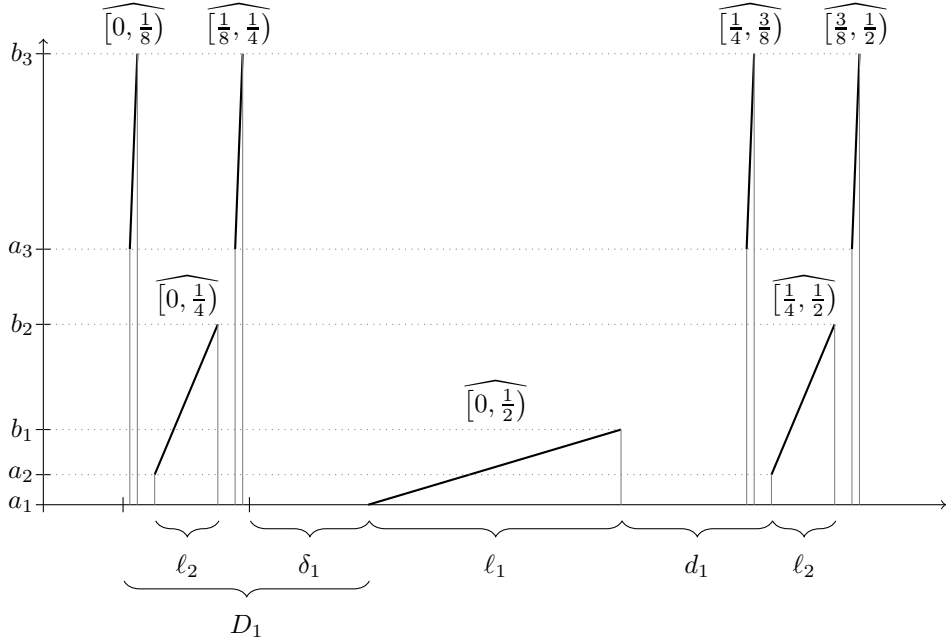
\begin{figure}[ht]
\centering
\begin{tikzpicture}
\draw[black,->] (-6.1,0) -- (6,0);
\draw[black,->] (-6,-0.1) -- (-6,6.2) ;

\draw[black, thick] (1.68,1)--(-1.68,0);
\draw[gray] (1.68,0)--(1.68,1);

\draw (-6.1, 1)--(-5.9,1);
\draw (-6,1) node[left] {$b_1$};
\draw (-6,0) node[left] {$a_1$};
\draw[dotted,gray] (-6,1)--(1.68,1);


\draw[black, thick] (3.68,0.4)--(4.52,2.4);
\draw[gray] (3.68,0)--(3.68,0.4);
\draw[gray] (4.52,0)--(4.52,2.4);

\draw[black, thick] (-3.68,2.4)--(-4.52,0.4);
\draw[gray] (-3.68,0)--(-3.68,2.4);
\draw[gray] (-4.52,0)--(-4.52,0.4);

\draw (-6.1, 2.4)--(-5.9,2.4);
\draw (-6.1, 0.4)--(-5.9,0.4);
\draw (-6,2.4) node[left] {$b_2$};
\draw[dotted,gray] (-6,2.4)--(4.52,2.4);
\draw (-6,0.4) node[left] {$a_2$};
\draw[dotted,gray] (-6,0.4)--(3.68,0.4);

\draw[black, thick](4.75,3.4)--(4.85,6) node[black,midway,yshift=1.7cm] {$\widehat{ \left[ \frac{3}{8},\frac{1}{2} \right) }$};
\draw[gray] (4.75,0)--(4.75,3.4);
\draw[gray] (4.85,0)--(4.85,6);

\draw[black, thick](-4.75,6)--(-4.85,3.4)  node[black,midway,yshift=1.7cm] {$\widehat{ \left[ 0,\frac{1}{8} \right) }$}  ;
\draw[gray] (-4.75,0)--(-4.75,6);
\draw[gray] (-4.85,0)--(-4.85,3.4);

\draw[black, thick](3.45,6)--(3.35,3.4) node[black,midway,yshift=1.7cm] {$\widehat{ \left[ \frac{1}{4},\frac{3}{8} \right) }$};
\draw[gray] (3.45,0)--(3.45,6);
\draw[gray] (3.35,0)--(3.35,3.4);

\draw[black, thick](-3.45,3.4)--(-3.35,6)  node[black,midway,yshift=1.7cm] {$\widehat{ \left[ \frac{1}{8},\frac{1}{4} \right) }$};
\draw[gray] (-3.45,0)--(-3.45,3.4);
\draw[gray] (-3.35,0)--(-3.35,6);

\draw (-6.1, 3.4)--(-5.9,3.4);
\draw (-6.1, 6)--(-5.9,6);
\draw (-6,3.4) node[left] {$a_3$};
\draw (-6,6) node[left] {$b_3$};
\draw[dotted,gray] (-6,3.4)--(4.75,3.4);
\draw[dotted,gray] (-6,6)--(4.85,6);

\draw [yshift=-0.2cm,decorate,decoration={brace,amplitude=6pt,mirror},xshift=0.4pt,yshift=-0.4pt](-1.68,0) -- (1.68,0) node[black,midway,yshift=-0.6cm] { $\ell_1$} node[black,midway,yshift=1.6cm] {$\widehat{ \left[ 0,\frac{1}{2} \right) }$};

\draw [yshift=-0.2cm,decorate,decoration={brace,amplitude=6pt,mirror},xshift=0.4pt,yshift=-0.4pt](3.68,0) -- (4.52,0) node[black,midway,yshift=-0.6cm] { $\ell_2$}  node[black,midway,yshift=3cm] {$\widehat{ \left[ \frac{1}{4},\frac{1}{2} \right) }$} ;

\draw [yshift=-0.2cm,decorate,decoration={brace,amplitude=6pt,mirror},xshift=0.4pt,yshift=-0.4pt](-4.52,0) --(-3.68,0) node[black,midway,yshift=-0.6cm] { $\ell_2$}  node[black,midway,yshift=3cm] {$\widehat{ \left[ 0,\frac{1}{4} \right) }$} ;

\draw [yshift=-0.2cm,decorate,decoration={brace,amplitude=6pt,mirror},xshift=0.4pt,yshift=-0.4pt](-3.26,0) --(-1.68,0) node[black,midway,yshift=-0.6cm] { $\delta_1$}   ;
\draw [black]  (-3.26,-0.1)--(-3.26,0.1);

\draw [yshift=-1cm,decorate,decoration={brace,amplitude=6pt,mirror},xshift=0.4pt,yshift=-0.4pt](-4.94,0) --(-1.68,0) node[black,midway,yshift=-0.6cm] { $D_1$}   ;
\draw [black]  (-4.94,-0.1)--(-4.94,0.1);

\draw [yshift=-0.2cm,decorate,decoration={brace,amplitude=6pt,mirror},xshift=0.4pt,yshift=-0.4pt](1.68,0) -- (3.68,0) node[black,midway,yshift=-0.6cm] { $d_1$};

\end{tikzpicture}

\caption{First three generations $(i = 1, 2, 3)$ in the construction of $u$ (with $p = 2$). $\delta_1$ (resp. $D_1$) is the distance from $\widehat{ \left[ 0,\frac{1}{2} \right) }$ to the nearest (resp. farthest) $\hat{I}$ after infinitely many iterations of the construction. Notice that $a_1 = 0$ by definition.}

\end{figure}

We can recover the function $g$ from \cite{nontriviality} by choosing $a_i = b_i$. Thus $g$ is constant in every interval $\hat{I}$. In this case we denote the height by $h_i$. For $g$ the heights, widths and distances are defined as
\begin{equation}
\begin{cases}
l_i &= 2^{- \left( i+\frac{1}{2} \right)^2} , \\
d_i &= 2^{-(i+1)^2} \text{  and  } \\
h_i &= 2^{i^2/p}
\end{cases}
\label{gparametrit}
\end{equation}
for every $i \geq 1$.
It was already shown that $g \in JN_p$. Then it is easy to see that also $g^{p/q} \in JN_q$ for any $q>1$, because $g^p$ does not depend on $p$.
Therefore it is impossible for that function to separate $JN_p$ spaces with different parameters $p$. The function $g$ had to be modified to get the function $u$ which does prove the result (\ref{esimerkintavoite}).

For the function $u$ the widths, distances and heights are defined as
\begin{equation*}
\begin{cases}
l_i &= 2^{- \left( i+\frac{1}{2} \right)^2} , \\
d_i &= 2^{-(i+1)^2} , \\
a_i &= \left( 1-i^{-1/p} \right) 2^{i^2/p} \text{  and  } \\
b_i &= \left( 1+i^{-1/p} \right) 2^{i^2/p}
\end{cases}
\end{equation*}
for every $i \geq 1$. We will use these definitions from now on. Note that the definitions for $l_i$ and $d_i$ are the same as the respective definitions for $g$. The key difference is that $a_i$ and $b_i$ are different.
It was shown in \cite{nontriviality}, that the intervals $\hat{I}$ are disjoint. Indeed given an interval of length $l_i$, its distance to any other interval $\hat{I}'$ is at least
\begin{equation*}
\delta_i
:= d_i - d_{i+1} - l_{i+2} - d_{i+2} - l_{i+3} - ...
\geq \frac{1}{2} d_i.
\end{equation*}
This means that the functions $u^I$ have disjoint supports. 
Similarly we have a bound for $D_i$, the largest distance from an interval $\hat{I}$ at level $i$ to any of its descendants:
\begin{equation*}
D_i
:= d_i + l_{i+1} + d_{i+1} + l_{i+2} + d_{i+2} + l_{i+3} + ...
\leq 3 d_i.
\end{equation*}
For the proof we refer to \cite[Lemma 3.3]{nontriviality}.
From this it follows that the support of $u$ is contained within an interval of length $l_1 + 2 D_1 \leq \frac{1}{4} + 6 \cdot \frac{1}{16} \leq 1$. This means that the function is indeed supported in $[0,1]$.
It is also useful to notice that $b_i = (1+ i^{-1/p}) 2^{i^2/p} \leq 2 \cdot 2^{i^2/p}$ for every $i \geq 1$.
We summarize the results so far into the following lemma.
\begin{lemma}
For every $i \geq 1$ we have
\begin{align*}
\frac{1}{2} d_i \leq \delta_i \leq d_i &\leq D_i \leq 3 d_i \text{,  and  }
\\
b_i^p &\leq 2^p \cdot 2^{i^2} .
\end{align*}
\end{lemma}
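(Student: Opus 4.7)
The lemma collects three separate claims, of which two are essentially noted in the surrounding text and one is entirely routine. I would organize the proof accordingly.

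First I would dispose of the inequality $b_i^p \leq 2^p \cdot 2^{i^2}$. Since $i \geq 1$ gives $i^{-1/p} \leq 1$, we have $1 + i^{-1/p} \leq 2$, so $b_i = (1 + i^{-1/p}) 2^{i^2/p} \leq 2 \cdot 2^{i^2/p}$, and raising to the $p$-th power yields the claim. This is the one-line observation already recorded just before the lemma statement.

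Next, the bounds $\delta_i \leq d_i$ and $D_i \geq d_i$ are immediate from the defining formulas, as $\delta_i$ equals $d_i$ minus a sum of nonnegative terms, while $D_i$ equals $d_i$ plus a sum of nonnegative terms. The substantive content is the outer inequalities $\delta_i \geq \tfrac12 d_i$ and $D_i \leq 3 d_i$, which amount to showing that the tails
\[
T_\delta := \sum_{j=i+1}^{\infty} d_j + \sum_{j=i+2}^{\infty} l_j, \qquad T_D := \sum_{j=i+1}^{\infty} l_j + \sum_{j=i+1}^{\infty} d_j
\]
satisfy $T_\delta \leq \tfrac12 d_i$ and $T_D \leq 2 d_i$ respectively. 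The plan is to exploit the Gaussian-type decay of $l_j = 2^{-(j+1/2)^2}$ and $d_j = 2^{-(j+1)^2}$. Concretely I would compute the two leading ratios
\[
\frac{d_{i+1}}{d_i} = 2^{-2i-3}, \qquad \frac{l_{i+1}}{d_i} = 2^{-i - 5/4}, \qquad \frac{l_{i+2}}{d_i} = 2^{-3i - 21/4},
\]
and observe that the successive ratios $d_{j+1}/d_j$ and $l_{j+1}/l_j$ are each bounded by, say, $2^{-5}$ for $j \geq i+1 \geq 2$. Hence each tail is dominated by twice its first term via a geometric-series comparison, and plugging in the bounds above gives $T_\delta$ and $T_D$ well within the required constants for every $i \geq 1$.

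The main obstacle is simply bookkeeping the exponents correctly in the quadratic exponentials; there is no conceptual difficulty, and indeed the argument mirrors \cite[Lemma 3.3]{nontriviality}, which handles the $\delta_i$ and $D_i$ estimates in exactly this fashion since the parameters $l_i$ and $d_i$ are unchanged from that construction. For this reason I would state the $l_i$/$d_i$ portion as an appeal to \cite[Lemma 3.3]{nontriviality} and give only the one-line argument for $b_i^p \leq 2^p \cdot 2^{i^2}$ in detail.
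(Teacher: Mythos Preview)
Your proposal is correct and matches the paper's approach: the paper likewise refers the $\delta_i$ and $D_i$ estimates to \cite[Lemma 3.3]{nontriviality} (since $l_i$ and $d_i$ are identical to those in that construction) and records the one-line bound $b_i \leq 2\cdot 2^{i^2/p}$ just before stating the lemma as a summary. Your additional sketch of the geometric-series tail estimate is accurate and more explicit than what the paper provides, but the overall strategy is the same.
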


Now that we have defined the function $u = f^{1/p}$, we show that $u \notin JN_p(Q_0)$, which together with Proposition \ref{toimiikunpienempi} implies that $f^{1/q} \notin JN_q(Q_0)$ whenever $1 < q \leq p$.

\begin{proposition}
\label{räjähdys}
$u = f^{1/p} \notin JN_p(Q_0)$.
\end{proposition}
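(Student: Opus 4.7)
The plan is to exhibit a single explicit collection of pairwise disjoint cubes in $Q_0$ on which the $JN_p$ sum blows up. The natural candidate is the family $\{\hat{I} : I \subseteq [0,\tfrac{1}{2}) \text{ dyadic}\}$ itself: by the construction, the intervals $\hat{I}$ are pairwise disjoint (this is exactly the content of the estimate $\delta_i \geq \tfrac{1}{2}d_i$), so any finite subfamily is admissible in Definition \ref{jnpmaaritelma}.

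The key observation is that on a single $\hat{I}$ at level $i$ the function $u$ coincides with the linear ramp $u^I$, since every other $u^J$ is supported in $\hat{J}$ and the $\hat{J}$ are disjoint from $\hat{I}$. For a linear function ramping from $a_i$ to $b_i$ on an interval of length $l_i$, a direct computation gives
\begin{equation*}
u_{\hat{I}} \;=\; \frac{a_i+b_i}{2} \;=\; 2^{i^2/p},
\qquad
\fint_{\hat{I}} |u - u_{\hat{I}}| \;=\; \frac{b_i - a_i}{4} \;=\; \frac{1}{2}\, i^{-1/p}\, 2^{i^2/p}.
\end{equation*}
Thus each $\hat{I}$ at level $i$ contributes
\begin{equation*}
|\hat{I}| \left( \fint_{\hat{I}} |u - u_{\hat{I}}| \right)^p
\;=\; 2^{-(i+1/2)^2} \cdot 2^{-p} \, i^{-1} \, 2^{i^2}
\;=\; 2^{-p-1/4}\, \frac{2^{-i}}{i}.
\end{equation*}

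Next I would sum over the $2^{i-1}$ intervals at level $i$, which exactly cancels the $2^{-i}$ factor and produces
\begin{equation*}
\sum_{\substack{I \subseteq [0,1/2) \\ \ell(I) = 2^{-i}}}
|\hat{I}| \left( \fint_{\hat{I}} |u - u_{\hat{I}}| \right)^p
\;=\; \frac{2^{-p-5/4}}{i}.
\end{equation*}
Summing over $i \leq N$ yields a constant multiple of the harmonic partial sum $\sum_{i=1}^{N} i^{-1}$, which tends to infinity. Applying Definition \ref{jnpmaaritelma} with the finite collection $\{\hat{I} : I \text{ dyadic}, \ell(I) \geq 2^{-N}\}$ and letting $N \to \infty$ therefore forces $\|u\|_{JN_p(Q_0)} = \infty$.

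The whole argument is essentially a calibration check: the widths $l_i$, heights $2^{i^2/p}$ and multiplicities $2^{i-1}$ are tuned so that the level-$i$ contribution is $\Theta(1/i)$, borderline divergent. The only nontrivial step is the first one — confirming that $u\big|_{\hat{I}} = u^I\big|_{\hat{I}}$ so that the oscillation of $u$ on $\hat{I}$ is computed purely from the linear ramp and not contaminated by descendant towers — and this is immediate from the disjointness of supports already established before the statement.
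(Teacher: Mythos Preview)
Your proof is correct and follows essentially the same route as the paper: both use the disjoint family $\{\hat{I}\}$, compute the mean oscillation of the linear ramp on each $\hat{I}$ to be $\tfrac{1}{2}\,i^{-1/p}\,2^{i^2/p}$, and observe that the level-$i$ contributions sum to a constant multiple of the harmonic series. Your explicit remark that $u|_{\hat{I}}=u^I|_{\hat{I}}$ because the supports are disjoint is a helpful clarification that the paper leaves implicit.
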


\begin{proof}
Because $u$ is linear in the intervals $\hat{I}$, it has a constant slope within $\hat{I}$, which we denote by $k_I$. Then it is easy to see that
\begin{equation*}
\int_{\hat{I}} \left| u - u_{\hat{I}} \right|
= \frac{|\hat{I}|}{2} \cdot k_I \cdot \frac{|\hat{I}|}{2}
= \frac{l_i^2}{4} \cdot \frac{b_i - a_i}{l_i}.
\end{equation*}
By plugging in the values of $a_i$ and $b_i$ we get
\begin{equation*}
\fint_{\hat{I}} \left| u - u_{\hat{I}} \right|
= \frac{1}{4} \cdot 2 \cdot i^{-1/p} \cdot 2^{i^2/p}.
\end{equation*}
This finally implies that
\begin{align*}
\| u \|_{JN_p(Q_0)}^p
&\geq \sum_{I \text{  dyadic  }} |\hat{I}| \left( \fint_{\hat{I}} \left| u - u_{\hat{I}} \right| \right)^p
\\
&= \sum_{i=1}^{\infty} 2^{i-1} \cdot 2^{-i^2 - i - \frac{1}{4}} \left( \frac{1}{2} \cdot i^{-1/p} \cdot 2^{i^2/p} \right)^p
\\
&= \sum_{i=1}^{\infty} 2^{-1 - \frac{1}{4} -p} \cdot \frac{1}{i}
= \infty,
\end{align*}
since the harmonic series diverges.
\end{proof}

From now on we fix a number $q \in (p,\infty)$. Our goal is to prove that $u^{p/q} = f^{1/q} \in JN_q(Q_0)$. To simplify the notation we denote $u^{p/q}$ by $v$. First we need to prove the following lemmas.

\begin{lemma}
\label{ulkopuolisenyläraja}
Let $I$ be a dyadic interval of length $2^{-i}$.
Then
\begin{equation*}
\int_{\hat{I}} v
\leq c 2^{-i^2/q' - i},
\end{equation*}
where $q' = \frac{q}{q-1}$ is the Hölder conjugate of $q$. Also
\begin{equation*}
\sum_{\substack{I' \subsetneq I \\ I' \text{  dyadic  }}} \int_{\hat{I'}} v
\leq c 2^{-(i+1)^2/q' - i}.
\end{equation*}
In particular $v \in L^1(Q_0)$.
Here the constants $c$ depend only on $p$ and $q$.
\end{lemma}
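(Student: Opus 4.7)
The plan is to derive the first bound from the pointwise estimate $v(x) = u(x)^{p/q} \leq b_i^{p/q}$ on $\hat{I}$, which holds because $u$ is linear from $a_i$ to $b_i$ on $\hat{I}$ and identically zero outside $\bigcup_I \hat{I}$. Combining the estimate $b_i^p \leq 2^p \cdot 2^{i^2}$ from the preceding lemma with $|\hat{I}| = l_i = 2^{-(i+1/2)^2}$ gives
\begin{equation*}
\int_{\hat{I}} v \leq b_i^{p/q} \cdot l_i \leq 2^{p/q} \cdot 2^{i^2/q} \cdot 2^{-(i+1/2)^2}.
\end{equation*}
The essential algebraic identity is $1/q - 1 = -1/q'$, which rewrites the exponent $i^2/q - (i+\tfrac12)^2$ as $-i^2/q' - i - \tfrac14$. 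Absorbing the fixed constant $2^{p/q - 1/4}$ into $c$ yields the first bound.

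For the second bound, enumerate proper dyadic subintervals of $I$ by their level $j > i$: there are exactly $2^{j-i}$ such $I'$ at level $j$, and by construction all intervals $\hat{I'}$ at level $j$ have identical shape, so they contribute the same integral, which is already controlled by the first bound. Summing,
\begin{equation*}
\sum_{\substack{I' \subsetneq I \\ I' \text{ dyadic}}} \int_{\hat{I'}} v \leq \sum_{j=i+1}^{\infty} 2^{j-i} \cdot c \cdot 2^{-j^2/q' - j} = c \cdot 2^{-i} \sum_{j=i+1}^{\infty} 2^{-j^2/q'}.
\end{equation*}
The sequence $2^{-j^2/q'}$ is super-geometric: the ratio of consecutive terms is $2^{-(2j+1)/q'} \leq 2^{-3/q'} < 1$ for $j \geq 1$, so the tail is dominated by its first term $2^{-(i+1)^2/q'}$ times a constant depending only on $q$. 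This produces the claimed bound with exponent $-(i+1)^2/q' - i$.

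Finally, $v \in L^1(Q_0)$ follows by applying both bounds to $I^1 = [0, \tfrac12)$: since the supports $\hat{I}$ are pairwise disjoint (by the preceding lemma on $\delta_i$) and all of $\supp u$ is contained in $\hat{I^1} \cup \bigcup_{I' \subsetneq I^1} \hat{I'}$, the total integral $\int_{Q_0} v$ is a sum of two finite quantities.

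The calculation is essentially bookkeeping; the only conceptual point is noticing that the identity $1/q - 1 = -1/q'$ is what converts the raw exponent into the clean $-i^2/q'$ form that will later be needed for summing over cube collections. I do not anticipate any serious obstacle beyond keeping the exponents straight.
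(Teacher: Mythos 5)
Your proposal is correct and follows essentially the same route as the paper: the pointwise bound $v \leq b_i^{p/q}$ on $\hat{I}$ combined with $l_i = 2^{-(i+1/2)^2}$ and the identity $1/q - 1 = -1/q'$ for the first estimate, the count of $2^{j-i}$ dyadic subintervals at level $j$ together with the first estimate for the second, and the case $I = [0,\tfrac12)$ for $v \in L^1(Q_0)$. The only cosmetic difference is in summing the tail: the paper factors out $2^{-(i^2+2i)/q'}$ via $(i+j)^2 \geq i^2 + 2i + j^2$, while you dominate the super-geometric tail by its first term; both yield the same constant $c(p,q)$ and exponent $-(i+1)^2/q' - i$.
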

\begin{proof}
Because $u \leq b_i$ in $\hat{I}$, we have
\begin{equation*}
\int_{\hat{I}} v
\leq | \hat{I} | \cdot b_i^{p/q}
\leq l_i \left( c 2^{i^2} \right)^{1/q}
= 2^{-i^2 - i} \cdot c \cdot 2^{i^2/q}
= c 2^{-i^2/q' -i}.
\end{equation*}
Additionally
\begin{align*}
\sum_{\substack{I' \subsetneq I \\ I' \text{  dyadic  }}} \int_{\hat{I'}} v
&\leq \sum_{j=1}^{\infty} 2^j \cdot c 2^{-(i+j)^2/q' - (i+j)}
= c \sum_{j=1}^{\infty} 2^{-(i+j)^2/q' -i}
\\
&\leq c 2^{(-i^2 - 2i)/q' - i} \sum_{j=1}^{\infty} 2^{- j^2/q'}
\\
&\leq c 2^{-(i+1)^2/q' - i}.
\end{align*}
The fact that $v \in L^1$ follows from the previous result by having $I = \left[ 0,\frac{1}{2} \right)$, because then
\begin{equation*}
\|v\|_{L^1(Q_0)}
= \int_{\hat{I}} v + \sum_{\substack{I' \subsetneq I \\ I' \text{  dyadic  }}} \int_{\hat{I'}} v
\leq c 2^{-1/q' - 1} + c 2^{-2^2/q' - 2}
< \infty.
\end{equation*}
\end{proof}

The following lemma is easy to prove by finding the local minima of the function $h(x) = 2x - \left( 1 + x \right)^{p/q} + \left( 1 - x \right)^{p/q}$, where $0 \leq x \leq 1$, or by using the Taylor approximation $(1+x)^r \approx 1 + rx$, when $|x|$ is small.
\begin{lemma}
\label{taylor}
Let $1 < p < q < \infty$ and $i \in \mathbb{Z}_+$. Then
\begin{equation*}
\left( 1 + i^{-1/p} \right)^{p/q} - \left( 1 - i^{-1/p} \right)^{p/q}
\leq 2 i^{-1/p}.
\end{equation*}
\end{lemma}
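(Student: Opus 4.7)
The plan is to substitute $x := i^{-1/p}$ and $r := p/q$, which puts $x \in (0,1]$ (since $i \geq 1$ and $p > 1$) and $r \in (0,1)$ (since $p < q$). The inequality then becomes the purely one-variable statement
\[ h(x) := 2x - (1+x)^r + (1-x)^r \geq 0 \qquad \text{for every } x \in [0,1], \]
depending only on the single parameter $r \in (0,1)$.

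First I would record the endpoint values: $h(0) = 0$ is immediate, and $h(1) = 2 - 2^r > 0$ follows from $r < 1 \Rightarrow 2^r < 2$. Next I would differentiate twice to study the shape of $h$. A direct computation gives
\[ h''(x) = r(r-1)\bigl[(1-x)^{r-2} - (1+x)^{r-2}\bigr], \]
whose sign is the sign of a negative prefactor $r(r-1) < 0$ times a bracket that is positive because $r-2 < 0$ combined with $0 \leq 1-x < 1+x$ gives $(1-x)^{r-2} > (1+x)^{r-2}$. Hence $h'' < 0$ on $(0,1)$, so $h$ is strictly concave on $[0,1]$.

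To finish, I would combine concavity with the endpoint values: for any $x \in [0,1]$, writing $x = x \cdot 1 + (1-x) \cdot 0$ yields
\[ h(x) \geq x\,h(1) + (1-x)\,h(0) = x(2 - 2^r) \geq 0, \]
which is the desired inequality. I do not expect any real obstacle; the substitution collapses the statement to one-variable calculus, and from there the argument is driven entirely by keeping track of the signs of $r-1$ and $r-2$. This concavity route seems cleaner than the alternative of directly locating the critical point of $h$ (which is what the hint about local minima suggests), and it is also more robust than the Taylor expansion $(1 \pm x)^r \approx 1 \pm rx$, which on its own only controls the regime of small $x$ and would require a separate argument as $x \to 1$.
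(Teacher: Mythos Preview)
Your proof is correct and follows the paper's own suggestion of analyzing the auxiliary function $h(x) = 2x - (1+x)^{p/q} + (1-x)^{p/q}$; the paper does not actually write out a proof but merely hints that one should ``find the local minima'' of $h$ or use a Taylor approximation. Your concavity argument is a clean implementation of the first hint: since $h'' < 0$ on $(0,1)$, the minimum of $h$ on $[0,1]$ must occur at an endpoint, which achieves the same conclusion as locating the critical points but avoids solving $h'(x)=0$ explicitly.
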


To prove that $v \in JN_q(Q_0)$, we let $\mathcal{J}$ be a countable collection of disjoint subintervals of $Q_0$. For $J \in \mathcal{J}$ we define
\begin{equation*}
F(J)
:= |J| \left( \fint_J \left| v - v_J \right| \right)^q.
\end{equation*}
It is clear that if $J$ does not intersect any of the intervals $\hat{I}$, then $F(J) = 0$. Also if $J$ does intersect some of the intervals, then there is a unique widest interval $I_J$, such that $J$ intersects $\hat{I}_J$. We consider four separate cases how $J$ relates to $I_J$. Three of these cases are the same as in \cite{nontriviality}. We added the fourth “contained” one, because this time the function is not constant within the intervals $\hat{I}$.

\begin{definition}
\label{containedshortmediumlong}
\begin{align*}
&1) \text{  } J \text{  is contained if  } |J \setminus \hat{I}_J| = 0 \text{  i.e.  } J \subset \hat{I}_J
\\
&2) \text{  } J \text{  is short if  } 0 < |J \setminus \hat{I}_J| \leq \delta_I
\\
&3) \text{  } J \text{  is medium if  } \delta_I < |J \setminus \hat{I}_J| \leq 2 D_I
\\
&4) \text{  } J \text{  is long if  } |J \setminus \hat{I}_J| > 2 D_I.
\end{align*}
\end{definition}

\begin{lemma}
\label{uusifjraja}
Assume that $J$ intersects some of the intervals $\hat{I}$ and $I = I_J$. Then
\begin{equation*}
F(J)
\leq c \left[ |J|^{1-q} \left( \int_{J \setminus \hat{I} } v \right)^q + \min \big( |J \setminus \hat{I}| , |J \cap \hat{I}| \big) b_I^p + |J \cap \hat{I} | i^{-q/p} \cdot 2^{i^2} \right],
\end{equation*}
where $c = c(q)$ and $|I| = 2^{-i}$.
\end{lemma}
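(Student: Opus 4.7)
The plan is to use Remark~\ref{infversio} to replace $v_J$ by a conveniently chosen constant: for any $\alpha \in \mathbb{R}$,
\begin{equation*}
F(J) \leq 2^q |J|^{1-q}\left(\int_J |v-\alpha|\right)^q.
\end{equation*}
I will compute this upper bound for two different choices of $\alpha$ and combine the results. The common thread in both computations is that $u$ is affine on $\hat{I}$ with range $[a_I, b_I]$, so $v = u^{p/q}$ lies in $[a_I^{p/q}, b_I^{p/q}]$ on $\hat{I}$, and Lemma~\ref{taylor}, after pulling out the common factor $2^{i^2/q}$, yields the oscillation bound $b_I^{p/q} - a_I^{p/q} \leq 2 \cdot 2^{i^2/q}\, i^{-1/p}$. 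This is the sole source of the $i^{-q/p}$ factor in the claim.

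\emph{First choice, $\alpha := a_I^{p/q}$.} Split $\int_J |v-\alpha| = \int_{J\cap\hat{I}} |v-\alpha| + \int_{J\setminus\hat{I}} |v-\alpha|$. On the first piece use the oscillation bound above; on the second use $|v - \alpha| \leq v + b_I^{p/q}$. Raising to the $q$-th power with $(x+y+z)^q \leq 3^{q-1}(x^q+y^q+z^q)$, applying $|A|^q \leq |J|^{q-1}|A|$ for $A \in \{J\cap\hat{I},\, J\setminus\hat{I}\}$, and absorbing $b_I^p \leq 2^p\, 2^{i^2}$ (from the lemma just before the statement) into the constant, one arrives at the desired estimate with $|J \setminus \hat{I}|$ filling the middle slot.

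\emph{Second choice, $\alpha := 0$.} Split analogously and use $v \leq b_I^{p/q}$ on $J\cap\hat{I}$; this yields
\begin{equation*}
F(J) \leq c\left[|J|^{1-q}\left(\int_{J\setminus\hat{I}} v\right)^q + |J\cap\hat{I}|\, b_I^p\right],
\end{equation*}
which has $|J\cap\hat{I}|$ in the middle slot but no third term.

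To finish, I split into two cases. If $|J \setminus \hat{I}| \leq |J \cap \hat{I}|$, the first choice already matches the target bound, since the minimum equals $|J\setminus\hat{I}|$. Otherwise, the second choice gives a bound strictly smaller than the target, because the third term in the statement is nonnegative and can be added for free. The main difficulty is really in getting the sharp factor $i^{-q/p}$ in the third term rather than the cruder $b_I^p$: the latter would explode as $i \to \infty$ and destroy the summability over $\mathcal{J}$ needed for the eventual conclusion $v \in JN_q(Q_0)$. Lemma~\ref{taylor} is precisely what supplies this refinement, and using two separate constants $\alpha$ is what produces the minimum cleanly.
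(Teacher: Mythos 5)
Your proof is correct, and it reaches the stated bound with $c=c(q)$: the choice $\alpha=a_I^{p/q}$ together with Lemma \ref{taylor} yields the three-term bound with $|J\setminus\hat I|\,b_I^p$ in the middle slot, the choice $\alpha=0$ yields the two-term bound with $|J\cap\hat I|\,b_I^p$, and the dichotomy on which piece of $J$ is smaller correctly produces the minimum (adding the nonnegative third term for free in the second case). The route differs from the paper's in how $v_J$ is handled and how the minimum arises. The paper keeps the true mean: it writes $v_J=\frac{1}{|J|}\int_{J\cap\hat I}v+\frac{1}{|J|}\int_{J\setminus\hat I}v$, expands $\int_J|v-v_J|$ exactly, and then distinguishes whether the skewed average $\frac{1}{|J|}\int_{J\cap\hat I}v$ lies below $v$ on all of $J\cap\hat I$ (a cancellation then removes the oscillation term entirely) or lies in $[a_I^{p/q},b_I^{p/q}]$ (then Lemma \ref{taylor} enters, exactly as in your first choice); the minimum in the middle term falls out automatically from the product $|J|^{1-q}\,|J|^{-q}\,|J\setminus\hat I|^q\,|J\cap\hat I|^q\le\min\big(|J\setminus\hat I|,|J\cap\hat I|\big)$, with no case split on the sizes of the two pieces. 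You instead pay a factor $2^q$ via Remark \ref{infversio} to replace $v_J$ by explicit test constants, which avoids the slightly delicate analysis of where the skewed average falls, at the cost of the size dichotomy and an extra constant; both approaches rest on the same decomposition of $J$ and on Lemma \ref{taylor} as the sole source of $i^{-q/p}$. One small remark: your clause about absorbing $b_I^p\le 2^p\,2^{i^2}$ into the constant is unnecessary (and, taken literally, would make the constant depend on $p$); the middle term of the target already carries $b_I^p$, so no such absorption is needed.
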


\begin{proof}
It is clear that
\begin{equation*}
v_J
= \frac{1}{|J|} \int_{J \cap \hat{I}} v + \frac{1}{|J|} \int_{J \setminus \hat{I}} v
\end{equation*}
and so we get from the triangle inequality
\begin{align*}
F(J)
&= |J|^{1-q} \left( \int_{J \cap \hat{I}} \left| v - v_J \right| + \int_{J \setminus \hat{I}} \left| v - v_J \right| \right)^q
\\
&\leq |J|^{1-q} \Bigg( \int_{J \cap \hat{I} } \left| v - \frac{1}{|J|} \int_{J \cap \hat{I}} v \right| + \frac{|J \cap \hat{I}|}{|J|} \int_{J \setminus \hat{I}} v
\\
&+ \int_{J \setminus \hat{I}} v + \frac{|J \setminus \hat{I}|}{|J|} \int_{J \cap \hat{I}} v + \frac{|J \setminus \hat{I}|}{|J|} \int_{J \setminus \hat{I}} v \Bigg)^q.
\end{align*}
Define $ \alpha := \frac{1}{|J|} \int_{J \cap \hat{I}} v$. If $ \alpha \leq v(x)$ for all $x \in J \cap \hat{I}$, then $F(J)$ is bounded by
\begin{align*}
&|J|^{1-q} \left( \int_{J \cap \hat{I}} v - \frac{|J \cap \hat{I}|}{|J|} \int_{J \cap \hat{I}} v + 2 \int_{J \setminus \hat{I}} v + \frac{|J \setminus \hat{I}|}{|J|} \int_{J \cap \hat{I}} v \right)^q
\\
= &2^q |J|^{1-q} \left( \int_{J \setminus \hat{I}} v + \frac{|J \setminus \hat{I}|}{|J|} \int_{J \cap \hat{I}} v \right)^q
\\
\leq &c \left[ |J|^{1-q} \left( \int_{J \setminus \hat{I}} v \right)^q + |J|^{1-q} \frac{|J \setminus \hat{I}|^q}{|J|^q} |J \cap \hat{I}|^q \left( b_I^{p/q} \right)^q \right]
\\
\leq &c \left[ |J|^{1-q} \left( \int_{J \setminus \hat{I}} v \right)^q + \min \big( |J \setminus \hat{I}|,|J \cap \hat{I}| \big) b_I^p \right].
\end{align*}
On the other hand if
$\alpha \geq \inf\{ v(x) : x \in J \cap \hat{I} \}$, then we know that $a_i^{p/q} \leq \alpha \leq b_i^{p/q}$ and $a_i^{p/q} \leq v \leq b_i^{p/q}$ in the interval $J \cap \hat{I}$ and so
\begin{align*}
\int_{J \cap \hat{I}} \left| v - \alpha \right|
&\leq \int_{J \cap \hat{I}} b_I^{p/q} - a_I^{p/q}
= | J \cap \hat{I} | \left( b_I^{p/q} - a_I^{p/q} \right)
\\
&= | J \cap \hat{I} | \left( \left( 1 + i^{-1/p} \right)^{p/q} - \left( 1 - i^{-1/p} \right)^{p/q} \right) \left( 2^{i^2/p} \right)^{p/q}
\\
&\leq | J \cap \hat{I} | \cdot 2 \cdot i^{-1/p} \cdot 2^{i^2/q}.
\end{align*}
Here we used Lemma \ref{taylor} to get the final inequality. Then we get
\begin{align*}
F(J)
&\leq |J|^{1-q} \left( | J \cap \hat{I} | \cdot 2 \cdot i^{-1/p} \cdot 2^{i^2/q} + 2 \int_{J \setminus \hat{I}} v + \frac{|J \setminus \hat{I}|}{|J|} \int_{J \cap \hat{I}} v \right)^q
\\
&\leq c |J|^{1-q} |J \cap \hat{I}|^{q} \cdot i^{-q/p} \cdot 2^{i^2} + c |J|^{1-q} \left( \int_{J \setminus \hat{I}} v \right)^q
\\
&+ c |J|^{1-q} \frac{|J \setminus \hat{I}|^q}{|J|^q} |J \cap \hat{I}|^q \left( b_I^{p/q} \right)^q
\\
&\leq c \left[|J|^{1-q} \left( \int_{J \setminus \hat{I}} v \right)^q + \min \big( |J \setminus \hat{I}|,|J \cap \hat{I}| \big) b_I^p + | J \cap \hat{I} | i^{-q/p} \cdot 2^{i^2} \right].
\end{align*}
Hence in all cases the lemma is true.
\end{proof}

\begin{proposition}
\label{containprop}
If $J$ is contained, then
\begin{equation*}
F(J)
\leq c(q) |J| \cdot i^{-q/p} \cdot 2^{i^2}
\end{equation*}
and consequently
\begin{equation*}
\sum_{\substack{J \in \mathcal{J} \\ J \text{  contained  }}} F(J)
\leq c_1(p,q)
< \infty .
\end{equation*}
\end{proposition}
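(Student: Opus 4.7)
The pointwise bound is immediate from Lemma \ref{uusifjraja}. Since $J$ is contained means $J \subset \hat{I}_J$, we have $|J \setminus \hat{I}_J| = 0$, so the first two terms on the right-hand side of Lemma \ref{uusifjraja} vanish: the integral $\int_{J \setminus \hat{I}_J} v$ is zero, and the minimum $\min(|J \setminus \hat{I}_J|, |J \cap \hat{I}_J|)$ is zero. What remains is exactly
\begin{equation*}
F(J) \leq c \,|J \cap \hat{I}_J|\, i^{-q/p} \cdot 2^{i^2} = c\,|J|\, i^{-q/p} \cdot 2^{i^2},
\end{equation*}
where $|I_J| = 2^{-i}$. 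This gives the first claim.

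For the sum, I would partition the contained intervals of $\mathcal{J}$ according to the level $i$ of $I_J$. Since the intervals in $\mathcal{J}$ are pairwise disjoint and each contained $J$ with $|I_J| = 2^{-i}$ lies inside one of the intervals $\hat{I}$ at level $i$, the total Lebesgue measure occupied at level $i$ satisfies
\begin{equation*}
\sum_{\substack{J \in \mathcal{J} \text{ contained}\\ |I_J|=2^{-i}}} |J| \;\leq\; \sum_{|I|=2^{-i}} |\hat{I}| \;=\; 2^{i-1}\, l_i \;=\; 2^{i-1}\cdot 2^{-(i+1/2)^2},
\end{equation*}
since there are $2^{i-1}$ dyadic intervals $I \subset [0,1/2)$ of length $2^{-i}$.

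Combining the two estimates and summing over $i$,
\begin{equation*}
\sum_{\substack{J \in \mathcal{J} \\ J \text{ contained}}} F(J)
\;\leq\; c \sum_{i=1}^\infty i^{-q/p} \cdot 2^{i^2} \cdot 2^{i-1} \cdot 2^{-(i+1/2)^2}
\;=\; c \cdot 2^{-5/4} \sum_{i=1}^\infty i^{-q/p},
\end{equation*}
since the exponent simplifies as $i^2 + (i-1) - (i+1/2)^2 = -5/4$. The remaining series converges precisely because $q/p > 1$, which holds by our standing assumption $q > p$. The only non-routine point is recognizing that the main bound really does depend on the ratio $q/p$ strictly exceeding one (hence converting a convergent/divergent dichotomy into the reason the argument succeeds for $q > p$ but would fail at $q = p$); everything else is bookkeeping.
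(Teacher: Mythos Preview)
Your proof is correct and essentially identical to the paper's own argument: both observe that containment kills the first two terms in Lemma~\ref{uusifjraja}, then group the contained $J$'s by level $i$, bound their total length by $2^{i-1}l_i$, and reduce to the convergent series $\sum_{i\ge 1} i^{-q/p}$. Your closing remark that convergence hinges on $q/p>1$ (and would fail at $q=p$) matches the paper's observation immediately following the proposition.
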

\begin{proof}
In this case the first two terms in Lemma \ref{uusifjraja} are just zero, and so we get the bound for $F(J)$.
Then it follows that
\begin{align*}
\sum_{\substack{J \in \mathcal{J} \\ J \text{  contained  }}} F(J)
&= \sum_{I \text{  dyadic  }} \sum_{\substack{J \in \mathcal{J} \\ J \subset \hat{I} }} F(J)
\leq \sum_{i=1}^{\infty} \sum_{ \substack{I \text{  dyadic  } \\ |I| = 2^{-i}}} \sum_{\substack{J \in \mathcal{J} \\ J \subset \hat{I} }} c |J| \cdot i^{-q/p} \cdot 2^{i^2}
\\
&\leq \sum_{i=1}^{\infty} \sum_{ \substack{I \text{  dyadic  } \\ |I| = 2^{-i}}} c \cdot l_i \cdot i^{-q/p} \cdot 2^{i^2}
\\
&= c \sum_{i=1}^{\infty} 2^{i-1} \cdot 2^{-i^2-i} \cdot i^{-q/p} \cdot 2^{i^2}
\\
&= c \sum_{i=1}^{\infty} i^{-q/p}
= c_1(p,q)
< \infty.
\end{align*}
This is an over-harmonic series, so it converges.
\end{proof}

In Proposition \ref{containprop} above we had a convergence specifically because $q > p$. If we had $p = q$, the corresponding series would diverge as was seen in Proposition \ref{räjähdys}.

We move on to the other cases where $|J \setminus \hat{I}_J| > 0$. In these cases $J$ must intersect the boundary of the interval $\hat{I}_J$ and so for each $I$ there are at most two intervals $J$ such that $I = I_J$ and $|J \setminus \hat{I}| > 0$.

\begin{proposition}
\label{shortprop}
If $J$ is short, then
\begin{equation*}
F(J)
\leq c(p,q) \left( 2^{-2i} + 2^{-i} \cdot i^{-q/p} \right)
\end{equation*}
and consequently
\begin{equation*}
\sum_{\substack{J \in \mathcal{J} \\ J \text{  short  } }} F(J)
\leq c_2(p,q)
< \infty .
\end{equation*}
\end{proposition}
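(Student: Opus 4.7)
The plan is to apply Lemma \ref{uusifjraja} directly, exploiting the fact that ``short'' means $J\setminus\hat{I}$ does not even reach the nearest neighboring interval $\hat{I}'$. Since the nearest such neighbor lies at distance at least $\delta_i$ from $\hat{I}=\hat{I}_J$, and $|J\setminus\hat{I}|\le\delta_i$, the set $J\setminus\hat{I}$ is disjoint from $\supp v\setminus\hat{I}$. Hence
\begin{equation*}
\int_{J\setminus\hat{I}} v = 0,
\end{equation*}
which kills the first term in Lemma \ref{uusifjraja}.

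For the remaining two terms I would use the parameter bounds collected in the preliminary lemma. Writing $|I|=2^{-i}$, the middle term satisfies
\begin{equation*}
\min(|J\setminus\hat{I}|,|J\cap\hat{I}|)\,b_I^{p} \le \delta_i\, b_I^{p} \le d_i\cdot 2^p 2^{i^2} = 2^p\cdot 2^{-(i+1)^2+i^2} \le c(p)\,2^{-2i},
\end{equation*}
and the last term satisfies
\begin{equation*}
|J\cap\hat{I}|\, i^{-q/p}\cdot 2^{i^2} \le l_i\, i^{-q/p}\cdot 2^{i^2} = 2^{-(i+1/2)^2+i^2}\, i^{-q/p} \le c\, 2^{-i}\,i^{-q/p}.
\end{equation*}
This gives the pointwise bound $F(J)\le c(p,q)(2^{-2i}+2^{-i}i^{-q/p})$ claimed in the proposition.

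For the summation, I would note that (as remarked just before the proposition) for each dyadic $I$ there are at most two intervals $J\in\mathcal{J}$ with $I_J=I$ and $|J\setminus\hat{I}|>0$, and in particular at most two short such $J$. Since there are exactly $2^{i-1}$ dyadic subintervals of $[0,\tfrac12)$ of length $2^{-i}$, summing over $J$ short yields
\begin{equation*}
\sum_{\substack{J\in\mathcal{J}\\ J\text{ short}}} F(J) \le \sum_{i=1}^{\infty} 2\cdot 2^{i-1}\cdot c(p,q)\bigl(2^{-2i}+2^{-i}i^{-q/p}\bigr) = c(p,q)\sum_{i=1}^{\infty}\bigl(2^{-i}+i^{-q/p}\bigr),
\end{equation*}
which is finite because $q/p>1$, so $\sum i^{-q/p}$ converges (and $\sum 2^{-i}$ trivially does). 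The only subtle point — and the reason $q>p$ must be invoked precisely here, exactly as in the contained case — is convergence of the over-harmonic tail $\sum i^{-q/p}$; the geometric tail $\sum 2^{-i}$ coming from the ``vertical jump'' at the boundary of $\hat{I}$ is automatic and was already present in the construction for $g$ in \cite{nontriviality}.
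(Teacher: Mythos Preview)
Your proof is correct and follows essentially the same approach as the paper: both apply Lemma \ref{uusifjraja}, observe that $v\equiv 0$ on $J\setminus\hat{I}$ for short $J$ so the first term vanishes, bound the remaining two terms by $\delta_i b_I^p$ and $l_i\,i^{-q/p}2^{i^2}$ respectively, and then sum using the count of at most $2\cdot 2^{i-1}$ short intervals per level together with $q/p>1$.
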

\begin{proof}
In this case $v$ is zero in $J \setminus \hat{I}$ and so the first term in Lemma \ref{uusifjraja} is just zero. Then we get
\begin{align*}
F(J)
&\leq c \left[ \min \big( |J \setminus \hat{I}|,|J \cap \hat{I}| \big) b_I^p + | J \cap \hat{I}| \cdot i^{-q/p} \cdot 2^{i^2} \right]
\\
&\leq c \left[ \delta_I b_I^p + l_I \cdot i^{-q/p} \cdot 2^{i^2} \right]
\\
&\leq c \left( 2^{-i^2 - 2i - 1} \cdot 2^p \cdot 2^{i^2} + 2^{-i^2 - i - \frac{1}{4}} \cdot i^{-q/p} \cdot 2^{i^2} \right)
\\
&\leq c(p,q) \left( 2^{-2i} + 2^{- i} \cdot i^{-q/p} \right).
\end{align*}
Using this bound we get
\begin{align*}
\sum_{\substack{J \in \mathcal{J} \\ J \text{  short  } }} F(J)
&\leq \sum_{i=1}^{\infty} 2^{i-1} \cdot 2 \cdot c \left( 2^{-2i} + 2^{- i} \cdot i^{-q/p} \right)
\\
&= \sum_{i=1}^{\infty} c 2^{-i} + \sum_{i=1}^{\infty} c i^{-q/p}
= c_2(p,q) < \infty.
\end{align*}
\end{proof}

\begin{proposition}
\label{mediumprop}
If $J$ is medium, then
\begin{equation*}
F(J)
\leq c(p,q) \left( 2^{-iq} + 2^{-2i} + 2^{-i} \cdot i^{-q/p} \right)
\end{equation*}
and consequently
\begin{equation*}
\sum_{\substack{J \in \mathcal{J} \\ J \text{  medium  } }} F(J)
\leq c_3(p,q)
< \infty .
\end{equation*}
\end{proposition}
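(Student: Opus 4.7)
The plan is to invoke Lemma~\ref{uusifjraja} and estimate each of its three terms in turn, producing the three summands in the asserted pointwise bound. In the medium case the key geometric inputs are $|J| > |J \setminus \hat{I}| > \delta_I \geq \tfrac{1}{2} d_I$ and $|J \setminus \hat{I}| \leq 2 D_I \leq 6 d_i$, where $|I| = 2^{-i}$.

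For the first term $|J|^{1-q}\bigl(\int_{J \setminus \hat{I}} v\bigr)^q$, I would begin with a geometric observation. Since $|J \setminus \hat{I}| \leq 6 d_i$ and every non-descendant interval $\hat{I'}$ sits at distance at least $l_{i-1} \gg d_i$ from $\hat{I}$ (compare $l_{i-1} = 2^{-(i-1/2)^2}$ with $d_i = 2^{-(i+1)^2}$), the set $J \setminus \hat{I}$ can only meet intervals $\hat{I'}$ indexed by proper dyadic descendants $I' \subsetneq I$. Granting this, the second bound of Lemma~\ref{ulkopuolisenyläraja} yields $\int_{J \setminus \hat{I}} v \leq c\, 2^{-(i+1)^2/q' - i}$, and combining this with $|J|^{1-q} \leq c\, 2^{(i+1)^2 (q-1)}$ and the identity $q/q' = q - 1$, the $(i+1)^2$-scale factors cancel and leave $c\, 2^{-iq}$.

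For the second term, the trivial bound $\min(|J \setminus \hat{I}|, |J \cap \hat{I}|) \leq |J \setminus \hat{I}| \leq 6 d_i$ combined with $b_I^p \leq 2^p \cdot 2^{i^2}$ produces the $c\, 2^{-2i}$ summand. For the third term, $|J \cap \hat{I}| \leq l_i = 2^{-(i+1/2)^2}$ multiplied by $2^{i^2}$ gives $c\, 2^{-i} i^{-q/p}$. This completes the pointwise estimate.

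For the summation step, I would use the already-observed fact that for each dyadic $I$ there are at most two $J \in \mathcal{J}$ with $I_J = I$ and $|J \setminus \hat{I}| > 0$, in particular at most two medium $J$'s, together with the count $2^{i-1}$ of dyadic intervals of length $2^{-i}$ inside $[0, \tfrac{1}{2})$. This dominates the sum by $\sum_{i=1}^\infty 2^i \bigl(2^{-iq} + 2^{-2i} + 2^{-i} i^{-q/p}\bigr)$; the first two pieces are convergent geometric series because $q > 1$, and the third converges thanks to $q/p > 1$, which is exactly the standing hypothesis $q > p$. The main obstacle I anticipate is the geometric verification at the start of the first-term bound — namely that a medium $J$ cannot reach past $\hat{I}$'s own descendant cluster — since once this is clean, the rest reduces to bookkeeping with the explicit exponents in $l_i, d_i, a_i, b_i$.
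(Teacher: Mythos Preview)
Your proposal is correct and follows essentially the same route as the paper: invoke Lemma~\ref{uusifjraja}, control the first term via Lemma~\ref{ulkopuolisenyläraja} together with the lower bound $|J| > \delta_I \geq \tfrac12 d_i$, and bound the remaining two terms just as in the short case using $|J\setminus\hat I|\le 6d_i$ and $|J\cap\hat I|\le l_i$; the summation step is identical. One small correction in your geometric justification: the nearest non-descendant of $\hat I$ is the parent $\hat I^{*}$ at distance $d_{i-1}=2^{-i^2}$, not $l_{i-1}$, but since $d_{i-1} > 6d_i$ for every $i\ge 1$ (and $J$ cannot meet $\hat I^{*}$ anyway, as that would contradict $I_J=I$), your conclusion that $J\setminus\hat I$ meets only descendants of $I$ is unaffected.
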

\begin{proof}
In this case the bounds for the second and third term in Lemma \ref{uusifjraja} are essentially the same as in Proposition \ref{shortprop} because $|J \setminus \hat{I}| \leq 2 D_I \leq 6 d_I$. For the first term we use Lemma \ref{ulkopuolisenyläraja} to get
\begin{equation*}
\int_{J \setminus \hat{I}} v
\leq \sum_{\substack{I' \subsetneq I \\ I' \text{  dyadic  }}} \int_{\hat{I'}} v
\leq c 2^{-(i+1)^2/q' - i}.
\end{equation*}
Then we use the fact that $|J| \geq |J \setminus \hat{I}| > \delta_I$ to get
\begin{align*}
|J|^{1-q} \left( \int_{J \setminus \hat{I} } v \right)^q
&\leq \delta_I^{1-q} \left( c 2^{-(i+1)^2/q' - i} \right)^q
\\
&\leq c \left( 2^{-i^2 - 2i - 1} \right)^{1-q} \cdot 2^{-(i+1)^2(q-1)} \cdot 2^{-iq}
\\
&\leq c 2^{-iq}
\end{align*}
and so we have the bound for $F(J)$.
This implies that
\begin{align*}
\sum_{\substack{J \in \mathcal{J} \\ J \text{  medium  } }} F(J)
&\leq \sum_{i=1}^{\infty} 2^{i-1} \cdot 2 c \left( 2^{-iq} + 2^{-2i} + 2^{-i} \cdot i^{-q/p} \right)
\\
&= \sum_{i=1}^{\infty} c 2^{(1-q)i} + \sum_{i=1}^{\infty} c 2^{-i} + \sum_{i=1}^{\infty} c i^{-q/p}
\\
&= c_3(p,q) < \infty.
\end{align*}
\end{proof}

\begin{proposition}
\label{carlesonprop}
If $J$ is long, then
\begin{equation*}
F(J)
\leq c(p,q) |I_J|
\end{equation*}
and the corresponding intervals $I$ form a Carleson family, in the sense that
\begin{equation*}
\sum_{\substack{ I \text{  dyadic  } \\ \exists J \text{  long s.t.  } I = I_J }} |I|
\leq 1.
\end{equation*}
Therefore
\begin{equation*}
\sum_{\substack{J \in \mathcal{J} \\ J \text{  long  } }} F(J)
\leq c_4(p,q)
< \infty.
\end{equation*}
\end{proposition}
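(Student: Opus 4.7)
The plan is to split into a pointwise bound $F(J) \leq c(p,q) |I_J|$ for each long $J$, and a Carleson-type packing estimate $\sum_{I \in \mathcal{C}} |I| \leq 1$, where $\mathcal{C} = \{ I : \exists J \text{ long with } I = I_J \}$. Combining these with the observation noted above Proposition \ref{shortprop} — that at most two $J \in \mathcal{J}$ can share a given $I_J$ once $|J \setminus \hat{I}| > 0$ — will then yield the desired sum bound $c_4(p,q)$.

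For the pointwise bound I would apply Lemma \ref{uusifjraja}. The second and third terms are immediate from $|J \cap \hat{I}| \leq l_i = 2^{-(i+1/2)^2}$ and $b_I^p \leq 2^p \cdot 2^{i^2}$; both reduce to multiples of $l_i \cdot 2^{i^2} = 2^{-i - 1/4}$, comfortably bounded by $c \cdot 2^{-i} = c|I_J|$. The first term is more delicate. Here the key geometric observation is that, by the recursive tree construction, every non-descendant $\hat{I}'$ of $\hat{I}$ is shielded from $\hat{I}$ by some wider ancestor's interval; since $J$ cannot meet any $\hat{I}''$ with $|I''| > |I|$, the set $J \setminus \hat{I}$ can intersect $\hat{I}'$ only for descendants $I' \subsetneq I$. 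Lemma \ref{ulkopuolisenyläraja} then gives $\int_{J \setminus \hat{I}} v \leq c \cdot 2^{-(i+1)^2/q' - i}$, and combining with the lower bound $|J| \geq 2 D_I \geq 2 d_I = 2 \cdot 2^{-(i+1)^2}$ in the prefactor and the identity $q/q' = q-1$, one obtains
\[
|J|^{1-q} \left( \int_{J \setminus \hat{I}} v \right)^q \leq c \cdot 2^{(q-1)(i+1)^2 - (q-1)(i+1)^2 - iq} = c \cdot 2^{-iq} \leq c \cdot 2^{-i},
\]
so the first term is also bounded by $c|I_J|$, and $F(J) \leq c(p,q) |I_J|$ follows.

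For the Carleson estimate, I would follow the geometric packing approach used in the analogous step in \cite{nontriviality}: a long $J$ with $I_J = I$ at level $i$ is forced by $|J \setminus \hat{I}| > 2 D_I$ to reach beyond the entire subtree rooted at $I$, while still lying inside the "room" available between that subtree and the nearest wider ancestor $\hat{I}'$ that $J$ cannot cross. The disjointness of the collection $\mathcal{J}$ then provides enough separation between long $J$'s associated to different $I$'s to force the packing $\sum_{I \in \mathcal{C}} |I| \leq 1$. I expect this Carleson estimate to be the main obstacle of the proof: the pointwise bound is essentially a direct calculation once the descendants-only observation is made, whereas the Carleson estimate is a purely combinatorial/geometric packing statement about the recursive tree. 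Once both pieces are in place, summing $F(J) \leq c|I_J|$ over the at most two long $J$'s per $I$ and then over $I \in \mathcal{C}$ immediately gives $\sum_{J \text{ long}} F(J) \leq 2c \leq c_4(p,q)$.
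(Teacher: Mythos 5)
Your proposal is correct and follows essentially the same route as the paper: the pointwise bound comes from Lemma \ref{uusifjraja} with the first term handled exactly as in the medium case (using $|J|\geq |J\setminus \hat{I}| > 2D_I \geq 2d_I$ and Lemma \ref{ulkopuolisenyläraja} to get $c\,2^{-iq}$) and the other two terms bounded via $|J\cap\hat{I}|\leq l_i$ by $c\,2^{-i}$, while the Carleson packing is obtained, as in the paper, by the argument of \cite[Lemma 3.9]{nontriviality}, and the final sum uses that at most two long $J$ share a given $I_J$. No essential differences to report.
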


\begin{proof}
The proof of the corresponding intervals forming a Carleson family is essentially the same as the proof of \cite[Lemma 3.9]{nontriviality}.

To get the bound for $F(J)$ we use Lemma \ref{uusifjraja}. First we notice that the bound for the first term is essentially the same as in Proposition \ref{mediumprop}, because $|J| \geq |J \setminus \hat{I}| > 2 D_I \geq 2 d_I$. For the second and third term we use the fact that $|J \cap \hat{I}| \leq |\hat{I}| = l_I$. Then we get
\begin{align*}
F(J)
&\leq c \left[ c 2^{-iq} + l_I b_I^p + l_I \cdot i^{-q/p} \cdot 2^{i^2} \right]
\\
&\leq c \left[ 2^{-iq} + 2^{-i^2 - i - \frac{1}{4}} \cdot c 2^{i^2} + 2^{-i^2 - i - \frac{1}{4}} \cdot 2^{i^2} \right]
\\
&\leq c \left[2^{-iq} + 2^{-i} \right]
\leq c(p,q) |I|.
\end{align*}

Let us say that a dyadic interval $I$ is long, if it is of the form $I_J$ for some long $J$.
Then we conclude that
\begin{equation*}
\sum_{\substack{J \in \mathcal{J} \\ J \text{  long  } }} F(J)
= \sum_{ I \text{  long  } }
\sum_{\substack{ J \text{  long  } \\ I = I_J }} F(J)
\leq \sum_{I \text{  long  }} 2 c |I|
\leq 2c
= c_4(p,q)
< \infty.
\end{equation*}
\end{proof}

Now we are ready to prove the following proposition, which also concludes the proof of Theorem \ref{päätulos}.
\begin{proposition}
$v = f^{1/q} \in JN_q(Q_0)$.
\end{proposition}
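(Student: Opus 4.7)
The plan is to take an arbitrary countable collection $\mathcal{J}$ of pairwise disjoint subintervals of $Q_0$ and bound $\sum_{J \in \mathcal{J}} F(J)$ by a constant depending only on $p$ and $q$. Once this bound is established, taking the supremum over all such collections and combining it with the $L^1(Q_0)$-integrability of $v$ from Lemma \ref{ulkopuolisenyläraja} yields $v \in JN_q(Q_0)$.

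First, I would discard those $J$ that do not meet any of the intervals $\hat{I}$: on such a $J$ we have $v \equiv 0$, so $F(J) = 0$ and these terms contribute nothing. For each of the remaining $J$, the set of levels $i$ at which some $\hat{I}$ of level $i$ meets $J$ is a nonempty subset of $\mathbb{Z}_+$ and therefore has a minimum, so the widest such interval $\hat{I}_J$ is well defined. This assigns each remaining $J$ to exactly one of the four classes introduced in Definition \ref{containedshortmediumlong}: contained, short, medium, or long.

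The bulk of the argument then consists of invoking the four bounds already proven. Propositions \ref{containprop}, \ref{shortprop}, \ref{mediumprop}, and \ref{carlesonprop} respectively produce uniform constants $c_1, c_2, c_3, c_4$, each depending only on $p$ and $q$, bounding the subsum of $F(J)$ over each of the four classes. Adding these four estimates yields
\begin{equation*}
\sum_{J \in \mathcal{J}} F(J) \leq c_1(p,q) + c_2(p,q) + c_3(p,q) + c_4(p,q),
\end{equation*}
a bound independent of the family $\mathcal{J}$. Taking the supremum over all admissible collections then gives $\|v\|_{JN_q(Q_0)}^q \leq c(p,q) < \infty$, as required.

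The main substantive obstacles — controlling the oscillation inside, near, and far from each tower — have already been absorbed into the preceding lemmas, in particular the case analysis of Lemma \ref{uusifjraja}, the Taylor-type estimate of Lemma \ref{taylor}, the $L^1$-tail control of Lemma \ref{ulkopuolisenyläraja}, and the Carleson-family argument in the long case. At this stage the proof is purely an assembly step. The single point worth emphasising is that the convergence in Proposition \ref{containprop} hinges on $\sum_i i^{-q/p}$ being an over-harmonic series, which is exactly where the strict inequality $q > p$ is used; contrast this with Proposition \ref{räjähdys}, where the same series becomes the divergent harmonic series at $q = p$ and destroys membership in $JN_p$.
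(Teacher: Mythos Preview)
Your proposal is correct and follows essentially the same route as the paper's own proof: both split the collection $\mathcal{J}$ into the four classes of Definition \ref{containedshortmediumlong}, invoke Propositions \ref{containprop}--\ref{carlesonprop} to bound each subsum by a constant $c_i(p,q)$, sum, and take the supremum. Your write-up is slightly more explicit in justifying why $I_J$ is well defined and in recalling the $L^1$-integrability of $v$ from Lemma \ref{ulkopuolisenyläraja}, and your closing remark on the role of $q>p$ is accurate; otherwise the argument matches the paper's.
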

\begin{proof}
This follows from Propositions \ref{containprop}, \ref{shortprop}, \ref{mediumprop} and \ref{carlesonprop}. If we take any partition $\mathcal{J}$ of $Q_0$ into disjoint subintervals $J \in \mathcal{J}$, then
\begin{align*}
\sum_{J \in \mathcal{J}} |J| \left( \fint_{J} \left| v - v_J \right| \right)^q
&= \sum_{\substack{J \in \mathcal{J} \\ J \text{  contained  }}} F(J) + \sum_{\substack{J \in \mathcal{J} \\ J \text{  short  }}} F(J)
\\
&+ \sum_{\substack{J \in \mathcal{J} \\ J \text{  medium  }}} F(J) + \sum_{\substack{J \in \mathcal{J} \\ J \text{  long  }}} F(J)
\\
&\leq c_1 + c_2 + c_3 + c_4
< \infty.
\end{align*}
As the estimate does not depend on $\mathcal{J}$, we can conclude that $\left\| v \right\|_{JN_q(Q_0)}^q < \infty$
and therefore $v \in JN_q(Q_0)$.
\end{proof}

The function $f$ can be extended from the one-dimensional case $Q_0 \subset \mathbb{R}$ into the multidimensional case $Q_0 \subset \mathbb{R}^n$ with an arbitrary $n$, by repeatedly using Proposition \ref{laajennus}.

\begin{corollary}
Let $Q_0 \subset \mathbb{R}^n$ be a bounded cube and let $1 < p < \infty$. Then there exists a nonnegative function $f$ such that $f^{1/q} \notin JN_q(Q_0)$ whenever $1 < q \leq p$, and $f^{1/q} \in JN_q(Q_0)$ for every $q > p$.
\end{corollary}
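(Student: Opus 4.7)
The plan is to bootstrap the one-dimensional construction of Theorem \ref{päätulos} into $\mathbb{R}^n$ by iterating Proposition \ref{laajennus} in the remaining $n-1$ coordinate directions, using the crucial observation that trivial extension commutes with taking roots.

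First, by an affine change of variables, it suffices to produce the desired $f$ on $Q_0 = [0,1]^n$: any bounded cube in $\mathbb{R}^n$ is an affine image of $[0,1]^n$, and a direct change of variables in Definition \ref{jnpmaaritelma} shows that $JN_q$ membership is invariant under translations and dilations (the norm scales by a power of the dilation factor). Let $f_1 \colon [0,1] \to [0,\infty)$ be the function given by Theorem \ref{päätulos}, so that $f_1^{1/q} \notin JN_q([0,1])$ for $1 < q \leq p$ and $f_1^{1/q} \in JN_q([0,1])$ for $q > p$. Inductively define $f_{k+1} \colon [0,1]^{k+1} \to [0,\infty)$ to be the trivial extension $f_{k+1}(x,t) := f_k(x)$ of $f_k$ in a fresh coordinate direction, for $k = 1, \dots, n-1$, and set $f := f_n$.

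The key observation is that trivial extension commutes with taking any positive power: for every $r > 0$,
\begin{equation*}
(f_{k+1})^{r}(x,t) = (f_k(x))^{r} = \widetilde{(f_k^{r})}(x,t).
\end{equation*}
Hence $f^{1/q} = f_n^{1/q}$ is precisely the $(n-1)$-fold iterated trivial extension of $f_1^{1/q}$. Applying Proposition \ref{laajennus} once per coordinate yields the equivalence
\begin{equation*}
f_1^{1/q} \in JN_q([0,1]) \quad \Longleftrightarrow \quad f^{1/q} \in JN_q([0,1]^n),
\end{equation*}
which holds simultaneously for every $q \in (1,\infty)$. Combining this with the membership dichotomy of Theorem \ref{päätulos} produces the advertised function on $[0,1]^n$, which transfers to an arbitrary bounded cube $Q_0 \subset \mathbb{R}^n$ by the affine reduction.

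The argument is essentially bookkeeping: no new estimate is needed because Proposition \ref{laajennus} is a biconditional that preserves $JN_q$ membership separately for each exponent $q$. The only subtlety worth highlighting is the commutation of trivial extension with the root operation, which is immediate since the extension is constant along the new variable, but it is exactly this commutation that allows the single proposition to deliver the dichotomy for all $q$ simultaneously.
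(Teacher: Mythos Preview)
Your argument is correct and follows exactly the route the paper takes: iterate Proposition \ref{laajennus} in the remaining $n-1$ directions to lift the one-dimensional example of Theorem \ref{päätulos} to $[0,1]^n$, then pass to an arbitrary cube by dilation and translation. Your explicit remark that trivial extension commutes with taking $q$-th roots is precisely the (implicit) reason the paper's one-line justification works, so there is nothing substantively different here.
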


\section{The vanishing subspace $VJN_p$}
\label{vjnposio}

The vanishing subspace $VJN_p$ of $JN_p$ is defined analogously to the space of vanishing mean oscillation $VMO$ which is a subspace of $BMO$.

\begin{definition}[$VJN_p$]
Let $1 < p < \infty$ and $Q_0 \subset \mathbb{R}^n$ a bounded cube. Then $f \in VJN_p(Q_0)$ if $f \in JN_p(Q_0)$ and
\begin{equation*}
\lim_{a \to 0}
\sup_{\substack{Q_i \subset Q_0 \\  l(Q_i) \leq a}}
\sum_{i=1}^{\infty} |Q_i| \left( \fint_{Q_i} | f - f_{Q_i} | \right)^p
= 0
\end{equation*}
where the supremum is taken over all collections of pairwise disjoint subcubes $Q_i \subset Q_0$ such that the side length of each $Q_i$ is at most $a$.
\end{definition}
The following theorem is a characterization of $VJN_p(Q_0)$.
\begin{theorem}
\label{vjnplause}
\begin{equation*}
VJN_p(Q_0)
= \overline{C^{\infty}(Q_0)}^{JN_p(Q_0)}
\end{equation*}
where $C^{\infty}(Q_0)$ is the set of smooth functions in $\mathbb{R}^n$ that have been restricted to $Q_0$.
\end{theorem}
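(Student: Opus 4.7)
The plan is to prove the two inclusions separately. For the easy direction $\overline{C^\infty(Q_0)}^{JN_p(Q_0)} \subseteq VJN_p(Q_0)$, I would first check that $C^\infty(Q_0) \subseteq VJN_p(Q_0)$: any such $f$ is Lipschitz on $\overline{Q_0}$ with some constant $L$, so $\fint_Q |f - f_Q| \leq L\sqrt{n}\, l(Q)$, which gives
\begin{equation*}
\sum_i |Q_i| \left(\fint_{Q_i}|f - f_{Q_i}|\right)^p \leq (L\sqrt{n})^p a^p |Q_0| \longrightarrow 0 \quad \text{as } a \to 0.
\end{equation*}
Next, I would verify that $VJN_p(Q_0)$ is closed in $JN_p(Q_0)$. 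Since the $JN_p$ seminorm satisfies the triangle inequality (Minkowski applied to the outer $\ell^p$ sum), for $f_n \in VJN_p$ with $f_n \to f$ in $JN_p$ and any disjoint $(Q_i)$ with $l(Q_i) \leq a$,
\begin{equation*}
\left(\sum_i |Q_i| \left(\fint_{Q_i}|f - f_{Q_i}|\right)^p\right)^{1/p} \leq \|f - f_n\|_{JN_p} + \left(\sum_i |Q_i|\left(\fint_{Q_i}|f_n - (f_n)_{Q_i}|\right)^p\right)^{1/p},
\end{equation*}
and an $\eta/2$-argument concludes.

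For the nontrivial inclusion $VJN_p(Q_0) \subseteq \overline{C^\infty(Q_0)}^{JN_p(Q_0)}$, I would approximate $f \in VJN_p$ by mollifications. Extend $f$ to a slightly larger cube $\tilde{Q}_0 \supsetneq Q_0$ by repeated reflection across the faces, producing $\tilde{f}$, and set $f_\epsilon := \tilde{f} * \phi_\epsilon$ for a standard mollifier supported in $B(0,\epsilon)$. For $\epsilon$ small enough, $f_\epsilon \in C^\infty$ on $Q_0$. To show $\|f - f_\epsilon\|_{JN_p(Q_0)} \to 0$, fix a disjoint collection $(Q_i)$ in $Q_0$ and split it into small cubes ($l(Q_i) \leq a$) and large cubes ($l(Q_i) > a$). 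For small cubes, apply the pointwise triangle inequality together with $(a+b)^p \leq 2^{p-1}(a^p+b^p)$ to separate the contributions of $f$ and $f_\epsilon$. The $f$-part is controlled by the $VJN_p$ property of $f$. For the $f_\epsilon$-part, use the standard identity
\begin{equation*}
\fint_{Q_i}|f_\epsilon - (f_\epsilon)_{Q_i}| \leq \int \phi_\epsilon(y) \fint_{Q_i - y} |\tilde{f} - \tilde{f}_{Q_i - y}|\,dy
\end{equation*}
and Jensen's inequality to bound the resulting sum by an average (over $|y| \leq \epsilon$) of $JN_p$-type expressions for $\tilde{f}$ over translated collections. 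For large cubes, use $\fint_{Q_i}|(f - f_\epsilon) - (f - f_\epsilon)_{Q_i}| \leq 2\fint_{Q_i}|f - f_\epsilon|$ together with the inequality $\|\cdot\|_{\ell^p} \leq \|\cdot\|_{\ell^1}$ (valid for $p \geq 1$ with disjoint $Q_i$) to get
\begin{equation*}
\sum_{l(Q_i) > a} |Q_i| \left(\fint_{Q_i}|f - f_\epsilon|\right)^p \leq 2^p a^{-n(p-1)} \|f - f_\epsilon\|_{L^1(Q_0)}^p,
\end{equation*}
which vanishes as $\epsilon \to 0$ for each fixed $a$. Combining these, given $\eta > 0$, I would first pick $a$ small to handle the small cubes uniformly in $\epsilon$, and then $\epsilon$ small to handle the large cubes.

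The main obstacle is ensuring that $\tilde{f}$ and all its mollifications satisfy a vanishing-oscillation bound that is \emph{uniform} in $\epsilon$, so that the small-cube estimate for $f_\epsilon$ genuinely vanishes with $a$ independently of $\epsilon$. This reduces to verifying that the reflected extension $\tilde{f}$ inherits a suitable $VJN_p$-type control on $\tilde{Q}_0$, and that small translates $Q_i - y$ of subcubes of $Q_0$ can be absorbed (with bounded overlap coming from the reflection structure) into disjoint collections of subcubes of $\tilde{Q}_0$ for which $\tilde{f}$ has controlled mean oscillation. The large-cube estimate is soft and relies only on standard $L^1$-convergence of mollifiers.
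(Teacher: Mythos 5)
Your proposal cannot be compared line-by-line with the paper's argument, because the paper does not prove this theorem internally: it invokes Tao--Yang--Yuan (\cite[Theorem 5.3]{vjnplahde}) specialized to $JN_{(p,1,0)_0}(Q_0)=JN_p(Q_0)$. So yours is necessarily a different, self-contained route. The easy inclusion is fine: restrictions of smooth functions are Lipschitz on $\overline{Q_0}$, giving a bound $\lesssim a^p|Q_0|$ at scale $a$, and the scale-restricted functional obeys the triangle inequality, so $VJN_p(Q_0)$ is closed under $JN_p$-convergence. The large-cube/small-cube splitting of $\|f-f_\epsilon\|_{JN_p(Q_0)}$ is also sound, and note that in your small-cube estimate the translated family $(Q_i-y)$ is automatically disjoint (it is one common translation), so no overlap issue arises there.

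The genuine gap is precisely the step you flag and then leave as a ``reduction'': that the reflected extension $\tilde f$ satisfies the uniform vanishing control
\begin{equation*}
\lim_{a\to 0}\;\sup_{\substack{P_j\subset \tilde{Q}_0 \text{ disjoint}\\ l(P_j)\le a}}\;\sum_j |P_j|\left(\fint_{P_j}\big|\tilde f-\tilde f_{P_j}\big|\right)^p=0 .
\end{equation*}
Without this, the small-cube bound for $f_\epsilon$ is not uniform in $\epsilon$ and the final $\eta$-argument collapses; and it cannot be dispatched by a generic ``bounded overlap'' remark, because $JN_p$-type sums are taken over \emph{disjoint} families and there is no covering lemma to quote --- one must exhibit a decomposition into finitely many disjoint families of subcubes of $Q_0$. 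The reflection structure does provide one, but this has to be written out: reflect one coordinate direction at a time; cubes lying entirely in either copy transfer directly (by symmetry), while a small cube $P$ straddling the reflecting hyperplane has both of its halves carried, after reflection, into a single cube $\hat P\subset Q_0$ with $l(\hat P)=l(P)$, whence $\inf_c\fint_P|\tilde f-c|\le 2\inf_c\fint_{\hat P}|f-c|$ and so $\fint_P|\tilde f-\tilde f_P|\le 4\fint_{\hat P}|f-f_{\hat P}|$; moreover, disjoint straddling cubes have disjoint shadows on the hyperplane, so the associated cubes $\hat P$ form a disjoint family. This yields $V_{\tilde f}(a)\le c(p)\,V_f(a)\to 0$, and iterating over the $n$ directions handles the corner regions. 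Until this extension lemma (or an equivalent substitute) is actually proved, the inclusion $VJN_p(Q_0)\subseteq\overline{C^{\infty}(Q_0)}^{JN_p(Q_0)}$ --- the substantive half of the theorem --- is not established; with it, your mollification scheme does go through.
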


For the proof we refer to \cite[Theorem 5.3]{vjnplahde}. To be precise, that theorem applies to the so-called John-Nirenberg-Campanato spaces $JN_{(p,q,s)_{\alpha}}(X)$ that were studied by Tao et al. \cite{campanato}. Here $p \in (1,\infty)$, $q \in [1,\infty)$, $\alpha \in [0,\infty)$, $s$ is a nonnegative integer and $X$ is either $\mathbb{R}^n$ or a bounded cube $Q_0 \subset \mathbb{R}^n$. Brudnyi and Brudnyi studied the vanishing subspace of this John-Nirenberg-Campanato space in \cite[Theorem 2.6]{brudnyi}, though they used some different notation. For consistency of this article, we denote the space by $VJN_{(p,q,s)_{\alpha}}(X)$ .

By choosing $q=1$, $s=\alpha=0$ and $X = Q_0$, the John-Nirenberg-Campanato space $JN_{(p,q,s)_{\alpha}}(X)$ becomes the John-Nirenberg space, $JN_{(p,1,0)_{0}}(Q_0) = JN_p(Q_0)$. The same holds for the vanishing subspaces, i.e. $VJN_p(Q_0) = VJN_{(p,1,0)_{0}}(Q_0)$.
So by choosing the parameters appropriately, \cite[Theorem 5.3]{vjnplahde} simplifies into the form in Theorem \ref{vjnplause}.

It follows directly from Theorem \ref{vjnplause} that $L^p(Q_0) \subset VJN_p(Q_0)$, by density of $C^{\infty}$ in $L^p$ and by Equation 
(\ref{hölder}).
Thus it is immediate that
\begin{equation*}
L^p(Q_0)
\subseteq VJN_p(Q_0)
\subseteq JN_p(Q_0).
\end{equation*}

The obvious question is now if these inclusions are strict. 
We answer this question by examining two functions.
The counterexample presented in \cite{nontriviality} is in $JN_p \setminus VJN_p$. By modifying the function, we get another function which is in $VJN_p \setminus L^p$.

\begin{proposition}
\label{jnpmuttaeivjnp}
Let $1 < p < \infty$ and $Q_0 \subset \mathbb{R}$ a bounded interval. Then there is a function $g : Q_0 \rightarrow \mathbb{R}$ such that $g \in JN_p(Q_0) \setminus VJN_p(Q_0)$.
\end{proposition}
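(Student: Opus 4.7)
The plan is to take $g$ to be the function from \cite{nontriviality} constructed with parameters (\ref{gparametrit}), already known to satisfy $g \in JN_p(Q_0)$. To show $g \notin VJN_p(Q_0)$ I will produce, for every sufficiently small $a > 0$, a pairwise disjoint collection of subintervals of $Q_0$ with side lengths at most $a$ whose $VJN_p$-sum is bounded below by a positive constant depending only on $p$; this keeps the supremum defining $VJN_p$ bounded away from zero as $a \to 0$.

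For each dyadic $I \subsetneq [0,\tfrac{1}{2})$ of level $i$ I would take $J_I$ to be the interval of length $2 l_i$ centered on $\hat{I} = [t_I, t_I + l_i]$. First I would check that the family $\{J_I : |I| = 2^{-i}\}$ is pairwise disjoint and lies in $Q_0$ for $i$ large. The minimum separation between level-$i$ towers is between siblings and equals $l_{i-1} + 2 d_{i-1}$, which far exceeds $l_i$, while $l_i/d_{i-1} = 2^{-i-1/4}$, so the buffer $l_i/2$ never reaches the level-$(i-1)$ parent either. Next I would estimate $g_{J_I}$. On $J_I$, $g$ equals $h_i$ on $\hat{I}$ (measure $l_i$), vanishes on most of $J_I \setminus \hat{I}$ (measure $\approx l_i$), and takes higher values $h_j$ ($j > i$) on tiny sets corresponding to descendants of $\hat{I}$ inside $J_I$. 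A geometric tail estimate using
\[
\frac{l_{j+1} h_{j+1}}{l_j h_j} = 2^{-(2j+2) + (2j+1)/p} < 1 \qquad \text{for } p > 1,
\]
shows these descendant contributions perturb $\int_{J_I} g$ by a factor $1 + o(1)$ as $i \to \infty$, so $g_{J_I} = \tfrac{1}{2} h_i (1 + o(1))$. Since on the empty part of $J_I$, which has measure at least $l_i/2$ for $i$ large, one has $|g - g_{J_I}| = g_{J_I} \geq h_i/2$, the identity $\int |g - g_{J_I}| = 2 \int (g_{J_I} - g)^+$ gives $\fint_{J_I} |g - g_{J_I}| \geq h_i/4$.

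Summing over the $2^{i-1}$ dyadic $I$ at level $i$ and using $l_i h_i^p = 2^{-(i+1/2)^2 + i^2} = 2^{-i - 1/4}$,
\[
\sum_{|I| = 2^{-i}} |J_I| \left( \fint_{J_I} |g - g_{J_I}| \right)^p \geq 2^{i-1} \cdot 2 l_i \cdot (h_i/4)^p = 2^{-2p - 1/4},
\]
a positive constant independent of $i$. For any $a > 0$, choosing $i$ large enough that $2 l_i \leq a$ and the asymptotic estimates apply exhibits a disjoint collection at scale $\leq a$ with sum $\geq 2^{-2p - 1/4}$, proving $g \notin VJN_p(Q_0)$.

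The main obstacle is the handling of the higher-level towers that sit inside $J_I$: since $l_i/2 \gg d_i$, the interval $J_I$ automatically contains the two adjacent level-$(i+1)$ descendants of $\hat{I}$, which in turn contain their own descendants, and so on. One must show the total contribution of these descendants to $\int_{J_I} g$ is $o(l_i h_i)$, which relies on the super-exponential decay of $l_j h_j$ for $j > i$ coming from the positive $(1 - 1/p) j^2$ term in the exponent when $p > 1$; without this decay the lower bound $\fint_{J_I} |g - g_{J_I}| \geq c\, h_i$ could fail.
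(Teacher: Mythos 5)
Your proposal is correct and takes essentially the same route as the paper: the same function $g$, intervals of length $2l_i$ placed around each level-$i$ tower $\hat{I}$, a per-interval lower bound of order $l_i h_i^p$, and summation over the $2^{i-1}$ towers of level $i$ to obtain a positive constant independent of the scale $a$. The only (harmless) difference is in how the oscillation is bounded below: you use the zero set of $g$ in $J_I$ together with $g_{J_I}\geq h_i/2$ and the identity $\int_{J_I}|g-g_{J_I}| = 2\int_{J_I}(g_{J_I}-g)^+$, whereas the paper integrates over $\hat{I}$ only and instead proves the upper bound $g_J \leq \tfrac{3}{4}h_i$ for large $i$.
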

\begin{proof}
Let $g$ be the same function as in Section \ref{pääosio}. It is already known that $g \in JN_p(Q_0)$. 
To show that $g \notin VJN_p(Q_0)$, let $a>0$ be arbitrarily small. Then there is a positive integer $m$ such that $2 l_m \leq a$. For any integer $k > m$ we notice that $d_{k-1} \geq l_k \geq D_k$. This means that for each interval $\hat{I}$ of length $l_k$ we can choose an interval $J$ of length $2 l_k$ such that it covers $\hat{I}$ and all of its descendants on one side and the interval $J$ doesn't intersect any other interval $\hat{I'}$. Then we get
\begin{equation*}
|J| \left( \fint_J |g - g_{J}| \right)^p
\geq |J|^{1-p} \left( \int_{\hat{I}} |g - g_{J}| \right)^p
= (2 l_k)^{1-p} \left( l_k (h_k - g_J) \right)^p.
\end{equation*}
On the other hand we know that
\begin{align*}
g_J
&= \fint_J g
= \frac{1}{2 l_k} \left( h_k l_k + \sum_{i=1}^{\infty} 2^{i-1} h_{k+i} l_{k+i} \right)
\\
&= h_k \left( \frac{1}{2} + \frac{1}{4} \sum_{i=1}^{\infty} 2^{ \left( \frac{1}{p} - 1 \right) (i^2 + 2 ki)} \right)
\\
&\leq h_k \left( \frac{1}{2} + \frac{1}{4} 2^{ \left( \frac{1}{p} - 1 \right) 2k} \sum_{i=1}^{\infty} 2^{ \left( \frac{1}{p} - 1 \right) i^2} \right)
\\
&\leq \frac{3}{4} h_k
\end{align*}
at least when $k$ is large enough. Thus for $k > m$ large enough, we have
\begin{align*}
|J| \left( \fint_J |g - g_{J}| \right)^p
&\geq (2 l_k)^{1-p} \left( l_k (h_k - g_J) \right)^p
\\
&\geq 2^{1-p} l_k \left( \frac{h_k}{4} \right)^p
\\
&= 2^{1 - 3p} \cdot 2^{-k^2 - k - 1/4} \cdot 2^{k^2}
\\
&= 2^{\frac{3}{4} - 3 p} \cdot 2^{-k}.
\end{align*}
Since there are $2^{k-1}$ many intervals $\hat{I}$ of length $l_k$, this means that
\begin{equation*}
\sup_{\substack{Q_i \subset Q_0 \\  l(Q_i) \leq a}}
\sum_{i=1}^{\infty} |Q_i| \left( \fint_{Q_i} | g - g_{Q_i} | \right)^p
\geq 2^{k-1} \cdot 2^{\frac{3}{4} - 3 p} \cdot 2^{-k}
= 2^{-\frac{1}{4} - 3 p}
\end{equation*}
and this is true for all $a>0$, so especially the limit as $a \to 0$ is positive. Thus by definition $g \notin VJN_p(Q_0)$.
\end{proof}

\begin{proposition}
\label{vjnpmuttaeilp}
Let $1 < p < \infty$ and $Q_0 \subset \mathbb{R}$ a bounded interval. Then there is a function $g_0 : Q_0 \rightarrow \mathbb{R}$ such that $g_0 \in VJN_p(Q_0) \setminus L^p(Q_0)$.
\end{proposition}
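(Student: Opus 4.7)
The plan is to modify the function $g$ from Proposition \ref{jnpmuttaeivjnp} by damping the tower heights, preserving the spatial geometry but making the level-$i$ oscillation decay mildly in $i$. Keeping the widths $l_i$ and distances $d_i$ as in $g$, I would set
\begin{equation*}
h_i := i^{-1/p} \cdot 2^{i^2/p}, \quad i \ge 1,
\end{equation*}
and define $g_0 := \sum_I h_{|I|} \mathbf{1}_{\hat I}$, piecewise constant on each $\hat I$.

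The failure $g_0 \notin L^p(Q_0)$ is immediate from
\begin{equation*}
\int_{Q_0} g_0^p = \sum_{i=1}^{\infty} 2^{i-1} l_i h_i^p = \sum_{i=1}^{\infty} 2^{-5/4} i^{-1} = \infty.
\end{equation*}
For $g_0 \in JN_p(Q_0)$, I would rerun the short/medium/long analysis from Section \ref{pääosio}, specialized to the piecewise-constant setting so that the Taylor-type third term in Lemma \ref{uusifjraja} vanishes because $a_i = b_i$. Each surviving level-$i$ bound carries an extra factor $i^{-1}$ compared with the $g$-case, so the sums remain finite.

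The heart of the proof is $g_0 \in VJN_p(Q_0)$: for any disjoint family $\{J\}$ with $l(J) \leq a$, the total $\sum F(J)$ must vanish as $a \to 0$. Classify each $J$ as contained/short/medium/long relative to $I_J$. Contained cubes contribute zero since $g_0$ is constant on $\hat I$. For medium $J$ the condition $|J| > \delta_i$, and for long $J$ the condition $|J| > 2D_i$, together with $|J| \leq a$, force $i_J \geq K_a \sim \sqrt{\log_2(1/a)}$ (up to a constant); in both cases the level-$i$ bound is dominated by $i^{-1} 2^{-i} + 2^{i(1-p)}$, whose tail from $K_a$ vanishes. The long case further uses the Carleson estimate of Proposition \ref{carlesonprop}: the improved per-$J$ bound $F(J) \lesssim i^{-1}|I_J|$ combined with $\sum_{\text{long } I}|I|\le 1$ gives $\sum F(J) \lesssim K_a^{-1}$.

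The main obstacle is the short case. At high levels $i \ge K_a$, the bound $F(J) \le c h_i^p \delta_i = c\, i^{-1} 2^{-2i-1}$ summed over the at most $2^i$ such $J$'s produces a tail $\lesssim K_a^{-1} 2^{-K_a} \to 0$. At low levels $i < K_a$, however, a short $J$ can have $|J| \ll l_i$ while still straddling a wide tower, so $\delta_i$ provides no useful bound. I would instead use the refined estimate $F(J) \le c h_i^p \cdot |J|/2 \le c h_i^p \cdot a/2$, so that the level-$i$ total is $\lesssim (a/2)\cdot 2^i h_i^p = (a/2)\, i^{-1}\, 2^{i^2+i}$. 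Since $2^{i^2+i}$ is super-exponentially increasing in $i$, the sum from $i=1$ to $K_a-1$ is dominated by its final term, which, using $a \lesssim 2^{-K_a^2}$ at the transition, evaluates to $\lesssim 2^{-K_a}/K_a \to 0$. The delicate point is correctly trading the $\delta_i$-bound (sharp at high levels) against the $|J|/2$-bound (sharp at low levels) to get a uniformly vanishing total.
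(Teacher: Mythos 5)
Your proposal is correct and follows essentially the same route as the paper: the same damped heights $h_i=(2^{i^2}/i)^{1/p}$, the same divergent $L^p$ computation, and the same short/medium/long analysis in which small side length $a$ forces all low-level intervals to be short and their contribution is controlled via $\min(|J\setminus\hat I|,|J\cap\hat I|)\le|J|\le a$, while the high-level, medium, and long (Carleson) contributions give vanishing tails. The only difference is bookkeeping in the low-level short case (you dominate a super-exponentially growing sum by its last term, the paper pulls everything to the threshold level using monotonicity of $h_i$), which is an equivalent estimate.
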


\begin{proof}

Let us construct $g_0$ by modifying the function $g$ from Section \ref{pääosio}. For $g_0$ we set the height of every tower as $h_i = \left( \frac{2^{i^2}}{i} \right)^{1/p}$.
The other parameters $l_i$ and $d_i$ are the same as in (\ref{gparametrit}).
Recall that the heights for $g$ were defined as $h_i = 2^{i^2/p}$.

The proof of $g_0 \in JN_p(Q_0)$ follows the same steps as the proof of $g \in JN_p(Q_0)$ in \cite{nontriviality}. Also we notice that
\begin{equation*}
\| g_0 \|_{L^p}^p
= \int_{Q_0} g_0^p
= \sum_{i=1}^{\infty} 2^{i-1} h_i^p l_i
= \sum_{i=1}^{\infty} 2^{i-1} \cdot \frac{2^{i^2}}{i} \cdot 2^{-i^2 - i - 1/4}
= 2^{-1-1/4} \sum_{i=1}^{\infty} \frac{1}{i}
= \infty,
\end{equation*}
thus $g_0 \notin L^p(Q_0)$.

To prove that $g_0 \in VJN_p$, let $a>0$ be a small enough number. Then there is some positive integer $k$ such that $\delta_k \geq a > \delta_{k+1}$. Let $\mathcal{J}$ be a countable collection of disjoint intervals such that $|J| \leq a$ for every $J \in \mathcal{J}$. Define
\begin{equation*}
F(J)
:= |J| \left( \fint_J \left| g_0 - (g_0)_J \right| \right)^p.
\end{equation*}
Similar to the function $u$ in Section \ref{pääosio}, it is clear that if $J$ does not intersect any of the intervals $\hat{I}$, then $F(J) = 0$. If $J$ does intersect some intervals, then there is a unique widest interval $I_J$, such that $J$ intersects $\hat{I}_J$. Also if $J$ does intersect an interval $\hat{I}$, if it does not intersect the boundary of the interval, then $F(J)=0$. Therefore we assume that $J$ intersects the boundary of $\hat{I}_J$. Then there are at most 2 intervals $J$ for every $I$ such that $I = I_J$. Also we can categorize the intervals $J$ as short, medium or long, depending on the length of $J \setminus \hat{I}_J$, the same way as in Definition \ref{containedshortmediumlong} and in \cite{nontriviality}.

We have the following bound for $F(J)$:
\begin{equation}
\label{fjmuttavjnp}
F(J)
\leq c \left[ |J|^{1-p} \left( \int_{J \setminus \hat{I} } g_0 \right)^p + \min \big( |J \setminus \hat{I}|,|J \cap \hat{I}| \big) h_I^p \right],
\end{equation}
where $c = c(p)$ and $I = I_J$. The proof is the same as for \cite[Lemma 3.6]{nontriviality}.

If $I = I_J$ and $|I| = 2^{-i}$ with $i \leq k$, then the fact that $|J \setminus \hat{I}| \leq |J| \leq a \leq \delta_k \leq \delta_i$ implies that $J$ is short. Hence in this case $F(J) \leq c \min \big( |J \setminus \hat{I}|,|J \cap \hat{I}| \big) h_I^p \leq c d_k h_k^p$. The total number of dyadic intervals $I$ that are not shorter than $2^{-k}$ is $1 + 2 + 4 + ... + 2^{k-1} \leq 2^{k}$. Therefore
\begin{equation*}
\sum_{\substack{J \in \mathcal{J} \\ |I_J| \geq 2^{-k} }} F(J)
\leq 2^{k} \cdot 2 c d_k h_k^p
= c 2^k \cdot 2^{-k^2 - 2k} \frac{2^{k^2}}{k}
\leq c 2^{-k}.
\end{equation*}
Since $k \to \infty$ when $a \to 0$, we notice that the limit of this sum is 0.

Assume from now on that $I = I_J$ and $i>k$. If $J$ is short, then $F(J) \leq c d_i h_i^p \leq c 2^{-i^2 - 2i} 2^{i^2} = c 2^{-2i}$. This implies that
\begin{equation*}
\sum_{\substack{J \in \mathcal{J} \\ |I_J| < 2^{-k} \\ J \text{  short  } }} F(J)
\leq \sum_{i=k+1}^{\infty} 2^i \cdot 2 c 2^{-2i}
= c \sum_{i=1}^{\infty} 2^{-k-i}
= c 2^{-k}
\end{equation*}
which tends to zero when $a \to 0$.

If $J$ is medium, then for the second term in (\ref{fjmuttavjnp}) we have essentially the same bound as in the short case. For the first term we have the bound
\begin{equation*}
|J|^{1-p} \left( \int_{J \setminus \hat{I}} g_0 \right)^p
\leq c(p) 2^{-ip}.
\end{equation*}
For the proof we refer to \cite[Lemma 3.8]{nontriviality}, because $g_0 \leq g$ pointwise. This implies that
\begin{align*}
\sum_{\substack{J \in \mathcal{J} \\ |I_J| < 2^{-k} \\ J \text{  medium  } }} F(J)
&\leq \sum_{i=k+1}^{\infty} 2^i \cdot 2 c (2^{-2i} + 2^{-ip})
= c \sum_{i=1}^{\infty} 2^{-k-i} + 2^{(1-p)(k+i)}
\\
&\leq c \left( 2^{-k} + 2^{(1-p)k} \right).
\end{align*}
This also tends to zero when $a \to 0$.

Finally if $J$ is long, we have the same Carleson property as in Proposition \ref{carlesonprop} and in \cite[Lemma 3.9]{nontriviality}:
\begin{equation*}
\sum_{\substack{I \text{  dyadic  } \\ \exists J \text{  long s.t.  } I = I_J}} |I| \leq 1.
\end{equation*}
The bound for the first term in (\ref{fjmuttavjnp}) is the same as in the medium case:
\begin{equation*}
|J|^{1-p} \left( \int_{J \setminus \hat{I}} g_0 \right)^p
\leq c(p) 2^{-ip}
= c 2^{(1-p)i} |I|.
\end{equation*}
For the second term we have the bound
\begin{equation*}
\min \big( |J \setminus \hat{I}|,|J \cap \hat{I}| \big) h_I^p
\leq l_i h_i^p
= 2^{-i^2 - i - 1/4} \frac{2^{i^2}}{i}
\leq \frac{|I|}{i}.
\end{equation*}
This implies that
\begin{align*}
\sum_{\substack{J \in \mathcal{J} \\ |I_J| < 2^{-k} \\ J \text{  long  } }} F(J)
&= \sum_{\substack{I \text{  long  } \\ |I| < 2^{-k} }}
\sum_{\substack{ J \text{  long  } \\ I = I_J }}
F(J)
\leq \sum_{\substack{I \text{  long  } \\ |I| < 2^{-k}  }} 2c \left( 2^{(1-p)i} |I| + \frac{|I|}{i} \right)
\\
&\leq c \left( 2^{(1-p)k} + \frac{1}{k} \right) \sum_{\substack{I \text{  long  } }} |I|
\leq c \left( 2^{(1-p)k} + \frac{1}{k} \right).
\end{align*}
This also tends to zero.

In conclusion
\begin{align*}
\sum_{J \in \mathcal{J}} |J| \left( \fint_{J} \left| g_0 - (g_0)_J \right| \right)^p
&= \sum_{\substack{J \in \mathcal{J} \\ |I_J| \geq 2^{-k} }} F(J) + \sum_{\substack{J \in \mathcal{J} \\ |I_J| < 2^{-k} \\ J \text{  short  } }} F(J)
\\
&+ \sum_{\substack{J \in \mathcal{J} \\ |I_J| < 2^{-k} \\ J \text{  medium  } }} F(J) + \sum_{\substack{J \in \mathcal{J} \\ |I_J| < 2^{-k} \\ J \text{  long  } }} F(J)
\\
&\leq c 2^{-k} + c 2^{-k} + c 2^{-k} + c 2^{(1-p)k} + c 2^{(1-p)k} + \frac{c}{k}
\end{align*}
and this holds for any admissible collection $\mathcal{J}$. This implies that
\begin{equation*}
\lim_{a \to 0}
\sup_{\substack{Q_i \subset Q_0 \\  l(Q_i) \leq a}}
\sum_{i=1}^{\infty} |Q_i| \left( \fint_{Q_i} | g_0 - (g_0)_{Q_i} | \right)^p
\leq \lim_{k \to \infty} c \left( 2^{-k} + 2^{(1-p)k} + \frac{1}{k} \right)
= 0
\end{equation*}
and so by definition $g_0 \in VJN_p(Q_0)$.
\end{proof}

It is worth noting that the choice of $h_i$ in the previous proof does not need to be exactly what it is. It is sufficient that $h_i = \left( 2^{i^2} \cdot s_i \right)^{1/p}$ where $s_i$ is a sequence of nonnegative real numbers such that
\begin{equation*}
\lim_{i \to \infty} s_i = 0
\text{  and  }
\sum_{i=1}^{\infty} s_i = \infty.
\end{equation*}

Now we would like to extend this result to the multidimensional case. The following proposition is a counterpart of Proposition \ref{laajennus}.

\begin{proposition}
\label{laajennus2}
Let $Q_0 \subset \mathbb{R}^n$ be a bounded cube, $f \in L^1(Q_0)$, and $\tilde{f}(x,t) := f(x)$ its trivial extension to $(x,t) \in \tilde{Q}_0 := Q_0 \times [0,l(Q_0)) \subset \mathbb{R}^{n+1}$.
Then $f \in VJN_p(Q_0)$ if and only if $\tilde{f} \in VJN_p(\tilde{Q}_0)$.
\end{proposition}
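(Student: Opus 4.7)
The plan is to adapt the proof of Proposition \ref{laajennus} while tracking the side-length constraint that appears in the definition of $VJN_p$. The essential identity is that for any cube $\tilde{Q} = R \times I \subset \tilde{Q}_0$ with $l(R) = l(I) = s$, the trivial extension satisfies $\tilde{f}_{\tilde{Q}} = f_R$ and therefore
\begin{equation*}
|\tilde{Q}| \left( \fint_{\tilde{Q}} |\tilde{f} - \tilde{f}_{\tilde{Q}}| \right)^p = s\, |R| \left( \fint_R |f - f_R| \right)^p.
\end{equation*}
Let $L = l(Q_0)$, and denote by $S_f(a)$ the supremum of the $JN_p$-sum over disjoint collections of cubes in $Q_0$ with side length at most $a$, and analogously $S_{\tilde{f}}(a)$. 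The statement reduces to proving $S_f(a) \to 0$ if and only if $S_{\tilde{f}}(a) \to 0$ as $a \to 0$, which I would do by establishing two-sided inequalities $\frac{L}{2} S_f(a) \leq S_{\tilde{f}}(a) \leq L\, S_f(a)$ for $a$ sufficiently small.

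For the forward direction I would use a horizontal-slice argument. Given a disjoint collection $\{\tilde{Q}_j = R_j \times I_j\}$ in $\tilde{Q}_0$ with $l(\tilde{Q}_j) \leq a$, for each $t \in [0,L)$ the subfamily $\{R_j : t \in I_j\}$ consists of pairwise disjoint cubes in $Q_0$ of side length at most $a$. Indeed, if $R_{j_1} \cap R_{j_2} \ni x$ with $t \in I_{j_1} \cap I_{j_2}$, then $(x,t) \in \tilde{Q}_{j_1} \cap \tilde{Q}_{j_2}$, contradicting disjointness. Hence the $JN_p$-sum for this slice is bounded by $S_f(a)$. Integrating in $t$, noting $\int_0^L \mathbf{1}_{I_j}(t)\, dt = l(\tilde{Q}_j)$, and applying the key identity gives $S_{\tilde{f}}(a) \leq L\, S_f(a)$.

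For the converse direction I would use a vertical-stacking argument. Given a disjoint collection $\{Q_i\}$ in $Q_0$ with $l(Q_i) \leq a \leq L/2$, form the stacked copies
\begin{equation*}
\tilde{Q}_i^{(k)} := Q_i \times \big[ k\, l(Q_i),\, (k+1)\, l(Q_i) \big), \qquad k = 0, \ldots, N_i - 1,
\end{equation*}
with $N_i = \lfloor L/l(Q_i) \rfloor$. These are pairwise disjoint cubes in $\tilde{Q}_0$ (disjoint for different $i$ since the $Q_i$ are disjoint, and disjoint within one $i$ by the $t$-intervals) of side length $l(Q_i) \leq a$. The key identity yields
\begin{equation*}
\sum_{i,k} |\tilde{Q}_i^{(k)}| \left( \fint_{\tilde{Q}_i^{(k)}} |\tilde{f} - \tilde{f}_{\tilde{Q}_i^{(k)}}| \right)^p = \sum_i N_i\, l(Q_i) |Q_i| \left( \fint_{Q_i} |f - f_{Q_i}| \right)^p,
\end{equation*}
and $N_i\, l(Q_i) \geq L - l(Q_i) \geq L/2$ under $a \leq L/2$. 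Hence $S_f(a) \leq \tfrac{2}{L}\, S_{\tilde{f}}(a)$.

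Taking $a \to 0$ in both inequalities gives the equivalence of the vanishing conditions. Combined with Proposition \ref{laajennus} (which provides the $JN_p$ membership and $L^1$ integrability via $\int_{\tilde{Q}_0} |\tilde{f}| = L \int_{Q_0} |f|$), this completes the proof. The whole argument is essentially routine once the slicing and stacking operations are set up; the one point that requires care is verifying that these operations preserve both disjointness and the side-length bound, and that stacking produces enough copies to give the comparable lower bound (which is exactly what the restriction $a \leq L/2$ ensures).
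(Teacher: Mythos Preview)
Your proof is correct and follows essentially the same approach as the paper: the paper merely states that one should repeat the proof of Proposition~\ref{laajennus} (i.e., \cite[Proposition~4.1]{nontriviality}) with the additional side-length constraint $l(Q_i)\le a$ and then let $a\to 0$, and your horizontal-slicing and vertical-stacking arguments are precisely the content of that reference, carried out with the constraint in place. The only detail worth noting is that the restriction $a\le L/2$ in the stacking step is exactly what yields the constant $2^{-1}$ appearing in Proposition~\ref{laajennus}.
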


\begin{proof}
The proof is very similar to the proof of \cite[Proposition 4.1]{nontriviality}. The only difference is that in the partition of $Q_0$ into subcubes we have the additional criterion that $l(Q_i) \leq a$ for every $i$ and for some $a > 0$. Ultimately we get the result by taking the limit $a \to 0$.
\end{proof}

By using Propositions \ref{laajennus} and \ref{laajennus2} multiple times, we can extend both $g$ and $g_0$ into the multidimensional cube $Q_0 \subset \mathbb{R}^n$ for any $n$, such that $g \in JN_p(Q_0) \setminus VJN_p(Q_0)$ and $g_0 \in VJN_p(Q_0) \setminus L^p(Q_0)$. In conclusion we have shown that for any bounded cube $Q_0 \subset \mathbb{R}^n$
\begin{equation*}
L^p(Q_0)
\subsetneq VJN_p(Q_0)
\subsetneq JN_p(Q_0)
\subsetneq L^{p,\infty}(Q_0).
\end{equation*}

\section{The spaces $JN_p$, $VJN_p$ and $CJN_p$ defined on $\mathbb{R}^n$}
\label{rn}

The definition of the space $JN_p$ can be extended from the bounded cube $Q_0$ into the whole Euclidian space $\mathbb{R}^n$. In this section we deal with the entire space $\mathbb{R}^n$ and always assume that $JN_p = JN_p(\mathbb{R}^n)$ and $VJN_p = VJN_p(\mathbb{R}^n)$ etc. unless otherwise specified.
\begin{definition}[$JN_p$]
\label{jnprn}
Let $1 \leq p < \infty$. Then $f \in JN_p$ if $f \in L_{loc}^1$ and there is a constant $K < \infty$ such that
\begin{equation*}
\sum_{i=1}^{\infty} |Q_i| \left( \fint_{Q_i} | f - f_{Q_i} | \right)^p \leq K^p
\end{equation*}
for all countable collections of pairwise disjoint cubes $(Q_i)_{i=1}^{\infty}$ in $\mathbb{R}^n$.
We denote the smallest such number $K$ by $\|f\|_{JN_p}$.
\end{definition}

The embedding $JN_p(Q_0) \subset L^{p,\infty}(Q_0)$ in Theorem \ref{embedding} has been proven in a number of ways. However all the proofs, that we know of, only consider $JN_p$ on bounded sets, not on $\mathbb{R}^n$.
Clearly any constant function $f=c$ is in the space $JN_p$ even though it is in $L^{p,\infty}$ only if $c=0$. So let us define for $p > 1$ the space
\begin{equation*}
L^{p,\infty} / \mathbb{R}
:= \left\{ f \in L_{loc}^1 : \exists \text{  } c \in \mathbb{R} \text{  s.t.  } f - c \in L^{p,\infty} \right\}.
\end{equation*}
Then for the sake of completeness we prove the following theorem.
\begin{theorem}
Let $1 < p < \infty$ and $f \in JN_p$. Then $f \in L^{p,\infty} / \mathbb{R}$ and there is a constant $b$ such that
\begin{equation*}
\| f - b \|_{L^{p,\infty}}
\leq c \| f \|_{JN_p}
\end{equation*}
where $c = c(n,p)$.
\end{theorem}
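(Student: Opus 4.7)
The plan is to reduce to the bounded case by exhausting $\mathbb{R}^n$ by an increasing sequence of cubes $Q_k$ (for concreteness, centered at the origin with $|Q_k| = 2^{kn}$, so $Q_k \nearrow \mathbb{R}^n$). On each $Q_k$ we have $\|f\|_{JN_p(Q_k)} \leq \|f\|_{JN_p}$ (any partition inside $Q_k$ is a partition of $\mathbb{R}^n$), so Theorem \ref{embedding} gives
\[
\| f - f_{Q_k} \|_{L^{p,\infty}(Q_k)} \leq c \|f\|_{JN_p}
\]
with a constant $c = c(n,p)$ independent of $k$. The candidate constant $b$ will be $\lim_{k\to\infty} f_{Q_k}$; the task is to show this limit exists and to pass the weak-$L^p$ estimate to $\mathbb{R}^n$.

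First I would show $\{f_{Q_k}\}$ is Cauchy. The constant function $f_{Q_k} - f_{Q_{k+1}}$ on $Q_k$ has weak-$L^p$ norm $|f_{Q_k} - f_{Q_{k+1}}|\cdot |Q_k|^{1/p}$. Writing it as the difference $(f - f_{Q_{k+1}}) - (f - f_{Q_k})$ on $Q_k$ and applying the quasi-triangle inequality for $L^{p,\infty}$ (which costs only a $p$-dependent constant since $p>1$),
\[
|f_{Q_k} - f_{Q_{k+1}}|\cdot |Q_k|^{1/p}
\leq c_p \bigl( \|f - f_{Q_{k+1}}\|_{L^{p,\infty}(Q_{k+1})} + \|f - f_{Q_k}\|_{L^{p,\infty}(Q_k)} \bigr)
\leq C \|f\|_{JN_p}.
\]
Hence $|f_{Q_k} - f_{Q_{k+1}}| \leq C \|f\|_{JN_p} |Q_k|^{-1/p}$, and since $|Q_k|^{-1/p} = 2^{-kn/p}$ forms a geometric series, the sequence converges to some $b \in \mathbb{R}$. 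Summing the geometric tail gives the same-type bound
\[
|f_{Q_k} - b| \cdot |Q_k|^{1/p} \leq C' \|f\|_{JN_p}
\]
uniformly in $k$.

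Then on each $Q_k$ I decompose $f - b = (f - f_{Q_k}) + (f_{Q_k} - b)$ and apply the $L^{p,\infty}$ quasi-triangle inequality once more:
\[
\|f - b\|_{L^{p,\infty}(Q_k)} \leq c_p \bigl( c \|f\|_{JN_p} + |f_{Q_k} - b| \cdot |Q_k|^{1/p} \bigr) \leq C'' \|f\|_{JN_p},
\]
with $C''$ independent of $k$. To pass to $\mathbb{R}^n$, I use monotone convergence on superlevel sets: for each $\lambda > 0$,
\[
\lambda \bigl| \{x \in \mathbb{R}^n : |f(x) - b| > \lambda\} \bigr|^{1/p}
= \lim_{k \to \infty} \lambda \bigl| \{x \in Q_k : |f(x) - b| > \lambda\} \bigr|^{1/p}
\leq C'' \|f\|_{JN_p},
\]
and taking the supremum over $\lambda$ gives $\|f - b\|_{L^{p,\infty}(\mathbb{R}^n)} \leq C'' \|f\|_{JN_p}$, proving $f \in L^{p,\infty}/\mathbb{R}$.

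The main obstacle is bookkeeping with the weak-$L^p$ quasi-triangle inequality; because $L^{p,\infty}$ is only a quasi-Banach space, each triangle step costs a constant, but this is harmless because $p>1$ and all constants depend only on $n$ and $p$. The only genuinely delicate point is ensuring that the decay rate $|Q_k|^{-1/p}$ is summable so that $\{f_{Q_k}\}$ is actually Cauchy; choosing $Q_k$ to grow geometrically (rather than, say, linearly) makes this automatic.
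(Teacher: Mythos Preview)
Your proof is correct and follows essentially the same route as the paper: exhaust $\mathbb{R}^n$ by a geometrically increasing sequence of cubes, show the averages $f_{Q_k}$ are Cauchy with decay $|Q_k|^{-1/p}$, set $b=\lim f_{Q_k}$, and pass the uniform weak-$L^p$ bound to the limit via monotone convergence of superlevel sets. The only cosmetic difference is that the paper obtains the Cauchy estimate directly from the elementary bound $\fint_Q|f-f_Q|\le |Q|^{-1/p}\|f\|_{JN_p}$ (a single cube in the $JN_p$ definition) rather than invoking Theorem~\ref{embedding} and the $L^{p,\infty}$ quasi-triangle inequality, and in the final step splits the level set $\{|f-b|>t\}$ into $\{|f-f_{Q_m}|>t/2\}$ and $\{|f_{Q_m}-b|>t/2\}$ (the latter eventually empty) instead of bounding $\|f-b\|_{L^{p,\infty}(Q_k)}$ uniformly; both variants are equivalent in spirit and in the constants produced.
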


\begin{proof}
First let $Q \subset \mathbb{R}^n$ be any bounded cube. It is clear that if we restrict $f$ to this cube, then we have $f \in JN_p(Q)$ and therefore from Theorem \ref{embedding} we get that $\| f - f_Q \|_{L^{p,\infty}(Q)} \leq c \| f \|_{JN_p(Q)}$. Further from Definition \ref{jnprn} it immediately follows that
\begin{equation*}
\fint_Q |f - f_Q|
\leq |Q|^{-1/p} \| f \|_{JN_p}.
\end{equation*}
Now let us define a sequence of cubes $(Q_m)_{m=1}^{\infty}$ such that $|Q_m| = 2^m$ and the center of every cube is the origin. Then clearly we have $Q_1 \subset Q_2 \subset ...$ and $\bigcup_{m=1}^{\infty} Q_m = \mathbb{R}^n$.
Let us prove that the sequence of integral averages $f_{Q_m}$ is a Cauchy sequence. 
First we notice that for the difference of two consecutive elements we have the following estimate:
\begin{align*}
|f_{Q_{i+1}} - f_{Q_{i}}|
&\leq \frac{|Q_{i+1}|}{|Q_i|} \fint_{Q_{i+1}} |f_{Q_{i+1}} - f| + \fint_{Q_i} |f - f_{Q_{i}}|
\\
&\leq 2 |Q_{i+1}|^{-1/p} \| f \|_{JN_p} + |Q_i|^{-1/p} \| f \|_{JN_p}
\\
&= \left( 2^{1-1/p} + 1 \right) \| f \|_{JN_p} 2^{-i/p}.
\end{align*}
Now let $\epsilon > 0$ and $N = N(\epsilon)$ an integer that will depend on $\epsilon$. Let $m , k \geq N$ and let us assume that $k \geq m$. Then
\begin{align*}
|f_{Q_k} - f_{Q_m}|
&\leq \sum_{i=m}^{k-1} |f_{Q_{i+1}} - f_{Q_{i}}|
\leq \sum_{i=N}^{\infty} \left( 2^{1-1/p} + 1 \right) \| f \|_{JN_p} 2^{-i/p}
\\
&= c(p) \| f \|_{JN_p} 2^{-N/p}
< \epsilon
\end{align*}
when $N$ is large enough. Thus $f_{Q_m}$ is a Cauchy sequence and we can set $b := \lim_{m \to \infty} f_{Q_m}$. 
Then we have for any positive integer $m$
\begin{align*}
\left| \left\{ x \in Q_m : |f(x) - b| > t \right\} \right|
&\leq \left| \left\{ x \in Q_m : |f(x) - f_{Q_m}| > \frac{t}{2} \right\} \right|
\\
&+ \left| \left\{ x \in Q_m : |f_{Q_m} - b| > \frac{t}{2} \right\} \right|
\\
&\leq c \frac{\| f \|_{JN_p(Q_m)}^p}{\left( \frac{t}{2} \right)^p} + \left| \left\{ x \in Q_m : |f_{Q_m} - b| > \frac{t}{2} \right\} \right|
\\
&\leq c \frac{\| f \|_{JN_p}^p}{t^p} + \left| \left\{ x \in Q_m : |f_{Q_m} - b| > \frac{t}{2} \right\} \right|
\end{align*}
for all $t > 0$. 
The second term is either equal to $|Q_m|$ or 0, but when we take the limit $m \to \infty$ it tends to 0. Therefore we have
\begin{equation*}
\left| \left\{ x \in \mathbb{R}^n : |f(x) - b| > t \right\} \right|
= \lim_{m \to \infty} \left| \left\{ x \in Q_m : |f(x) - b| > t \right\} \right|
\leq c \frac{\| f \|_{JN_p}^p}{t^p}.
\end{equation*}
This completes the proof.
\end{proof}

Notice also that the zero-extension of the function we had in Section \ref{preliminaries} in $L^{p,\infty}(Q_0) \setminus JN_p(Q_0)$ is in $L^{p,\infty} / \mathbb{R} \setminus JN_p$. This shows that $JN_p \subsetneq L^{p,\infty} / \mathbb{R}$.
It is easy to see, by using a similar method as in the previous proof, that $JN_1 = L^1 / \mathbb{R}$. Therefore we assume that $p > 1$ from now on.

The vanishing subspace $VJN_p$ can be defined in the whole space as well. 
\begin{definition}[$VJN_p$]
Let $1 < p < \infty$. Then the vanishing subspace $VJN_p$ of $JN_p$ is defined by setting
\begin{equation*}
VJN_p
:= \overline{D_p(\mathbb{R}^n) \cap JN_p}^{JN_p},
\end{equation*}
where
\begin{equation*}
D_p(\mathbb{R}^n)
:= \left\{ f \in C^{\infty}(\mathbb{R}^n) : |\nabla f| \in L^p(\mathbb{R}^n) \right\} .
\end{equation*}
\end{definition}

The following characterization of $VJN_p$ is proven in \cite[Theorem 3.2]{vjnplahde}.
\begin{theorem}
\label{vjnprnlause}
Let $1 < p < \infty$. Then $f \in VJN_p$ if and only if $f \in JN_p$ and
\begin{equation*}
\lim_{a \to 0}
\sup_{\substack{Q_i \subset \mathbb{R}^n \\  l(Q_i) \leq a}}
\sum_{i=1}^{\infty} |Q_i| \left( \fint_{Q_i} | f - f_{Q_i} | \right)^p
= 0
\end{equation*}
where the supremum is taken over all collections of pairwise disjoint cubes $Q_i$ such that the side length of each $Q_i$ is at most $a$.
\end{theorem}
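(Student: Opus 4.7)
The plan is to prove both implications. Introducing the notation
\[
V(f,a) := \sup\sum_{i=1}^\infty |Q_i|\left(\fint_{Q_i}|f-f_{Q_i}|\right)^p
\]
for the supremum in the theorem (taken over disjoint cubes with side length at most $a$), a direct application of Minkowski's inequality to the $\ell^p$-sequence $(|Q_i|^{1/p}\fint_{Q_i}|f-f_{Q_i}|)_{i}$ yields the subadditivity
\[
V(f+g,a)^{1/p} \leq V(f,a)^{1/p} + V(g,a)^{1/p},
\]
together with the trivial bound $V(f,a)\leq\|f\|_{JN_p}^p$. These are the structural facts that drive the argument.

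For the forward direction, I would use that any $f\in VJN_p$ is a $JN_p$-limit of smooth $g_k\in D_p\cap JN_p$. For such a $g$ the $L^p$-Poincar\'e inequality gives $|Q|\left(\fint_Q|g-g_Q|\right)^p\leq c\,l(Q)^p\int_Q|\nabla g|^p$, so for any disjoint family with $l(Q_i)\leq a$,
\[
V(g,a)\leq c\,a^p\int_{\mathbb{R}^n}|\nabla g|^p \to 0 \quad \text{as } a\to 0.
\]
Combining this with subadditivity and $\|f-g_k\|_{JN_p}\to 0$ gives $V(f,a)\to 0$ by first choosing $k$ large and then $a$ small.

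For the converse, assume $f\in JN_p$ with $V(f,a)\to 0$. My approximating sequence is the standard mollification $f_\epsilon:=f*\phi_\epsilon$. First I would verify that $f_\epsilon\in D_p\cap JN_p$: writing $f_\epsilon-(f_\epsilon)_Q=\int\phi_\epsilon(y)[f(\cdot-y)-f_{Q-y}]\,dy$, applying Jensen, and using that translates of disjoint cubes remain disjoint yields $\|f_\epsilon\|_{JN_p}\leq\|f\|_{JN_p}$ and $V(f_\epsilon,a)\leq V(f,a)$. For $f_\epsilon\in D_p$, smoothness is clear, and the cancellation $\int\nabla\phi_\epsilon=0$ combined with a local-average subtraction gives $|\nabla f_\epsilon(x)|\leq c\,\epsilon^{-1}\fint_{Q_x}|f-f_{Q_x}|$ for $Q_x$ a cube of side $\sim\epsilon$ through $x$, so a tiling argument yields $\|\nabla f_\epsilon\|_p^p\leq c\,\epsilon^{-p}\|f\|_{JN_p}^p$.

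The heart of the converse, and the hardest step, is to establish $\|f-f_\epsilon\|_{JN_p}\to 0$ as $\epsilon\to 0$. Given $\eta>0$ I would fix $a$ with $V(f,a)<\eta$ and split an arbitrary disjoint collection $(Q_i)$ into small cubes $l(Q_i)\leq a$ and large cubes $l(Q_i)>a$. The small-cube contribution is bounded by $2^p\eta$ using subadditivity and $V(f_\epsilon,a)\leq V(f,a)$. For the large cubes I would use the identity
\[
(f-f_\epsilon)(x)-(f-f_\epsilon)_Q = \int\phi_\epsilon(y)\bigl[(f(x)-f_Q)-(f(x-y)-f_{Q-y})\bigr]\,dy
\]
and exploit that for $|y|\leq\epsilon\ll l(Q)$ the averages $f_Q$ and $f_{Q-y}$ differ only through a boundary sliver of relative width $\epsilon/l(Q)$. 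The main obstacle will be to make this large-cube estimate uniform in the collection (which may contain infinitely many cubes and in which $f-f_\epsilon$ need not lie in $L^p(\mathbb{R}^n)$), so that sending first $\epsilon\to 0$ with $a$ fixed and then $a\to 0$ completes the proof.
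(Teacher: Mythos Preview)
The paper does not prove this theorem; it simply quotes it and cites \cite[Theorem 3.2]{vjnplahde} for the proof. So there is no in-paper argument to compare against, and I can only assess your proposal on its own merits.

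Your forward direction is correct and is the standard route: Poincar\'e on each cube for $g\in D_p\cap JN_p$ gives $V(g,a)\le c\,a^{p}\|\nabla g\|_{p}^{p}$, and the subadditivity of $V(\cdot,a)^{1/p}$ passes this to $JN_p$-limits. The verification that $f_\epsilon\in D_p\cap JN_p$ is also fine; the tiling argument you sketch does yield $\|\nabla f_\epsilon\|_p^p\le c\,\epsilon^{-p}\|f\|_{JN_p}^p$.

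The genuine gap is exactly the one you flag: the large-cube contribution to $\|f-f_\epsilon\|_{JN_p}$. Your identity is correct, but the observation that $f_Q$ and $f_{Q-y}$ differ only on a thin sliver does not by itself control $\fint_Q|(f(x)-f_Q)-(f(x-y)-f_{Q-y})|\,dx$, since the pointwise difference $f(x)-f(x-y)$ is not small in any useful sense and the collection of large cubes may be infinite. One way to close the gap that stays within your framework: for each large $Q_i$ with $l(Q_i)>a\ge 2\epsilon$, partition a slight enlargement of $Q_i$ into cubes $R_{i,j}$ of side comparable to $\epsilon$; then $\int_{Q_i}|f-f_\epsilon|\le C\sum_j\int_{R_{i,j}^{*}}|f-f_{R_{i,j}^{*}}|$ (with $R_{i,j}^{*}$ a fixed dilate), and H\"older with $N_i\sim|Q_i|/\epsilon^n$ terms converts $|Q_i|^{1-p}\bigl(\sum_j\cdots\bigr)^p$ into $C\sum_j|R_{i,j}^{*}|(\fint_{R_{i,j}^{*}}|f-f_{R_{i,j}^{*}}|)^p$. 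Because the $Q_i$ are pairwise disjoint and $\epsilon\ll a$, the family $\{R_{i,j}^{*}\}$ has bounded overlap, so the total large-cube contribution is at most $C(n)\,V(f,C\epsilon)\to 0$. Without an argument of this sort (or the original one in \cite{vjnplahde}), the converse implication is not established.
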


We also study $CJN_p$, which is another subspace of $JN_p$. 
This space is defined analogously to the space of continuous mean oscillation $CMO$, which is a subspace of $BMO$.
In the case of the bounded cube, $CJN_p(Q_0) = VJN_p(Q_0)$. Hence we only define $CJN_p$ in the whole space.
\begin{definition}[$CJN_p$]
Let $1 < p < \infty$. Then the subspace $CJN_p$ of $JN_p$ is defined by setting
\begin{equation*}
CJN_p
:= \overline{C_c^{\infty}(\mathbb{R}^n) }^{JN_p},
\end{equation*}
where $C_c^{\infty}(\mathbb{R}^n)$ denotes the set of smooth functions with compact support in $\mathbb{R}^n$.
\end{definition}

The following characterization of $CJN_p$ is proven in \cite[Theorem 4.3]{vjnplahde}.
\begin{theorem}
\label{cjnplause}
Let $1 < p < \infty$. Then $f \in CJN_p$ if and only if $f \in VJN_p$ and
\begin{equation*}
\lim_{a \to \infty}
\sup_{\substack{Q \subset \mathbb{R}^n \\  l(Q) \geq a}}
|Q|^{1/p} \fint_Q |f - f_Q|
= 0
\end{equation*}
where the supremum is taken over all cubes $Q \subset \mathbb{R}^n$ such that the side length of $Q$ is at least $a$.
\end{theorem}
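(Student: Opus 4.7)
The plan is to prove both implications of the characterization separately, using the definition $CJN_p = \overline{C_c^\infty(\mathbb{R}^n)}^{JN_p}$ together with Theorem \ref{vjnprnlause}.

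For necessity, I first observe that every $\phi \in C_c^\infty(\mathbb{R}^n)$ lies in $D_p(\mathbb{R}^n) \cap JN_p$, since $\phi \in L^p \subset JN_p$ and $|\nabla\phi|$ is a compactly supported continuous function, hence in $L^p$. Taking closures in $JN_p$ yields $CJN_p \subset VJN_p$ immediately. To check the large-scale vanishing on the dense subset, if $\phi \in C_c^\infty$ with $\supp \phi \subset Q^*$, then $|Q|^{1/p}\fint_Q|\phi - \phi_Q| \leq 2|Q|^{1/p-1}\|\phi\|_{L^1}$, which tends to $0$ as $l(Q) \to \infty$ because $p > 1$. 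The extension to arbitrary $f \in CJN_p$ is by approximation: applying Definition \ref{jnprn} to the one-cube collection $\{Q\}$ gives
\begin{equation*}
|Q|^{1/p}\fint_Q|(f-\phi) - (f-\phi)_Q| \leq \|f - \phi\|_{JN_p}
\end{equation*}
uniformly in $Q$, so pick $\phi \in C_c^\infty$ close to $f$ in $JN_p$ and combine with the already-established vanishing for $\phi$.

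For sufficiency, I would first use the definition of $VJN_p$ to approximate $f$ in $JN_p$ by some $g \in D_p(\mathbb{R}^n) \cap JN_p$, and then truncate $g$ to compact support via a standard cutoff $\eta_R \in C_c^\infty$ satisfying $\eta_R \equiv 1$ on $B(0,R)$, $\eta_R \equiv 0$ outside $B(0,2R)$, and $|\nabla\eta_R| \leq c/R$, setting $g_R := g\eta_R \in C_c^\infty(\mathbb{R}^n)$. The task is then to show $\|g - g_R\|_{JN_p} \to 0$ as $R \to \infty$. I would split the sum over any admissible disjoint collection into three parts: cubes well inside $B(0,R)$ (where $g - g_R \equiv 0$); cubes well outside $B(0,2R)$ (where $g - g_R = g$, and smallness comes from the large-scale vanishing for $g$, which inherits from that for $f$ via the same uniform bound $|Q|^{1/p}\fint_Q|g - g_Q| \leq |Q|^{1/p}\fint_Q|f - f_Q| + \|f - g\|_{JN_p}$); and cubes straddling the annulus $B(0,2R) \setminus B(0,R)$.

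The main obstacle is the straddling case, where one must control both the residual oscillation of $g$ on the annulus and the extra oscillation introduced by the cutoff itself. I would use the product rule $|\nabla(g\eta_R)| \leq |\eta_R||\nabla g| + |g||\nabla \eta_R|$ together with a Poincaré-type bound on $\fint_Q|g_R - (g_R)_Q|$, absorbing the $1/R$ factor against the finite $L^p$-norm of $\nabla g$ and the mass of $g$ on the annulus. A case analysis on the size of the cube relative to $R$, in the spirit of the short/medium/long decomposition used earlier in the paper, then organizes the bounds into a single quantity that can be made arbitrarily small by first choosing $g$ close to $f$ in $JN_p$ and then taking $R$ large enough.
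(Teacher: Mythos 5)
First, note that the paper does not prove this theorem at all: it is quoted from Tao--Yang--Yuan \cite[Theorem 4.3]{vjnplahde}, so there is no in-paper proof to compare against; your proposal has to stand on its own. Your necessity direction does stand: $C_c^\infty \subset D_p(\mathbb{R}^n)\cap JN_p$ gives $CJN_p\subset VJN_p$ after taking closures, the bound $|Q|^{1/p}\fint_Q|\phi-\phi_Q|\le 2\|\phi\|_{L^1}|Q|^{1/p-1}$ handles compactly supported approximants, and the single-cube estimate $|Q|^{1/p}\fint_Q|h-h_Q|\le\|h\|_{JN_p}$ transfers the vanishing to $f$. That half is correct.

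The sufficiency direction, however, has a genuine gap: the central claim $\|g-g\eta_R\|_{JN_p}\to 0$ is false as stated. Take $f\equiv 1$, which satisfies both hypotheses, and $g\equiv 1\in D_p(\mathbb{R}^n)\cap JN_p$ as its approximant (indeed $\|f-g\|_{JN_p}=0$). Then $g-g\eta_R=1-\eta_R$ has a transition of height $1$ across the annulus $B(0,2R)\setminus B(0,R)$, and a single straddling cube $Q$ of side comparable to $R$ gives $|Q|^{1/p}\fint_Q|(1-\eta_R)-(1-\eta_R)_Q|\gtrsim R^{n/p}$, so $\|g-g\eta_R\|_{JN_p}\to\infty$. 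The proposed absorption mechanism cannot repair this: membership in $D_p\cap JN_p$ gives no global integrability of $g$ itself, so ``the mass of $g$ on the annulus'' is uncontrolled (it is of order $R^n$ in the example, overwhelming the $1/R$ gain from $|\nabla\eta_R|$), and the large-scale vanishing hypothesis --- which is exactly what must rule out such behaviour --- is never brought to bear quantitatively on the straddling and exterior cubes. A workable repair must first subtract the constant $b=\lim_{m\to\infty}f_{Q_m}$ (this limit exists by the Cauchy argument used in the paper's $L^{p,\infty}/\mathbb{R}$ embedding theorem, and subtracting it is harmless because the $JN_p$ seminorm and both hypotheses are invariant under adding constants), and then use the hypothesis $\lim_{a\to\infty}\sup_{l(Q)\ge a}|Q|^{1/p}\fint_Q|f-f_Q|=0$ to show that $f-b$ is small in an averaged sense at infinity before cutting off; without this step the truncation argument does not go through, so as written the proof of the harder implication is incomplete.
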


It follows directly from the definition of $CJN_p$ that $L^p \subset CJN_p$, by density of $C_c^{\infty}$ in $L^p$ and by Equation (\ref{hölder}).
Furthermore we notice from Theorems \ref{vjnprnlause} and \ref{cjnplause} and from Definition \ref{jnprn} that adding a constant to a function doesn't affect whether the function is in $JN_p$ or in the subspaces $VJN_p$ or $CJN_p$. Thus if $f \in L^p$, then $f+c \in CJN_p$ for any constant $c$. If we define $L^p / \mathbb{R}$ the same way as $L^{p,\infty} / \mathbb{R}$, then it is immediate that
\begin{equation*}
L^p / \mathbb{R}
\subseteq CJN_p
\subseteq VJN_p
\subseteq JN_p.
\end{equation*}

Again we can ask whether these inclusions are strict. 
We examine this by extending the previous functions into multidimensional versions in the whole space.

\begin{lemma}
\label{ekalisäys}
Let $I \subset \mathbb{R}$ be a bounded interval such that $|I| = L$. Without loss of generality we may assume that $I = [0,L]$. Let $f \in L^1(I)$, $n \in \mathbb{Z}_+$ and $1 < p < \infty$. Define the cube $Q_0 := [0,L]^{n} \subset \mathbb{R}^n$. Define the function
\begin{equation*}
\hat{f}(x_1 , x_2 , ... , x_n) := \begin{cases}
f(x_1), \text{  if  } 0 \leq x_i \leq L \text{  for all  } i \geq 1
\\
f_I \text{  elsewhere.}
\end{cases}
\end{equation*}
Then $\hat{f} \in JN_p$ if and only if $f \in JN_p(I)$ and
\begin{equation*}
2^{\frac{1-n}{p}} \| f \|_{JN_p(I)} L^{\frac{n-1}{p}}
\leq \big\| \hat{f} \big\|_{JN_p}
\leq c(n,p) \| f \|_{JN_p(I)} L^{\frac{n-1}{p}}.
\end{equation*}
\end{lemma}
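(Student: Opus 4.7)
My plan is to piggyback on Proposition~\ref{laajennus} for the cubes sitting inside $Q_0$ and to treat the rest by a direct estimate built around Theorem~\ref{embedding}. Iterating Proposition~\ref{laajennus} $(n-1)$ times along the coordinates $x_2,\ldots,x_n$ produces the trivial extension $\tilde f$ to $Q_0 = [0,L]^n$, which satisfies
\begin{equation*}
2^{-(n-1)/p} L^{(n-1)/p}\|f\|_{JN_p(I)}
\leq \|\tilde f\|_{JN_p(Q_0)}
\leq L^{(n-1)/p}\|f\|_{JN_p(I)}.
\end{equation*}
Since $\hat f$ coincides with $\tilde f$ on $Q_0$ and any disjoint collection of subcubes of $Q_0$ is admissible in the supremum defining $\|\hat f\|_{JN_p(\mathbb{R}^n)}$, the lower bound is immediate: $\|\hat f\|_{JN_p(\mathbb{R}^n)} \geq \|\tilde f\|_{JN_p(Q_0)} \geq 2^{(1-n)/p}L^{(n-1)/p}\|f\|_{JN_p(I)}$.

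For the upper bound, fix an arbitrary disjoint collection $\{Q_i\}$ in $\mathbb{R}^n$ and partition it into three classes: (a) $Q_i\subset Q_0$, (b) $Q_i\cap Q_0=\emptyset$, and (c) straddling cubes. Class (b) contributes $0$ because $\hat f\equiv f_I$ there, and class (a) contributes at most $\|\tilde f\|_{JN_p(Q_0)}^p \leq L^{n-1}\|f\|_{JN_p(I)}^p$ since these cubes form an admissible family for $\|\tilde f\|_{JN_p(Q_0)}$. For class (c) write $J_{Q_i} := \pi_1(Q_i)\cap I$ and let $A_{Q_i}$ denote the $(n-1)$-volume of the projection of $Q_i\cap Q_0$ onto $x_2,\ldots,x_n$. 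Choosing $c=f_I$ in the infimum form of the oscillation (Remark~\ref{infversio}) and using Fubini together with $\hat f\equiv f_I$ off $Q_0$ gives
\begin{equation*}
\fint_{Q_i}|\hat f - \hat f_{Q_i}|
\leq 2\fint_{Q_i}|\hat f - f_I|
= \frac{2A_{Q_i}}{|Q_i|}\int_{J_{Q_i}}|f - f_I|.
\end{equation*}
The weak-$L^p$ layer-cake consequence of Theorem~\ref{embedding} applied to $f - f_I$ yields $\int_{J_{Q_i}}|f - f_I|\leq c(p)\|f\|_{JN_p(I)}|J_{Q_i}|^{1-1/p}$. Raising to the $p$-th power, multiplying by $|Q_i|=s_i^n$, and inserting the bounds $A_{Q_i}\leq \min(s_i,L)^{n-1}$ and $|J_{Q_i}|\leq\min(s_i,L)$ collapses the per-cube estimate to $c(p)\|f\|_{JN_p(I)}^p\,s_i^{n-1}$ when $s_i\leq L$ and to $c(p)\|f\|_{JN_p(I)}^p\,L^{np-1}s_i^{-n(p-1)}$ when $s_i>L$.

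The main technical obstacle is summing these per-cube bounds over the straddling family. The $s_i>L$ cubes are easy: in each dyadic scale $[2^kL,2^{k+1}L)$ only $C(n)$ disjoint cubes can intersect $Q_0$ by a volume argument, and $\sum_{k\geq 0}2^{-kn(p-1)}$ converges since $p>1$, contributing $O(L^{n-1}\|f\|_{JN_p(I)}^p)$. The small-straddling case rests on a classification by \emph{type}: to each $Q_i$ associate the nonempty set $S\subseteq\{1,\ldots,n\}$ of coordinates in which it straddles, together with a choice of endpoint $0$ or $L$ for each $j\in S$ telling which side of $[0,L]$ is crossed; there are exactly $3^n-1$ such types. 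For a fixed type with $|S|=k$ the associated corner sub-face $F\subset\partial Q_0$ has $(n-k)$-volume $L^{n-k}$, and because each projection $\pi_j(Q_i)$ with $j\in S$ contains the chosen endpoint in its interior, disjointness of $\{Q_i\}$ forces the intersections $Q_i\cap F$ to be pairwise disjoint $(n-k)$-dimensional cubes of side $s_i$; hence $\sum s_i^{n-k}\leq L^{n-k}$, and using $s_i\leq L$ gives $\sum s_i^{n-1}\leq L^{k-1}\sum s_i^{n-k}\leq L^{n-1}$. Summing over the finitely many types yields the same $O(L^{n-1}\|f\|_{JN_p(I)}^p)$ control for the small-straddling contribution, and combining the three classes completes the upper bound $\|\hat f\|_{JN_p(\mathbb{R}^n)}\leq c(n,p)L^{(n-1)/p}\|f\|_{JN_p(I)}$.
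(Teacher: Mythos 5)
Your proof is correct, and its skeleton matches the paper's: lower bound by restricting to $Q_0$ and iterating Proposition \ref{laajennus}; splitting an arbitrary disjoint family into cubes inside $Q_0$, cubes missing $Q_0$ (which contribute nothing), and straddling cubes; and a per-straddling-cube estimate obtained by subtracting $f_I$, factoring $Q\cap Q_0$ as a product via Fubini, and invoking the layer-cake consequence of Theorem \ref{embedding}, $\int_J|f-f_I|\le c(p)\|f\|_{JN_p(I)}|J|^{1-1/p}$. Where you genuinely diverge is in summing the straddling contributions. The paper keeps the sharper factor $|Q\cap Q_0|^{1-p}$ (rather than $|Q|^{1-p}$), which collapses the per-cube bound to $c(p)\|f\|_{JN_p(I)}^p|K|$ with $K$ the $(n-1)$-dimensional cross-section, and then makes a single geometric observation: $|K|\le|Q\cap\partial Q_0|$, so disjointness gives $\sum_Q|Q\cap\partial Q_0|\le|\partial Q_0|=2nL^{n-1}$ in one stroke, uniformly over all cube sizes. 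You instead retain $|Q|^{1-p}$, which costs you the uniform bound but buys decay $s_i^{-n(p-1)}$ for cubes of side $s_i>L$ (summed by your per-dyadic-scale counting argument, which indeed needs $p>1$), while for $s_i\le L$ you replace the boundary-packing step by the $3^n-1$ corner-type classification and the face-packing estimate $\sum_i s_i^{n-k}\le L^{n-k}$; that estimate is sound, since for a fixed type the projections in the non-straddled coordinates lie in $[0,L]$, so each $Q_i\cap F$ is an $(n-k)$-cube of side $s_i$ inside $F$, and pairwise disjointness of the $Q_i$ (the paper's convention for admissible families) makes these slices disjoint. Both routes yield $c(n,p)L^{n-1}\|f\|_{JN_p(I)}^p$; the paper's is shorter because the single inequality $|K|\le|Q\cap\partial Q_0|$ handles large and small straddling cubes simultaneously, whereas your version is more elaborate but makes the scale-by-scale behaviour explicit and never needs to locate $K$ on $\partial Q_0$. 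Only cosmetic points remain: you should note that degenerate straddling cubes touching $Q_0$ in a set of measure zero contribute $F(Q_i)=0$, and that the "endpoint in the interior" remark is only needed if one works with cubes having merely disjoint interiors.
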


\begin{proof}
If $\hat{f} \in JN_p$, then clearly if we restrict $\hat{f}$ to $Q_0$, we have $\hat{f} \in JN_p(Q_0)$. Then Proposition \ref{laajennus} implies that
\begin{equation*}
2^{\frac{1-n}{p}} \| f \|_{JN_p(I)} L^{\frac{n-1}{p}}
\leq \big\| \hat{f} \big\|_{JN_p(Q_0)}
\leq \| \hat{f} \|_{JN_p}
\end{equation*}
and so $f \in JN_p(I)$.

Now assume that $f \in JN_p(I)$. Without loss of generality we may assume that $f_I = 0$. Let $Q \subset \mathbb{R}^n$ be a cube such that $Q \cap Q_0 \neq \varnothing$ and $Q \setminus Q_0 \neq \varnothing$. Since $Q \cap Q_0$ is an intersection of two cubes, it is a rectangle. Therefore we can denote $Q \cap Q_0 = J_1 \times J_2 \times ... \times J_n = J_1 \times K$, where $J_i$ are intervals such that $J_i \subset [0,L]$ for every $i \geq 1$. The notation $|\cdot|$ for sets below refers to the Lebesgue measure, which can be either 1-, $n$- or $n-1$-dimensional depending on the context.
Then
\begin{align*}
|Q| \left( \fint_{Q} \left| \hat{f} - \hat{f}_Q \right| \right)^p
&\leq 2^p |Q \cap Q_0|^{1-p} \left( \int_{Q \cap Q_0} \big| \hat{f} \big| \right)^p
\\
&= 2^p |J_1|^{1-p} |K|^{1-p} \left( \int_{K} \int_{J_1} |f(x_1)| dx_1 dx_2 ... dx_n \right)^p
\\
&= 2^p |K| |J_1|^{1-p} \left( \int_0^{\infty} |\{ x \in J_1 : |f(x)| > t \}| dt \right)^p
\\
&\leq 2^p |K| |J_1|^{1-p} \Bigg( \int_0^{|J_1|^{-1/p} \| f \|_{L^{p,\infty}(J_1)}} |J_1| dt
\\
&+ \int_{|J_1|^{-1/p} \| f \|_{L^{p,\infty}(J_1)}}^{\infty} \frac{\| f \|_{L^{p,\infty}(J_1)}^p}{t^p} dt \Bigg)^p
\\
&= 2^p |K| |J_1|^{1-p} \Bigg( |J_1|^{1 - 1/p} \| f \|_{L^{p,\infty}(J_1)}
\\
&+ \| f \|_{L^{p,\infty}(J_1)}^p \frac{1}{p-1} (|J_1|^{-1/p} \| f \|_{L^{p,\infty}(J_1)})^{1-p} \Bigg)^p
\\
&\leq \left( \frac{2p}{p-1} \right)^p \| f \|_{L^{p,\infty}(I)}^p |K|
\\
&\leq c(p) \| f \|_{JN_p(I)}^p |K|.
\end{align*}
Here we used the triangle inequality, Cavalieri's principle and Theorem \ref{embedding}. For the constant we have $\lim_{p \to 1} c(p) = \infty$, even though the result would clearly hold also, if $p = 1$.
Therefore a better constant could perhaps be attained in this inequality. However we were not able to produce that.

Let us look at $Q \cap Q_0 = J_1 \times ... \times J_n$. Clearly we have $|J_i| \leq l(Q)$ for every $i$. Further we know that $|J_j| < l(Q)$ for at least one $j$ - otherwise we would have $|Q \cap Q_0| = |Q|$.

If $|J_1| < l(Q)$, then the cube $Q$ intersects with the boundary of $I$. This implies that $K$, the $n-1$-dimensional side of $Q \cap Q_0$, is located at the boundary of $Q_0$. Therefore we can estimate
\begin{equation*}
|K|
\leq |\{ x \in Q \cap Q_0 : x_1 \in \partial I \}|
\leq |\{ x \in Q \cap Q_0 : x \in \partial Q_0 \}|
= |Q \cap \partial Q_0|.
\end{equation*}
Here the first inequality is an equality if $Q$ intersects only one of the boundary points of $I$. If it intersects the other boundary point, we have $|\{ x \in Q \cap Q_0 : x_1 \in \partial I \}| = 2 |K|$.

If $|J_1| = l(Q)$, then $|J_i| < l(Q)$ for some $i \geq 2$. By the same reasoning as before we know that $Q$ intersects the boundary of the $i$:th dimensional interval $[0,L]$ and therefore
\begin{align*}
|K|
&= |J_2| |J_3| ... |J_n|
\leq |J_1| |J_2| ... |J_{i-1}| |J_{i+1}| ... |J_n|
\\
&\leq |\{ x \in Q \cap Q_0 : x_i \in \partial [0,L] \}|
\leq |\{ x \in Q \cap Q_0 : x \in \partial Q_0 \}|
\\
&= |Q \cap \partial Q_0|.
\end{align*}
Thus in any case we have $|K| \leq |Q \cap \partial Q_0|$. Now let $\mathcal{Q}$ be a partition of $\mathbb{R}^n$ into disjoint cubes and define
\begin{equation*}
F(Q)
:= |Q| \left( \fint_Q |\hat{f} - \hat{f}_Q| \right)^p. 
\end{equation*}
Then
\begin{align*}
\sum_{Q \in \mathcal{Q}} F(Q)
&= \sum_{\substack{Q \in \mathcal{Q} \\ Q \subset Q_0}} F(Q) + \sum_{\substack{Q \in \mathcal{Q} \\ Q \subset \mathbb{R}^n \setminus Q_0}} F(Q) + \sum_{\substack{Q \in \mathcal{Q} \\ Q \cap Q_0 \neq \varnothing \\ Q \setminus Q_0 \neq \varnothing }} F(Q)
\\
&\leq \big\| \hat{f} \big\|_{JN_p(Q_0)}^p + 0 + \sum_{\substack{Q \in \mathcal{Q} \\ Q \cap Q_0 \neq \varnothing \\ Q \setminus Q_0 \neq \varnothing }} c(p) \| f \|_{JN_p(I)}^p |K|
\\
&\leq \| f \|_{JN_p(I)}^p L^{n-1} + c(p) \| f \|_{JN_p(I)}^p \sum_{\substack{Q \in \mathcal{Q} \\ Q \cap Q_0 \neq \varnothing \\ Q \setminus Q_0 \neq \varnothing }} |Q \cap \partial Q_0|
\\
&\leq \| f \|_{JN_p(I)}^p \left( L^{n-1} + c(p) |\partial Q_0| \right)
\\
&= \| f \|_{JN_p(I)}^p \left( L^{n-1} + c(p) 2 n L^{n-1} \right)
\\
&= c(n,p) \| f \|_{JN_p(I)}^p L^{n-1}.
\end{align*}
Here we used Proposition \ref{laajennus}. Finally by taking the supremum this gives us the result
\begin{equation*}
\big\| \hat{f} \big\|_{JN_p}
\leq c(n,p) \| f \|_{JN_p(I)} L^{\frac{n-1}{p}}.
\end{equation*}
\end{proof}

The moral is that if we have any function in $JN_p(I)$ for a bounded interval $I$, we can extend the function into multiple dimensions and into the whole space so that we get a function in $JN_p(\mathbb{R}^n)$.
It is easy to see that $\hat{f} \in L^p(\mathbb{R}^n) / \mathbb{R}$ if and only if $f \in L^p(I)$ and $\hat{f} \in L^{p,\infty}(\mathbb{R}^n) / \mathbb{R}$ if and only if $f \in L^{p,\infty}(I)$.
The same results hold also if $n=1$.

\begin{lemma}
\label{tokalisäys}
Let $I \subset \mathbb{R}$ be a bounded interval such that $|I| = L$. Without loss of generality we may assume that $I = [0,L]$. Let $f \in L^1(I)$, $n \in \mathbb{Z}_+$ and $1 < p < \infty$. Define the cube $Q_0 := [0,L]^{n} \subset \mathbb{R}^n$. Define the function
\begin{equation*}
\hat{f}(x_1 , x_2 , ... , x_n) := \begin{cases}
f(x_1), \text{  if  } 0 \leq x_i \leq L \text{  for all  } i \geq 1
\\
f_I \text{  elsewhere.}
\end{cases}
\end{equation*}
Then if $\hat{f} \in VJN_p$, we have $f \in VJN_p(I)$.
On the other hand if $f \in VJN_p(I)$ and we assume further that
\begin{equation}
\label{auki}
\lim_{a \to 0}
\sup_{\substack{J \subset I \\ |J| \leq a }} |J|^{1-p} \left( \int_{J} |f| \right)^p
= 0,
\end{equation}
where the supremum is taken over all intervals $J$ such that $|J| \leq a$, then $\hat{f} \in CJN_p$.
\end{lemma}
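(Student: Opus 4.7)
The plan is to prove the two implications separately, using the characterizations of $VJN_p$ and $CJN_p$ in Theorems \ref{vjnprnlause} and \ref{cjnplause} together with the bounded-cube tools from Proposition \ref{laajennus2} and Lemma \ref{ekalisäys}.

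For the first direction, the key observation is that any collection $\{Q_i\}$ of disjoint subcubes of $Q_0$ with $l(Q_i) \leq a$ is \emph{a fortiori} such a collection in $\mathbb{R}^n$, so the small-cube supremum for $\hat f|_{Q_0}$ is bounded by that of $\hat f$ over $\mathbb{R}^n$. Thus $\hat f|_{Q_0} \in VJN_p(Q_0)$ by Theorem \ref{vjnprnlause}, and iterating Proposition \ref{laajennus2} (peeling off one coordinate at a time) yields $f \in VJN_p(I)$.

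For the converse, by Theorem \ref{cjnplause} we must verify both $\hat f \in VJN_p$ and the large-cube vanishing; membership $\hat f \in JN_p$ is already provided by Lemma \ref{ekalisäys}. For the $VJN_p$ part, WLOG assume $f_I = 0$ (constants do not affect any of these norms), and split an admissible family $\mathcal Q$ with $l(Q) \leq a$ into interior cubes $Q \subset Q_0$, exterior cubes with $Q \cap Q_0 = \varnothing$, and boundary cubes meeting $\partial Q_0$. Exterior cubes contribute zero since $\hat f \equiv 0$ there; interior cubes contribute a quantity vanishing as $a \to 0$ because $\hat f|_{Q_0} \in VJN_p(Q_0)$ by iterating Proposition \ref{laajennus2} from $f \in VJN_p(I)$. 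For a boundary cube, writing $Q \cap Q_0 = J_1 \times K$ and repeating the first steps of the proof of Lemma \ref{ekalisäys} yields
\begin{equation*}
|Q| \left( \fint_Q |\hat f - \hat f_Q| \right)^p \leq 2^p |K| |J_1|^{1-p} \left( \int_{J_1} |f| \right)^p.
\end{equation*}
Assumption (\ref{auki}) provides $\varepsilon(a) \to 0$ with $|J_1|^{1-p} (\int_{J_1} |f|)^p \leq \varepsilon(a)$ whenever $|J_1| \leq a$, and summing $|K| \leq |Q \cap \partial Q_0|$ over boundary cubes gives a total of at most $2^p \varepsilon(a) |\partial Q_0| = 2^{p+1} n L^{n-1} \varepsilon(a)$, which vanishes.

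For the large-cube condition, note that $\hat f - f_I$ is supported in $Q_0$, so by the standard mean-oscillation triangle inequality $\int_Q |\hat f - \hat f_Q| \leq 2 \int_Q |\hat f - f_I| = 2\int_{Q \cap Q_0} |\hat f - f_I| \leq 2 L^{n-1} \int_I |f - f_I|$, a constant independent of $Q$. Hence $|Q|^{1/p} \fint_Q |\hat f - \hat f_Q| \leq C |Q|^{1/p - 1}$, which tends to $0$ as $l(Q) \to \infty$ since $p > 1$. The main obstacle is the boundary-cube estimate in the $VJN_p$ step: the naive Lemma \ref{ekalisäys} bound via $\|f\|_{L^{p,\infty}(I)}$ only sums to something of order $|\partial Q_0|$, which is independent of $a$ and therefore insufficient for $VJN_p$. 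Assumption (\ref{auki}) is precisely the stronger local-integrability condition needed to replace this crude bound with one that vanishes with $a$.
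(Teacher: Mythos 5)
Your proposal is correct and follows essentially the same route as the paper: restriction to $Q_0$ plus (iterated) Proposition \ref{laajennus2} for the first implication, and for the converse the same split into interior, exterior and boundary cubes with the bound $2^p|K||J_1|^{1-p}\left(\int_{J_1}|f|\right)^p$, the estimate $|K|\leq|Q\cap\partial Q_0|$ summing to $|\partial Q_0|$, assumption (\ref{auki}) for the boundary contribution, and the support-in-$Q_0$ decay argument for the large-cube condition of Theorem \ref{cjnplause}. No gaps.
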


\begin{proof}
If $\hat{f} \in VJN_p$ and we restrict $\hat{f}$ to $Q_0$, we clearly have $\hat{f} \in VJN_p(Q_0)$. Then Proposition \ref{laajennus2} implies that $f \in VJN_p(I)$.


Now assume that $f \in VJN_p(I)$ and assumption (\ref{auki}) holds. Without loss of generality we may assume that $f_I = 0$. First we notice that then $f \in JN_p(I)$, which means that $\hat{f} \in JN_p$, by Lemma \ref{ekalisäys}.
Now let $a > 0$. Let $Q \subset \mathbb{R}^n$ be a cube such that $Q \cap Q_0 \neq \varnothing$, $Q \setminus Q_0 \neq \varnothing$ and $l(Q) \leq a$.
Similar to the previous proof, we can write $Q \cap Q_0 = J_1 \times J_2 \times ... \times J_n = J_1 \times K$. Then
\begin{equation*}
|Q| \left( \fint_{Q} \left| \hat{f} - \hat{f}_Q \right| \right)^p
\leq 2^p |K| |J_1|^{1-p} \left( \int_{J_1} |f| \right)^p
\end{equation*}
as in the previous proof. Further we have $|J_1| \leq l(Q) \leq a$, which means that
\begin{equation*}
|J_1|^{1-p} \left( \int_{J_1} |f| \right)^p
\leq \sup_{\substack{J \subset I \\ |J| \leq a }} |J|^{1-p} \left( \int_{J} |f| \right)^p.
\end{equation*}
As in the previous proof, we also have $|K| \leq |Q \cap \partial Q_0|$. Now let $\mathcal{Q}$ be a partition of $\mathbb{R}^n$ into disjoint cubes such that the side length of each cube is at most $a$. Define
\begin{equation*}
F(Q)
:= |Q| \left( \fint_Q |\hat{f} - \hat{f}_Q| \right)^p. 
\end{equation*}
Then
\begin{align*}
\sum_{\substack{Q \in \mathcal{Q} \\ Q \cap Q_0 \neq \varnothing \\ Q \setminus Q_0 \neq \varnothing }} F(Q)
&\leq \sum_{\substack{Q \in \mathcal{Q} \\ Q \cap Q_0 \neq \varnothing \\ Q \setminus Q_0 \neq \varnothing }} 2^p |K| \sup_{\substack{J \subset I \\ |J| \leq a }} |J|^{1-p} \left( \int_{J} |f| \right)^p
\\
&\leq 2^p \sup_{\substack{J \subset I \\ |J| \leq a }} |J|^{1-p} \left( \int_{J} |f| \right)^p \sum_{\substack{Q \in \mathcal{Q} \\ Q \cap Q_0 \neq \varnothing \\ Q \setminus Q_0 \neq \varnothing }} |Q \cap \partial Q_0|
\\
&\leq 2^p \sup_{\substack{J \subset I \\ |J| \leq a }} |J|^{1-p} \left( \int_{J} |f| \right)^p |\partial Q_0|
\\
&= 2^p \cdot 2 n L^{n-1} \sup_{\substack{J \subset I \\ |J| \leq a }} |J|^{1-p} \left( \int_{J} |f| \right)^p.
\end{align*}
Therefore
\begin{align*}
\sum_{Q \in \mathcal{Q}} F(Q)
&= \sum_{\substack{Q \in \mathcal{Q} \\ Q \subset Q_0}} F(Q) + \sum_{\substack{Q \in \mathcal{Q} \\ Q \subset \mathbb{R}^n \setminus Q_0}} F(Q) + \sum_{\substack{Q \in \mathcal{Q} \\ Q \cap Q_0 \neq \varnothing \\ Q \setminus Q_0 \neq \varnothing }} F(Q)
\\
&\leq \sup_{\mathcal{Q}} \sum_{\substack{Q \in \mathcal{Q} \\ Q \subset Q_0 }} F(Q) + 0 + 2^p \cdot 2 n L^{n-1} \sup_{\substack{J \subset I \\ |J| \leq a }} |J|^{1-p} \left( \int_{J} |f| \right)^p.
\end{align*}
If we restrict $\hat{f}$ to $Q_0$, then Proposition \ref{laajennus2}, together with $f \in VJN_p(I)$, implies that $\hat{f} \in VJN_p(Q_0)$. This gives us that
\begin{equation*}
\lim_{a \to 0}
\sup_{\mathcal{Q}} \sum_{\substack{Q \in \mathcal{Q} \\ Q \subset Q_0 }} F(Q)
= 0.
\end{equation*}
For the other term we have a similar limit by assumption (\ref{auki}).
Then we have
\begin{equation*}
\lim_{a \to 0}
\sup_{\mathcal{Q}}
\sum_{Q \in \mathcal{Q}} F(Q)
= 0
\end{equation*}
and so Theorem \ref{vjnprnlause} gives us that $\hat{f} \in VJN_p$.
Finally we know that $f \in L^1(I)$ and $\hat{f}$ is supported in $Q_0$. Therefore
\begin{align*}
\lim_{a \to \infty}
\sup_{\substack{Q \subset \mathbb{R}^n \\ l(Q) \geq a}}
|Q|^{1/p} \fint_Q \left| \hat{f} - \hat{f}_Q \right|
&\leq \lim_{a \to \infty}
\sup_{\substack{Q \subset \mathbb{R}^n \\ l(Q) \geq a}}
|Q|^{1/p} 2 \fint_Q \big| \hat{f} \big|
\\
&\leq \lim_{a \to \infty}
\sup_{\substack{Q \subset \mathbb{R}^n \\ l(Q) \geq a}}
|Q|^{1/p - 1} 2 \int_{Q_0} \big| \hat{f} \big|
\\
&= 2 L^{n-1} \int_{I} |f| \lim_{a \to \infty}
(a^n)^{1/p - 1}
= 0,
\end{align*}
and so Theorem \ref{cjnplause} gives us that $\hat{f} \in CJN_p$. This completes the proof.
\end{proof}

\begin{lemma}
\label{kolmaslisäys}
Let $g_0$ be the function in Proposition \ref{vjnpmuttaeilp} such that $g_0 \in VJN_p(Q_0) \setminus L^p(Q_0)$ with $Q_0 \subset \mathbb{R}$. Then we have
\begin{equation*}
\lim_{a \to 0}
\sup_{\substack{J \subset Q_0 \\ |J| \leq a }} |J|^{1-p} \left( \int_{J} g_0 \right)^p
= 0,
\end{equation*}
where the supremum is taken over all intervals $J$ such that $|J| \leq a$.
\end{lemma}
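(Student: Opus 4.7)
I will mimic the case decomposition of the proof of Proposition~\ref{vjnpmuttaeilp}. Given $a>0$, choose the integer $k$ with $\delta_k\ge a>\delta_{k+1}$, so that $k\to\infty$ as $a\to 0$. Let $J\subset Q_0$ be any interval with $|J|\le a$; if $J$ misses the support of $g_0$, the quantity is zero, so assume $J$ meets some $\hat I$ and let $I_J$ be the unique widest such dyadic interval with $|I_J|=2^{-i}$.

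Splitting $\int_J g_0=\int_{J\cap\hat I_J}g_0+\int_{J\setminus\hat I_J}g_0$ and applying $(x+y)^p\le 2^p(x^p+y^p)$,
\[
|J|^{1-p}\Bigl(\int_J g_0\Bigr)^p\le c(p)\left[|J|^{1-p}\bigl(|J\cap\hat I_J|\,h_i\bigr)^p+|J|^{1-p}\Bigl(\int_{J\setminus\hat I_J}g_0\Bigr)^p\right].
\]
The inequality $(|J\cap\hat I_J|/|J|)^p\le|J\cap\hat I_J|/|J|$ bounds the first term by $\min(|J|,l_i)\,h_i^p$. For the second term I use $g_0\le g$ pointwise (the heights satisfy $(2^{i^2}/i)^{1/p}\le 2^{i^2/p}$) and invoke \cite[Lemma~3.8]{nontriviality}, which gives $|J|^{1-p}\bigl(\int_{J\setminus\hat I_J}g\bigr)^p\le c(p)\,2^{-ip}$ whenever $|J\setminus\hat I_J|>\delta_i$; otherwise this term is zero. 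Moreover, $J$ cannot reach any sibling or cousin of $\hat I_J$, because such an $\hat I'$ sits on the far side of a common ancestor tower $\hat I_P$, which would force $J$ to intersect $\hat I_P$ as well and contradict the maximality of $|I_J|$, so only descendants of $I_J$ contribute to the second integral.

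Case $i\le k$: then $|J\setminus\hat I_J|\le|J|\le a\le\delta_k\le\delta_i$, the descendants term is zero, and the tower term gives $|J|\,h_i^p\le a\,h_k^p\le d_k h_k^p=2^{-(k+1)^2}\cdot 2^{k^2}/k=2^{-2k-1}/k$, using that $h_i^p=2^{i^2}/i$ is increasing in $i$ and $\delta_k\le d_k=2^{-(k+1)^2}$. Case $i>k$: the tower term is at most $l_i h_i^p=2^{-i-1/4}/i$, which is decreasing in $i$ and hence bounded by $c\,2^{-k}/k$, while the descendants term (when non-zero) is bounded by $c(p)\,2^{-p(k+1)}$.

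Taking the supremum over admissible $J$ yields $\sup_{|J|\le a}|J|^{1-p}\bigl(\int_J g_0\bigr)^p\le c(p)(2^{-k}/k+2^{-pk})\to 0$ as $k\to\infty$, i.e., as $a\to 0$. The main technical subtlety I anticipate is the sibling/cousin exclusion and verifying that \cite[Lemma~3.8]{nontriviality} transfers cleanly from $g$ to $g_0$ across the medium/long sub-cases; both are short consequences of the explicit dyadic fractal structure, but they require careful bookkeeping to ensure the bounds hold uniformly in $J$.
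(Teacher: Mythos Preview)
Your proof is correct and follows essentially the same strategy as the paper's: split according to the level $i$ of $I_J$ relative to a threshold determined by $a$, bound the tower contribution by $\min(|J|,l_i)h_i^p$, and control the descendants contribution via $g_0\le g$ and \cite[Lemma~3.8]{nontriviality}. The only cosmetic differences are that the paper passes entirely to $g$ from the outset and introduces a second index $m$ with $\delta_m<|J|\le\delta_{m-1}$ (so the case split is $i<m$ versus $i\ge m$), whereas you work directly with the single threshold $k$ tied to $a$; your simpler bookkeeping suffices because $h_i^p=2^{i^2}/i$ is increasing and $l_ih_i^p$ is decreasing, exactly as you use.
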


\begin{proof}
It is enough that we prove this result for the function $g$ from Proposition \ref{jnpmuttaeivjnp}, because clearly $g_0 \leq g$.
Let $a$ be a small positive number such that $\delta_{N} < a \leq \delta_{N-1}$ for some integer $N \geq 2$. Let $J \subset Q_0$ be an interval such that $|J| \leq a$. Then there is some integer $m \geq N$ such that $\delta_m < |J| \leq \delta_{m-1}$. Like in earlier cases, if $J$ does not intersect any interval $\hat{I}$, then
\begin{equation*}
|J|^{1-p} \left( \int_{J} g \right)^p
= 0.
\end{equation*}
Therefore we assume that $J$ does intersect some interval $\hat{I}$. Then there is a unique widest interval $I_J$ such that $J$ intersects $\hat{I}_J$.

First assume that $|I_J| = |I| = 2^{-i} > 2^{-m}$. Then the fact that $|J \setminus \hat{I}| \leq |J| \leq \delta_{m-1} \leq \delta_i$ implies that $\hat{I}$ is the only interval that $J$ intersects. Then
\begin{align*}
|J|^{1-p} \left( \int_{J} g \right)^p
&\leq |J|^{1-p} \left( \int_{J} h_i \right)^p
= |J| h_i^p
\leq \delta_{m-1} h_{m-1}^p
\leq 2^{-m^2} \cdot 2^{(m-1)^2}
\\
&= 2^{1 - 2m}
\leq 2^{-N}.
\end{align*}
As $a \to 0$, we know that $N \to \infty$. Therefore this upper bound converges to 0.

Assume now that $|I_J| = 2^{-i} \leq 2^{-m}$. Then $J$ intersects at most one interval of length $|\hat{I}| = l_m$, two intervals of length $|\hat{I}| = l_{m+1}$ etc. If $I$ is an interval of length $2^{-m}$, then in this case we have
\begin{equation*}
\int_J g
\leq \min(|J|,l_m) h_m + \sum_{\substack{I' \subsetneq I \\ I' \text{  dyadic  }}} \int_{\hat{I'}} g.
\end{equation*}
For the second term we have the following upper bound.
\begin{equation*}
|J|^{1-p} \left( \sum_{\substack{I' \subsetneq I \\ I' \text{  dyadic  }}} \int_{\hat{I'}} g \right)^p
\leq c 2^{-mp}.
\end{equation*}
For the proof we refer to \cite[Lemma 3.8]{nontriviality}.
For the first term we notice that
\begin{equation*}
|J|^{1-p} \min(|J|,l_m)^p
\leq l_m
\end{equation*}
whether $|J| < l_m$ or $l_m \leq |J|$.
This means that
\begin{align*}
|J|^{1-p} \left( \int_J g \right)^p
&\leq c |J|^{1-p} \left( \min(|J|,l_m) h_m \right)^p + c |J|^{1-p} \left( \sum_{\substack{I' \subsetneq I \\ I' \text{  dyadic  }}} \int_{\hat{I'}} g \right)^p
\\
&\leq c l_m h_m^p + c 2^{-mp}
\leq c 2^{-m^2-m} \cdot 2^{m^2} + c 2^{-m}
\leq c 2^{-N}.
\end{align*}

Hence whenever $|J| \leq a$, we have
\begin{equation*}
|J|^{1-p} \left( \int_J g \right)^p
\leq c(p) 2^{-N}.
\end{equation*}
This means that
\begin{equation*}
\lim_{a \to 0}
\sup_{\substack{J \subset Q_0 \\ |J| \leq a }} |J|^{1-p} \left( \int_{J} g \right)^p
\leq \lim_{N \to \infty} c(p) 2^{-N}
= 0
\end{equation*}
and therefore functions $g$ and $g_0$ satisfy condition (\ref{auki}).
\end{proof}

Now Lemmas \ref{ekalisäys}, \ref{tokalisäys} and \ref{kolmaslisäys} together with the examples $g \in JN_p(Q_0) \setminus VJN_p(Q_0)$ from Proposition \ref{jnpmuttaeivjnp} and $g_0 \in VJN_p(Q_0) \setminus L^p(Q_0)$ from Proposition \ref{vjnpmuttaeilp}, imply the following corollary.

\begin{corollary}
For any $n \in \mathbb{Z}_+$ there exists a function $\hat{g} \in JN_p(\mathbb{R}^n) \setminus VJN_p(\mathbb{R}^n)$ and a function $\hat{g}_0 \in CJN_p(\mathbb{R}^n) \setminus L^p(\mathbb{R}^n) / \mathbb{R}$.
\end{corollary}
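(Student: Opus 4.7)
The plan is purely to assemble the one-dimensional examples of Section \ref{vjnposio} with the extension machinery developed in Lemmas \ref{ekalisäys}, \ref{tokalisäys} and \ref{kolmaslisäys}. Concretely, let $g$ and $g_0$ denote the interval-valued functions produced in Propositions \ref{jnpmuttaeivjnp} and \ref{vjnpmuttaeilp}, and let $\hat{g}$ and $\hat{g}_0$ denote their respective trivial extensions in the sense of the formula displayed in Lemma \ref{ekalisäys} (extending by $g_I$, resp.\ $(g_0)_I$, outside of the cube $Q_0 = I^n$). My claim is that these two functions $\hat{g}, \hat{g}_0$ already witness the corollary; no further modification is needed.

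For $\hat{g} \in JN_p(\mathbb{R}^n) \setminus VJN_p(\mathbb{R}^n)$: the membership $\hat{g} \in JN_p(\mathbb{R}^n)$ is immediate from Lemma \ref{ekalisäys} applied to $g \in JN_p(I)$ (Proposition \ref{jnpmuttaeivjnp}). For the exclusion, I would argue by contradiction using the \emph{first} assertion of Lemma \ref{tokalisäys}: if $\hat{g}$ belonged to $VJN_p(\mathbb{R}^n)$, then $g$ would belong to $VJN_p(I)$, contradicting Proposition \ref{jnpmuttaeivjnp}.

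For $\hat{g}_0 \in CJN_p(\mathbb{R}^n) \setminus L^p(\mathbb{R}^n)/\mathbb{R}$: the membership requires three inputs in series. First, Proposition \ref{vjnpmuttaeilp} gives $g_0 \in VJN_p(I)$. Second, Lemma \ref{kolmaslisäys} verifies the auxiliary hypothesis (\ref{auki}) for $g_0$. Third, the \emph{second} assertion of Lemma \ref{tokalisäys} then upgrades this pair of facts to $\hat{g}_0 \in CJN_p(\mathbb{R}^n)$. For the exclusion, I would invoke the remark made immediately after the proof of Lemma \ref{ekalisäys}, namely that $\hat{f} \in L^p(\mathbb{R}^n)/\mathbb{R}$ holds iff $f \in L^p(I)$: since $g_0 \notin L^p(I)$ by Proposition \ref{vjnpmuttaeilp}, we conclude $\hat{g}_0 \notin L^p(\mathbb{R}^n)/\mathbb{R}$.

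There is no real obstacle here; the entire corollary is a bookkeeping exercise verifying that each hypothesis of Lemmas \ref{ekalisäys}--\ref{kolmaslisäys} has already been established for exactly the two functions $g$ and $g_0$ we wish to extend. The only subtle point worth writing out explicitly is that the extension constant used outside $Q_0$ must be $g_I$ (resp.\ $(g_0)_I$) as in Lemma \ref{ekalisäys}, so that the ``$L^p/\mathbb{R}$ iff $L^p(I)$'' equivalence applies verbatim and gives the desired non-membership without any separate argument.
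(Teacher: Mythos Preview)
Your proposal is correct and matches the paper's own argument exactly: the paper does not give a separate proof of the corollary but simply states that it follows from Lemmas \ref{ekalisäys}, \ref{tokalisäys}, \ref{kolmaslisäys} together with Propositions \ref{jnpmuttaeivjnp} and \ref{vjnpmuttaeilp}, and your write-up is precisely the unpacking of that implication.
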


In conclusion we know that
\begin{equation*}
L^p / \mathbb{R}
\subsetneq CJN_p
\subseteq VJN_p
\subsetneq JN_p
\subsetneq L^{p,\infty} / \mathbb{R}.
\end{equation*}


So far an example function that would be in $VJN_p \setminus CJN_p$ has not been constructed.
It remains an open question whether these two spaces coincide or not.
It is also not clear whether the condition (\ref{auki}) is necessary for Lemma \ref{tokalisäys}. It would be interesting to find a $JN_p$ function that doesn't satisfy (\ref{auki}).
\\

\textbf{Acknowledgements.} I would like to thank my supervisor Riikka Korte
and my colleague Kim Myyryläinen
for many fruitful discussions on the space $JN_p$.
I would also like to thank the referee for their useful comments and suggestions especially concerning the readability of the paper.

This version of the article has been accepted for publication, after peer review, but is not the Version of Record and does not reflect post-acceptance improvements, or any corrections. The Version of Record is available online at: http://dx.doi.org/10.1007/s00209-022-03100-w.

\end{document}